\newcommand{\F}{\mathbb{F}}
\newcommand{\X}{\mathbb{X}}
\newcommand{\OO}{\mathbb{O}}
\newcommand{\lkhov}{\text{\textlbrackdbl}}
\newcommand{\rkhov}{\text{\textrbrackdbl}}
\renewcommand{\geq}{\geqslant}
\renewcommand{\leq}{\leqslant} 
\newtheorem{teo}{Theorem}[section]
\newtheorem*{teo*}{Theorem}
\newtheorem{lemma}[teo]{Lemma}
\newtheorem{prop}[teo]{Proposition}
\newtheorem*{prop*}{Proposition}
\newtheorem{cor}[teo]{Corollary}
\title{The concordance invariant tau in link grid homology}
\author{Alberto Cavallo}
\email{cavallo\_alberto@phd.ceu.edu}
\address{Alfr\'ed R\'enyi Institute of Mathematics\\
Budapest 1053\\
Hungary}
\begin{document}


\begin{abstract}
 We introduce a generalization of the Ozsv\'ath-Szab\'o $\tau$-invariant to links by studying a filtered 
 version of link grid homology.
 We prove that this invariant remains unchanged under strong concordance and we show that it produces a lower bound
 for the slice genus of a link. We show that this bound is sharp for torus links and we also
 give an application to Legendrian link invariants in the standard contact 3-sphere.
\end{abstract}

\maketitle

\section{Introduction}
Link Floer homology is an invariant for knots and links in three-manifolds, discovered
in 2003 by Ozsv\'ath and Szab\'o \cite{Ozsvath} and independently by Jacob Rasmussen \cite{Rasmussen}, in his PhD thesis.
It is the homology of a
chain complex whose generators are combinatorially defined, and whose differential
counts pseudo-holomorphic disks. 
Grid diagrams are
simple combinatorial presentation of links in $S^3$, dating back to the
$19^{th}$ century. A grid diagram is
an $l\times l$ grid of squares, $l$ of which are marked with an $O$ and 
$l$ of which are marked with an $X$. 
A projection of a link together with
an orientation on it can be associated to a grid diagram $D$.
These grids can be used to give a simpler reformulation of link Floer homology, called
\emph{grid homology}. Of course these two homologies are isomorphic, nevertheless grid homology can be easier to study.

In this paper we use the same notation of the book ``Grid homology for knots and links'' \cite{Book}. In this book
particular attention is given to two versions of the grid homology of a link $L$: the \emph{simply blocked grid homology} 
$\widehat{GH}(L)$ and the \emph{collapsed unblocked grid homology} $cGH^-(L)$; both these homology groups are invariant 
under link equivalence. We study a slightly different version of $\widehat{GH}(L)$. 
Let us denote with $\F$ the field with two elements;
we start constructing a filtered 
$\F$-complex $\left(\widehat{GC}(D),\widehat\partial\right)$
from a grid diagram $D$, equipped with an increasing $\Z$-filtration $\mathcal F$ and we 
prove that $\mathcal F$ induces a filtration in homology, leading to the filtered homology group $\widehat{\mathcal{GH}}(L)$.
The latter is not completely unrelated to $\widehat{GH}(L)$ as we see in Section \ref{section:homology}.  

We can extract a numerical invariant from the homology $\widehat{\mathcal{GH}}(L)$, the integer-valued function 
$T_L:\Z\times\Z\longrightarrow\Z_{\geq 0}$, with the following properties.
\begin{prop}
 \label{prop:one}
 \begin{enumerate}[i)]
  \item The function $T_L$ is supported in $\{1-n,...,0\}\times\Z$ and $\displaystyle\sum_{d,s\in\Z}T_L(d,s)=2^{n-1}$,
        where $n$ is the number of components of $L$.
  \item If $L^*$ is the mirror of the $n$-component link $L$ then 
        $$T_{L^*}(d,s)=T_L(-d+1-n,-s)\:\:\:\:\:\text{for any }d,s\in\Z\:.$$
  \item If $L_1\# L_2$ is a connected sum of $L_1$ and $L_2$ then $T_{L_1\# L_2}$ is the convolution product of
        $T_{L_1}$ and $T_{L_2}$.
  \item If $L$ is a quasi-alternating link then $T_L$ is determined by the signature of $L$.
 \end{enumerate}
\end{prop}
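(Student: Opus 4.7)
My plan is to deduce each of the four properties by combining the structural relation between $\widehat{\mathcal{GH}}(L)$ and $\widehat{GH}(L)$, established in Section \ref{section:homology}, with standard results on grid homology collected in \cite{Book}. Throughout I would work with a grid diagram $D$ realising $L$, and track how each operation (mirror, connected sum, unoriented skein) lifts to the filtered chain level before passing to homology.

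For (i), I would first recall that every generator of the simply blocked grid complex has Maslov grading in $\{1-n,\ldots,0\}$; since the filtration $\mathcal F$ is defined only on the second coordinate, the Maslov range is inherited verbatim by $\widehat{\mathcal{GH}}(L)$, giving the support bound on the $d$-variable. The total dimension identity $\sum_{d,s} T_L(d,s) = 2^{n-1}$ I would derive by identifying $\widehat{\mathcal{GH}}(L)$ with a distinguished rank-$2^{n-1}$ quotient of $\widehat{GH}(L)$ (the analogue of the surviving Lee generators in the knot case), using the comparison statement proved in Section \ref{section:homology}.

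For (ii), the mirror relation is essentially a duality statement. Interchanging the roles of the $X$ and $O$ markings in $D$ produces a grid diagram $D^{*}$ for $L^{*}$ whose filtered complex is canonically $\F$-dual to $\bigl(\widehat{GC}(D),\widehat\partial\bigr)$, with the bigrading transforming as $(d,s)\mapsto(-d+1-n,-s)$. Since we work over the field $\F$, dualisation commutes with taking homology, and the claim follows immediately. For (iii), I would argue by a Künneth-type formula: from grid diagrams $D_1$ and $D_2$ one assembles a grid diagram for $L_1\# L_2$ whose filtered complex is filtered quasi-isomorphic to $\widehat{GC}(D_1)\otimes_{\F}\widehat{GC}(D_2)$ with additive bigrading. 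Dimensions on a tensor product then multiply bigrading by bigrading, which is exactly the convolution of $T_{L_1}$ and $T_{L_2}$.

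Part (iv) is the one I expect to be the main obstacle. The intended approach is to show by induction on the determinant of $L$ that $\widehat{\mathcal{GH}}(L)$ is \emph{thin}, i.e.\ concentrated along a single anti-diagonal $s-d=c(L)$, and to identify the constant $c(L)$ with a multiple of $\sigma(L)$. The inductive step relies on an unoriented skein exact triangle for $\widehat{\mathcal{GH}}$ together with the classical behaviour of the signature under quasi-alternating resolutions; the base case is the unknot, where $T$ is explicit. The hard part will be establishing the skein triangle in the filtered setting and, more delicately, verifying that the bigrading shifts between the three terms are compatible with thinness, so that both the diagonal support and the precise dependence on $\sigma(L)$ are propagated through the induction.
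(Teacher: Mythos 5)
Your plan for (i) rests on a false premise: it is not true that every generator of $\widehat{GC}(D)$ has Maslov grading in $\{1-n,\dots,0\}$. The grid states of a size-$N$ grid have Maslov gradings spread over a large range, and the concentration of the \emph{homology} in degrees $\{1-n,\dots,0\}$ is a genuine cancellation phenomenon, not inherited from the chain level. The paper's argument is different and much more direct: the Maslov grading $M(x)$ and the differential $\widehat\partial$ depend only on the $O$-markings (the $X$'s enter only through the filtration), so replacing the $X$'s to produce a diagram of the $n$-component unlink $\bigcirc_n$ leaves the Maslov-graded homology unchanged. Since $\widehat{\mathcal{GH}}(\bigcirc_n)\cong V^{\otimes(n-1)}\cong\F^{2^{n-1}}$, supported in degrees $\{1-n,\dots,0\}$, both claims of (i) follow at once. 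The ``rank-$2^{n-1}$ quotient of $\widehat{GH}(L)$'' you invoke is not the mechanism used in the paper and would require additional justification (in general $\widehat{\mathcal{GH}}$ is only a subquotient of $\widehat{GH}$ via the spectral sequence, not a canonically distinguished quotient).

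For (iii) you propose a Künneth argument at the grid-diagram level, but the paper explicitly states that this is ``very hard to prove in the grid diagram settings'' and instead appeals to Ozsváth and Szabó's holomorphic proof of the filtered homotopy equivalence $\widehat{CFL}(D_1)\otimes\widehat{CFL}(D_2)\to\widehat{CFL}(D_1\#D_2)$. You would need to either reproduce that holomorphic argument or actually construct the filtered quasi-isomorphism for grid complexes, which is a substantial gap in your sketch.

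For (iv) you are proposing far more work than is needed, and the extra work is itself nontrivial: establishing an unoriented skein exact triangle for the filtered theory and controlling its grading shifts. The paper avoids this entirely. It simply invokes the thinness of the \emph{associated graded} homology $\widehat{GH}(L)$ for quasi-alternating links (Theorem 10.3.3 in \cite{Book}, already proved by the skein/determinant induction you have in mind) and then observes that $T_L(d,s)\neq0$ forces $H_{d,s}\bigl(\mathrm{gr}(\widehat{GC}(D))\bigr)\neq0$ because the filtered homology is a subquotient of the $E^1$ page of the filtration spectral sequence. Thinness of $\widehat{GH}(L)$ then immediately constrains the support of $T_L$ to the diagonal $s=d+\tfrac{n-1-\sigma(L)}{2}$. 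You should look for this spectral-sequence comparison rather than re-proving thinness in the filtered setting.

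Your sketch of (ii) is in the right spirit (duality), though the paper's execution is more careful: the duality is first established for the quotient complex $\widetilde{GC}(D)=\widehat{GC}(D)/(V_1=\cdots=V_m=0)$ via reflection of the grid, and one then passes between $\widetilde{GC}$ and $\widehat{GC}$ using the $W^{\otimes(\mathrm{grd}(D)-n)}$ stabilisation (Lemma 14.1.11 in \cite{Book}); simply asserting that swapping $X$ and $O$ markings dualises $\widehat{GC}(D)$ itself glosses over this step.
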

Moreover, in Section \ref{section:cobordism} we prove the following theorem,
which is similar to what Pardon
proved in \cite{Pardon} for Lee homology.
We say that a strong cobordism is a cobordism $\Sigma$, between two links $L_1$ and $L_2$, such that every 
connected component of 
$\Sigma$ is a knot cobordism between a component of $L_1$ and one of $L_2$; in particular $L_1$ and $L_2$ have the same
number of components. If $g(\Sigma)=0$ then $\Sigma$ is a strong concordance.
\begin{teo}
 \label{teo:one}
 The function $T$ is a strong concordance invariant. In other words, if $L_1$ and $L_2$ are
 strongly concordant then
 $T_{L_1}(d,s)=T_{L_2}(d,s)$ for every $d,s\in\Z$.
\end{teo}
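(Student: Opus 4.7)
The plan is to adapt to the link grid setting the approach used by Pardon in \cite{Pardon} for Lee homology, by decomposing the strong concordance into elementary pieces and tracking how the filtered bigraded homology changes under each. A standard movie presentation realises any strong concordance from $L_1$ to $L_2$ as a composition of grid moves (commutations, stabilisations and destabilisations), births and deaths of small unknotted circles, and saddle moves; the assumption that the cobordism has genus zero and preserves the number of components forces the number of saddles to equal the sum of births and deaths, with the saddles coming in merge--split pairs compatible with the component labelling.

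First I would verify that each grid move induces a filtered quasi-isomorphism preserving the bigrading in which $T_L$ lives, so that $T_L$ is unchanged under grid moves; this should follow from a filtered refinement of the invariance arguments of \cite{Book}. Second, for each of the remaining elementary cobordism moves I would construct explicit filtered chain maps between the corresponding grid complexes, in the spirit of the cobordism maps already developed in \cite{Book}, and compute their Maslov-grading and Alexander-filtration shifts in terms of the Euler characteristic of the local cobordism piece. Third, using Proposition \ref{prop:one}, in particular the identity $\sum_{d,s} T_L(d,s) = 2^{n-1}$, I would show that on the part of $\widehat{\mathcal{GH}}$ supporting $T_L$ the composite of all these maps is a bigrading-preserving isomorphism, which yields the claimed equality $T_{L_1}(d,s) = T_{L_2}(d,s)$.

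The main obstacle is controlling the behaviour of the filtration under a single saddle: a generic saddle map may shift both the Maslov grading and the Alexander filtration, and the argument must show that across the whole concordance these shifts cancel. Concretely, I expect one has to identify, in analogy with Pardon's Lee-homology argument, a distinguished set of surviving cycles indexed by the $2^{n-1}$ equivalence classes of orientations of $L$, and then check that saddle and (de)stabilisation maps permute these cycles in a bigrading-preserving way whenever the underlying local cobordism contributes trivially to the total Euler characteristic. Once this is established, the strong-concordance hypothesis guarantees that the induced map between filtered homologies preserves both the Maslov grading and the Alexander filtration, forcing $T_{L_1}=T_{L_2}$.
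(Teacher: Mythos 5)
Your high-level strategy matches the paper's: realise the strong concordance as a composition of elementary cobordisms, show that grid moves induce filtered quasi-isomorphisms, and show the remaining pieces induce maps with controlled filtration shift, so that the total map is a filtered isomorphism and hence $T_{L_1}=T_{L_2}$. The paper indeed reduces Theorem~\ref{teo:one} to Theorem~\ref{teo:strong}, whose proof invokes the normal form of Proposition B.5.1 of \cite{Book}, decomposing $\Sigma$ into birth, torus, death and identity cobordisms; for a genus-zero strong cobordism the torus pieces are absent, each of the remaining pieces is a filtered isomorphism, and the dimensions $\dim\mathcal F^s\widehat{\mathcal{GH}}_d$ agree level by level.

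Where you diverge, and where there is a genuine gap, is in your proposed key mechanism. You want to mimic Pardon's Lee-homology argument by identifying a distinguished set of $2^{n-1}$ surviving cycles ``indexed by orientation classes of $L$'' and checking that the cobordism maps permute them. That structure is special to Lee homology, where the Lee deformation produces canonical generators labelled by orientations; grid homology has no analogous canonical orientation-indexed basis for $\widehat{\mathcal{GH}}(L)$ or for the torsion-free quotient of $c\mathcal{GH}^-$, and the $\tau$-set does not come with such a labelling. The paper fills this role differently: it constructs explicit filtered chain maps for each elementary piece (the snail-like-domain map $s$ of \cite{Manolescu1} with Sarkar's filtered refinement \cite{Sarkar} for the birth, the identity map on grid states for band moves, and the dual map for the death), and shows each one is a filtered quasi-isomorphism of degree $0$, so by Proposition A.6.1 of \cite{Book} it is a filtered isomorphism in homology --- no privileged cycles needed. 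Your appeal to $\sum_{d,s}T_L(d,s)=2^{n-1}$ is closer to something the paper does implicitly (the total dimension of $\widehat{\mathcal{GH}}(L)$ is independent of $L$ for fixed $n$), but on its own it only gives an isomorphism, not a filtered isomorphism; the paper gets the stronger conclusion from the chain-level quasi-isomorphisms.

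There is also a smaller structural slip in your decomposition: in a strong concordance the saddles do not come in merge--split pairs. Each connected component of $\Sigma$ is an annulus, so after rearranging by Proposition B.5.1 each saddle pairs with a $0$-handle or a $2$-handle, giving birth--merge and split--death pieces; merge--split (torus) pairs only appear when the genus is positive, and precisely contribute the filtered degree shift $g(\Sigma)$ in the general bound of Theorem~\ref{teo:strong}. Fixing this pairing is what makes the filtration bookkeeping close up for genus zero, and is the concrete replacement for the orientation-tracking step you proposed.
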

In Section \ref{section:invariant} we show that $T_L(0,s)$ is non-zero only for one value of $s$. We call this integer
$\tau(L)$, and, as the name suggests, it
coincides with the classical $\tau$ for knots defined in \cite{Ozsvath}. 
More precisely, we prove the following statement.
\begin{teo}
 \label{teo:set}
 For an $n$-component link the $\tau$-set, defined in \cite{Book} as -1 times the Alexander gradings of a homogeneous,
 free generating set of the torsion-free quotient of $cGH^-(L)$ as an $\F[U]$-module, coincides with
 the $2^{n-1}$ (with multiplicity) values of $s$ 
 where the function $T$ is supported.
\end{teo}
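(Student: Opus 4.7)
The plan is to identify the multiset of $s$-values in the support of $T_L$ with the $\tau$-set of $L$ by leveraging the $\F[U]$-module structure of $cGH^-(L)$. The $\tau$-set consists, by definition, of the $2^{n-1}$ Alexander gradings (with signs flipped) of a free homogeneous generating set of the torsion-free quotient of $cGH^-(L)$; by the structure theorem for finitely generated graded $\F[U]$-modules recalled in the book, such a generating set exists and is unique up to permutation of the summands. Since the multiset of $s$-values where $T_L$ is supported also has total mass $2^{n-1}$ by Proposition~\ref{prop:one}, it suffices to exhibit an equality of these two multisets.

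The bridge I would use is the short exact sequence of filtered chain complexes
\begin{equation*}
 0 \longrightarrow cGC^-(D) \xrightarrow{\:U\:} cGC^-(D) \longrightarrow \widehat{GC}(D) \longrightarrow 0,
\end{equation*}
which is compatible with the Alexander filtration, together with the induced long exact sequence in filtered homology. This sequence exhibits $\widehat{\mathcal{GH}}(L)$ as an extension whose ``top'' contribution is controlled by $\ker U$ and whose ``bottom'' by $\mathrm{coker}\,U$ on $cGH^-(L)$. Each free summand $\F[U]_{(d_i,s_i)}$ of the torsion-free quotient contributes a single permanent class to $\widehat{\mathcal{GH}}(L)$, which in turn is counted by $T_L$ at an Alexander filtration level determined by $s_i$; after tracking the sign convention that defines the $\tau$-set as $-1$ times the Alexander gradings, this gives the desired contribution.

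The core of the argument is to show that the $U$-torsion part of $cGH^-(L)$ makes zero net contribution to $T_L$: the classes coming from each torsion tower $\F[U]/U^k$ should cancel in pairs along the filtration, being born and killed within a bounded filtration window, so that nothing of them survives on the $E_\infty$-page of the spectral sequence computing $T_L$. I expect this cancellation to be the main technical obstacle, requiring explicit chain-level control of the connecting homomorphism, or equivalently the introduction of an auxiliary spectral sequence argument in the spirit of the knot case in \cite{Book}. Once the torsion cancellation is in hand, the rest of the proof is bookkeeping about Maslov and Alexander gradings under the quotient $U\mapsto 0$, yielding the stated equality of multisets.
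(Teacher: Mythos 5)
Your proposal has a genuine gap at exactly the point you flag as ``the main technical obstacle.'' The short exact sequence
$0 \to cGC^-(D) \xrightarrow{V} cGC^-(D) \to \widehat{GC}(D) \to 0$
is valid as a sequence of filtered complexes, but the resulting long exact sequence relates the \emph{filtered} homologies $c\mathcal{GH}^-(L)$ and $\widehat{\mathcal{GH}}(L)$, whereas the $\tau$-set is defined using the \emph{bigraded} homology $cGH^-(L)$ (the homology of the associated graded object). These are different modules: $c\mathcal{GH}^-(L)$ is a free $\F[U]$-module of rank $2^{n-1}$, so it has no $U$-torsion, and the torsion towers you want to cancel live only in the bigraded $cGH^-(L)$. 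Your plan never explains how to transport information from the bigraded side (where the torsion-free quotient is taken) into the filtered side (where $T_L$ lives); saying the torsion should ``cancel in pairs'' is the statement to be proved, not a tool. Without a concrete handle on the interaction between the $U$-power filtration and the Alexander filtration, the long exact sequence alone does not yield the equality of multisets.

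The paper resolves this by an entirely different mechanism. Working with $\mathcal{C} = cGC^-(D)/(V_1 = \cdots = V_m = U)$, it forms the localization $\mathcal{C}' = \mathcal{C} \otimes_{\F[U]} \F[U,U^{-1}]$ with a $\Z\oplus\Z$-filtration: one direction given by $U$-powers, the other by Alexander grading. Lemma 14.1.9 of \cite{Book} gives a symmetry exchanging the two filtration directions, and the maps $U^t$ translate subquotients of one filtration into subquotients of the other. This identifies the set of $s$ where the inclusion $\mathcal{F}^{s-1} \hookrightarrow \mathcal{F}^s$ fails to be onto in filtered homology with the set of $-s$ labelling generators of the torsion-free quotient of the associated graded homology, directly and without invoking a long exact sequence or a torsion-cancellation argument. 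One then undoes the quotient by the $V_i$'s using the quasi-isomorphism $\mathcal{C} \cong cGC^-(D) \otimes W^{\otimes m}$. If you want to salvage your approach, you would need to prove a comparison between the $E_\infty$-page of the Alexander spectral sequence and the torsion-free quotient of $cGH^-(L)$; this is in effect what the bifiltered duality accomplishes in a self-contained way, and it is not a formality.
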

For a knot $K$, where the $\tau$-set is just $\tau(K)$, we have that $T_K(d,s)$ is non-zero only for $(d,s)=(0,\tau(K))$.

From Theorem \ref{teo:one} we know 
that $\tau(L)$ is a strong concordance invariant. Furthermore, it
gives a lower bound for the slice genus $g_4(L)$, that is the 
minimum genus of a compact, oriented, smoothly and properly embedded surface in $D^4$ with $L$ as boundary. 
\begin{prop}
 \label{prop:two}
 For every $n$-component link $L$ we have
 \begin{equation}
  \label{bound}
  |\tau(L)|+1-n\leq g_4(L)\:.
 \end{equation}
\end{prop}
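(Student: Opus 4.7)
The strategy is to deduce the inequality from a cobordism refinement of Theorem~\ref{teo:one}, combined with a computation of $\tau$ for a reference link. Specifically, I would first prove the following: whenever $\Sigma \subset S^3 \times [0,1]$ is a strong cobordism of genus $g(\Sigma)$ from $L_1$ to $L_2$,
\[
|\tau(L_1) - \tau(L_2)| \leq g(\Sigma).
\]
This generalises Theorem~\ref{teo:one}, whose statement corresponds to the case $g(\Sigma) = 0$. The proof would revisit the cobordism-induced chain maps on $\widehat{GC}(D)$ from Section~\ref{section:cobordism}: each handle in a Morse decomposition of $\Sigma$ that is not already accounted for in the strong-concordance case should contribute at most one unit of Alexander-grading shift to a filtered cycle representing $T_{L_1}(0, \tau(L_1))$.

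Next, I would use the slice surface to manufacture a strong cobordism between $L$ and a reference link whose $\tau$ is known. Given a connected slice surface $\Sigma \subset D^4$ with $\partial \Sigma = L$ and $g(\Sigma) = g_*(L)$, pick an $n$-component unlink $U^n \subset S^3$ placed disjointly from $L$, bounded by $n$ disjoint slice disks $D_1, \ldots, D_n \subset D^4$. After cutting $\Sigma$ along $n-1$ arcs joining distinct boundary components and gluing each resulting piece to a distinct $D_i$ via a trivial tube, Euler-characteristic bookkeeping gives a strong cobordism $\Sigma'$ from $L$ to $U^n$ of total genus $g_*(L) + n - 1$. Applying the cobordism inequality to $\Sigma'$ yields
\[
|\tau(L) - \tau(U^n)| \leq g_*(L) + n - 1.
\]

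The final step is to compute $\tau(U^n) = 0$. The filtered complex $\widehat{GC}(D)$ for a standard grid diagram of the $n$-component unlink is, up to filtered chain homotopy equivalence, a tensor product of $n$ copies of the unknot complex with standard grading shifts; the unique Maslov-degree-zero class of the resulting filtered homology sits at Alexander grading zero, so $T_{U^n}(0, 0) \neq 0$ and hence $\tau(U^n) = 0$. Substituting into the preceding inequality gives $|\tau(L)| \leq g_*(L) + n - 1$, which is exactly \eqref{bound}.

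The main obstacle is the quantitative cobordism inequality refining Theorem~\ref{teo:one}. The genus-zero case is asserted to follow from a strong-concordance chain homotopy equivalence in Section~\ref{section:cobordism}, but extending this to a filtration estimate for positive-genus strong cobordisms requires a careful analysis of how births, saddles, and deaths in a Morse decomposition of $\Sigma'$ shift the Alexander grading of a cycle representing $T_L(0, \tau(L))$, and confirming that the total shift is bounded by the genus.
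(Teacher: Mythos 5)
Your Step 1 is exactly Theorem \ref{teo:strong} in the paper, proved via Lemma \ref{lemma:normal} (normalizing the handle decomposition into merges, tori, and splits) and Proposition \ref{prop:torus} (a torus cobordism induces an isomorphism of filtered degree $1$), so the ingredient you flag as the ``main obstacle'' is already in place. The genuine gap is in Step 2, the construction of a strong cobordism from $L$ to $U^n$ of genus $g_*(L)+n-1$. A connected minimal-genus slice surface $\Sigma$, punctured near $\partial\Sigma$, does give a cobordism from $L$ to $U^n$, but that cobordism is \emph{connected}, hence not strong, and Theorem \ref{teo:strong} does not apply to it. Your proposed repair --- cutting $\Sigma$ along $n-1$ arcs joining distinct boundary components and tubing the pieces to slice disks --- does not disconnect $\Sigma$: cutting along an arc that joins two different boundary circles merges those circles into one boundary circle without separating the surface, so after $n-1$ such cuts you obtain a single connected surface with one boundary circle, not $n$ disjoint pieces. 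The slice surface does not hand you a strong cobordism, and without one your argument stalls.

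The paper circumvents this by proving the strictly stronger Proposition \ref{prop:bound}: for an \emph{arbitrary} cobordism $\Sigma$ between $L_1$ and $L_2$,
$\big|\tau(L_1)-\tau(L_2)\big|\leq g(\Sigma)+\max\{l_1(\Sigma),l_2(\Sigma)\}$,
where $l_i(\Sigma)=n_i-J$ measures the failure of $\Sigma$ to be strong. The extra term is exactly what accounts for the unpaired merges and splits that a non-strong cobordism contains; this requires the filtration-degree analysis of $\Phi_{\text{Split}}$ (degree $1$) and $\Phi_{\text{Merge}}$ (degree $0$) in Subsection \ref{subsection:shift} together with Lemma \ref{lemma:normal}, none of which feature in your plan. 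Once Proposition \ref{prop:bound} is available, one punctures the slice surface at a single point to obtain a connected cobordism from $L$ to $\bigcirc$ with $J=1$, $l_1=n-1$, $l_2=0$, genus $g_*(L)$, and $\tau(\bigcirc)=0$ gives $|\tau(L)|\leq g_*(L)+n-1$ immediately. (Your Step 3, $\tau(U^n)=0$, is correct but not needed once one targets the unknot rather than the unlink.)
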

 We use this lower bound to give another proof that, for the positive torus link $T_{q,p}$, we have 
\begin{equation}
 \label{torus_slice}
g_4(T_{q,p})=\frac{(p-1)(q-1)+1-\text{gcd}(q,p)}{2}\:\:\:\:\:\text{for any }q\leq p\:.
\end{equation}
This was already proved by the author in \cite{Cavallo} using the Rasmussen $s$-invariant. 

Finally, we use $\tau(L)$ to prove a generalization of the Thurston-Bennequin number upper bound, given by Olga Plamenevskaya 
in \cite{Plamenevskaya}, to $n$-component Legendrian links.
A brief introduction on Legendrian knots and links can be found in \cite{Geiges}.
\begin{prop}
 \label{prop:three}
 Consider a Legendrian $n$-component link $\mathcal L$ of link type $L$ in $S^3$ equipped with the standard contact structure.
 Then the following inequality holds:
 \begin{equation}
  \label{tb_bound}
  \text{tb}(\mathcal L)+|\text{rot}(\mathcal L)|\leq2\tau(L)-n\:.
 \end{equation}
\end{prop}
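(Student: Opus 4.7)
The plan is to mimic Plamenevskaya's knot argument \cite{Plamenevskaya} inside the filtered grid complex. Given $\mathcal L$, the first step is to represent it by a grid diagram $D$ (obtained from a front projection in the standard way), and consider the two distinguished Legendrian generators $x^{+}(D), x^{-}(D)\in\widehat{GC}(D)$, placed respectively at the NE and SW corners of the $X$-marked squares. A standard case analysis on the empty rectangles counted by $\widehat\partial$ shows $\widehat\partial x^{\pm}=0$, so both are cycles.

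The second step is to compute the bigradings of $x^{\pm}$ using the formulas of Chapter~12 of \cite{Book}. Both $x^{+}$ and $x^{-}$ lie in the top Maslov grading $d=0$ of $\widehat{GC}(D)$, while their Alexander gradings work out to
\begin{equation*}
A\bigl(x^{+}(D)\bigr)=\tfrac{1}{2}\bigl(\text{tb}(\mathcal L)+\text{rot}(\mathcal L)+n\bigr),\qquad A\bigl(x^{-}(D)\bigr)=\tfrac{1}{2}\bigl(\text{tb}(\mathcal L)-\text{rot}(\mathcal L)+n\bigr).
\end{equation*}
One then checks that $[x^{+}]$ and $[x^{-}]$ depend only on the Legendrian isotopy class of $\mathcal L$, and represent non-zero classes in the filtered homology $\widehat{\mathcal{GH}}(L)$, by tracking the effect of the Legendrian grid moves (commutations together with the non-Legendrian stabilisations) inside the filtered complex $\widehat{GC}$.

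The final step is to combine this with the definition of $\tau(L)$. Since by Theorem~\ref{teo:set} the function $T_{L}(0,\cdot)$ is supported at the single Alexander grading $\tau(L)$, every cycle in Maslov grading $0$ representing a non-zero class in $\widehat{\mathcal{GH}}(L)$ has Alexander filtration level at most $\tau(L)$; applied to $x^{+}$ and $x^{-}$ this yields
\begin{equation*}
\tfrac{1}{2}\bigl(\text{tb}(\mathcal L)+\text{rot}(\mathcal L)+n\bigr)\leq\tau(L)\qquad\text{and}\qquad\tfrac{1}{2}\bigl(\text{tb}(\mathcal L)-\text{rot}(\mathcal L)+n\bigr)\leq\tau(L),
\end{equation*}
and taking the larger of the two (according to the sign of $\text{rot}(\mathcal L)$) gives exactly \eqref{tb_bound}. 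The main obstacle is the non-vanishing step: establishing that $[x^{+}]$ and $[x^{-}]$ survive as non-trivial classes in the filtered homology $\widehat{\mathcal{GH}}(L)$, rather than merely in the bigraded associated graded $\widehat{GH}(L)$, requires a careful filtered analysis of the Legendrian grid moves. Once this is in place, the grading computation of step two together with the definition of $\tau(L)$ mechanically yield the desired bound.
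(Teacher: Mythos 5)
Your overall plan—place the Legendrian generators $x^{\pm}$ in a grid complex, compute their gradings, show they are nonzero, and compare with $\tau$—is in the right spirit, but two claims in step two are incorrect and they undercut the final step.

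First, the Maslov grading of $x^{\pm}$ is not $0$. The standard formulas (Chapter~12 of \cite{Book}, also quoted in the paper) give
\begin{equation*}
M(x^{+})=\text{tb}(\mathcal L)-\text{rot}(\mathcal L)+1,\qquad M(x^{-})=\text{tb}(\mathcal L)+\text{rot}(\mathcal L)+1,
\end{equation*}
which vanish only for rather special Legendrians. Indeed, if both $x^{+}$ and $x^{-}$ lay in Maslov degree $0$, their Alexander gradings would have to agree since $\widehat{\mathcal{GH}}_0(L)$ is one-dimensional, forcing $\text{rot}(\mathcal L)=0$; this is absurd. As a result, your last step—``every cycle in Maslov grading $0$ representing a non-zero class has Alexander level at most $\tau(L)$''—simply does not apply to $x^{\pm}$. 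What \emph{is} true is the relation $M(x^{\pm})=2A(x^{\pm})+1-n$, placing $x^{\pm}$ on a specific line in the $(M,A)$-plane, and it is exactly this relation that the paper exploits via Corollary~\ref{cor:twice}, whose content is that the unique free generator of $cGH^-$ on the line $M=2A+1-n$ sits in Alexander grading $-\tau^*$. So the correct constraint to extract from the non-torsion-ness of $[x^{\pm}]$ is $A(x^{\pm})\leq-\tau^*(L^*)$, not a bound by $\tau$ at Maslov grading $0$.

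Second, you do not track the mirror. The grid diagram $D$ built from a front projection of $\mathcal L$ (of link type $L$) represents $L^*$, so $x^{\pm}$ live in the complex of $L^*$, not $L$; to land on $\tau(L)$ one then needs $\tau^*(L^*)=-\tau(L)$ (Equation~\eqref{tau_bar}). Finally, while you correctly flag non-vanishing as the delicate point, the paper avoids any ``filtered analysis of Legendrian grid moves'' in $\widehat{\mathcal{GH}}$: it instead quotes the known non-torsion-ness of the Legendrian grid invariants $\lambda^{\pm}(\mathcal L)$ in $cGH^-(L^*)$ (Theorems 12.3.2 and 12.7.5 in \cite{Book}), and then channels that through Theorem~\ref{teo:set}/Corollary~\ref{cor:twice} rather than through a direct argument in the hat-theory at Maslov grading $0$. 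In short: fix the gradings, work in $cGH^-$ of the mirror, and replace your last step with Corollary~\ref{cor:twice} plus the relation $\tau^*(L^*)=-\tau(L)$.
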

Equation \eqref{tb_bound} gives a lower bound for $\tau$ and, using Equation \eqref{bound}, also the following lower 
bound for the slice genus of $L$:
\begin{equation}
 \label{tb_slice}
 \text{tb}(\mathcal L)+|\text{rot}(\mathcal L)|\leq2g_4(L)+n-2\:,
\end{equation}
generalizing a result of Rudolph \cite{Rudolph}
for knots. In Section \ref{section:application} we give an example where this bound is 
sharp. Moreover,
Equation \eqref{tb_bound} can also give an upper bound for $\text{TB}(L)$, the maximal Thurston-Bennequin number of 
a link $L$.
\begin{prop}
 \label{prop:max}
 For every $n$-component link $L$ we have $$\text{TB}(L)\leq 2\tau(L)-n\:.$$
\end{prop}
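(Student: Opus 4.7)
The plan is to deduce Proposition \ref{prop:max} as an immediate corollary of Proposition \ref{prop:three}. The maximal Thurston-Bennequin number $\text{TB}(L)$ is defined as the supremum of $\text{tb}(\mathcal{L})$ over all Legendrian representatives $\mathcal{L}$ of the topological link type $L$, so the strategy is to bound each such $\text{tb}(\mathcal{L})$ uniformly by $2\tau(L)-n$ and pass to the supremum.

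First I would fix an arbitrary Legendrian $n$-component link $\mathcal{L}$ with underlying link type $L$ in $(S^3,\xi_{\text{std}})$. By Proposition \ref{prop:three} we have
\[
\text{tb}(\mathcal{L}) + |\text{rot}(\mathcal{L})| \leq 2\tau(L) - n.
\]
Since $|\text{rot}(\mathcal{L})| \geq 0$, this immediately yields $\text{tb}(\mathcal{L}) \leq 2\tau(L) - n$. As $\mathcal{L}$ was arbitrary, taking the supremum over all Legendrian representatives of $L$ gives the desired inequality
\[
\text{TB}(L) \leq 2\tau(L) - n.
\]
The bound also guarantees that the supremum is finite, so $\text{TB}(L)$ is well-defined as an integer.

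There is no real obstacle here: the entire content lies in Proposition \ref{prop:three}, and Proposition \ref{prop:max} is a one-line consequence obtained by discarding the non-negative rotation-number term and then maximising over Legendrian representatives. The only thing worth pointing out is the conceptual one, namely that $\text{TB}(L)$ is recovered purely from the $\tau$-invariant of the underlying smooth link, with no dependence on the particular Legendrian representative achieving (or approaching) the maximum.
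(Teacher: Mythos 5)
Your proof is correct and follows exactly the route the paper takes: in Section~\ref{section:application} the paper obtains Proposition~\ref{prop:max} as an immediate consequence of Equation~\eqref{tb_bound} by dropping the non-negative term $|\text{rot}(\mathcal L)|$ and taking the supremum over Legendrian representatives.
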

In particular for a quasi-alternating link, since from iv) in Proposition \ref{prop:one} the invariant
$\tau$ is determined by the 
signature,
we have the following result that Plamenevskaya proved for alternating knots in \cite{Plamenevskaya}.
\begin{cor}
 \label{cor:tb}
 If $L$ is a quasi-alternating link then we have that $$\text{TB}(L)\leq-1-\sigma(L)\:.$$
\end{cor}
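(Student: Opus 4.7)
The plan is to derive the corollary by chaining Proposition \ref{prop:max} with Proposition \ref{prop:one}(iv). Proposition \ref{prop:max} gives the unconditional bound $\text{TB}(L)\leq 2\tau(L)-n$ for every $n$ component link, so the entire content of the corollary is the computation of $\tau(L)$ for a quasi-alternating link in terms of the signature, namely
\begin{equation*}
\tau(L)=\frac{n-1-\sigma(L)}{2}\:.
\end{equation*}
Indeed, granting this formula, substituting gives $\text{TB}(L)\leq (n-1-\sigma(L))-n=-1-\sigma(L)$, which is exactly the statement of the corollary.

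First I would reduce the problem to the above numerical identity. By Theorem \ref{teo:set} the integer $\tau(L)$ is the unique $s$ for which $T_L(0,s)\neq 0$, so once $T_L$ is known as a function of $\sigma(L)$ the value of $\tau(L)$ is determined. Proposition \ref{prop:one}(iv) guarantees that $T_L$ for a quasi-alternating $L$ depends only on $\sigma(L)$, so it suffices to fix the affine relationship between $\tau$ and $\sigma$ on a family of quasi-alternating links covering all possible pairs $(n,\sigma)$.

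Next I would verify the formula on enough model examples to pin down the constants. Knots give the base case: for a quasi-alternating knot the Heegaard Floer and grid homologies are $\sigma$-thin, so $\tau(K)=-\sigma(K)/2$, which matches the claimed identity with $n=1$. For $n\geq 2$ the natural test family is the set of connected sums of Hopf links and unknotted components; using Proposition \ref{prop:one}(iii) the convolution product of $T$-functions reduces the problem to computing $\tau$ for the positive and negative Hopf links, and for these the $\tau$-set can be read off directly from Theorem \ref{teo:set} and compared against the known signatures. This fixes both the $n$-dependence and the $\sigma$-dependence.

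The main obstacle is justifying the formula $\tau(L)=(n-1-\sigma(L))/2$ uniformly on the whole class of quasi-alternating links rather than just on model examples. To do this I would set up an induction on the determinant of $L$ along the defining skein triple $(L,L_0,L_1)$ for quasi-alternatingness: Proposition \ref{prop:one}(iv) ensures that $T_L$ is governed by a universal recursion in the signature inherited from the skein triple, and since $\sigma$ itself satisfies a controlled variation along such skein moves, one can match the recursions to conclude that the affine formula propagates from the base cases above to every quasi-alternating link. Once this is established, the desired bound follows by a one-line substitution into Proposition \ref{prop:max}.
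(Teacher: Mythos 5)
Your final substitution is correct and is the same reduction the paper uses: granted $\tau(L)=\frac{n-1-\sigma(L)}{2}$, Proposition~\ref{prop:max} gives $\text{TB}(L)\leq 2\tau(L)-n=(n-1-\sigma(L))-n=-1-\sigma(L)$. The divergence is in how you propose to establish the $\tau$-formula, and there your plan has genuine gaps. In the paper this identity is already available as Corollary~\ref{cor:quasi}, which follows from Theorem~\ref{teo:quasi}: the function $T_L$ for a quasi-alternating $L$ is supported on the line $s=d+\frac{n-1-\sigma(L)}{2}$, $1-n\leq d\leq 0$. That theorem is proved not by a skein argument internal to this paper but by citing the thinness of $\widehat{GH}(L)$ for quasi-alternating links (Theorem~10.3.3 in \cite{Book}) and observing that $T_L(d,s)\neq 0$ forces $H_{d,s}(\text{gr}(\widehat{GC}(D)))\neq 0$, so the support of $T_L$ lies on the same line. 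Setting $d=0$ gives the $\tau$-formula.

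Your alternative route is problematic in two ways. First, deducing the exact value of $\tau(L)$ from the statement ``$T_L$ is determined by $\sigma(L)$'' plus a few examples requires knowing the functional form of the dependence; you implicitly assume it is affine in $(n,\sigma)$, but Proposition~\ref{prop:one}(iv) as stated does not give this, and the affine form is precisely what Theorem~\ref{teo:quasi} supplies by locating the support on a line. Second, the claimed ``universal recursion in the signature inherited from the skein triple'' is not a consequence of Proposition~\ref{prop:one}(iv); that item is itself a corollary of the thinness theorem, not an independent skein mechanism one can iterate. An honest induction on determinant along the quasi-alternating skein triple would amount to re-proving the thinness result, which is substantial and is exactly what the paper avoids by invoking Theorem~10.3.3 of \cite{Book}. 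The correct short proof is: cite Corollary~\ref{cor:quasi} for $\tau(L)=\frac{n-1-\sigma(L)}{2}$ and substitute into Proposition~\ref{prop:max}.
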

As we show in Section \ref{section:application}, the upper bound in Corollary \ref{cor:tb} gives the following proposition.
\begin{figure}[ht]
        \centering
        \def\svgwidth{10cm}
        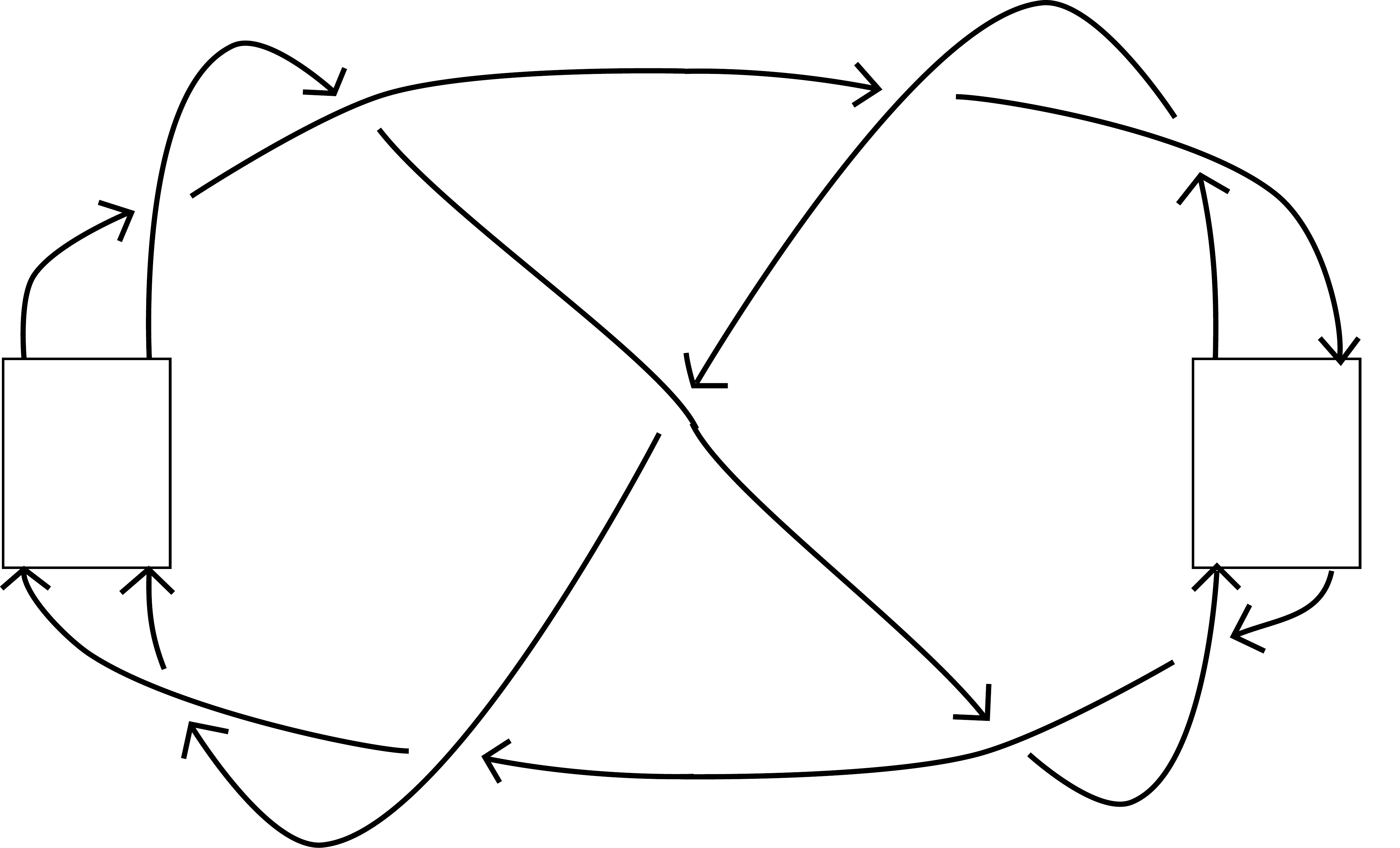   
        \caption{A diagram of $L^k$. For $k=0$ we have the link $L9^a_{40}$}
        \label{L9a40}
\end{figure}
\begin{prop}
 \label{prop:last}
 The links $L^k$ in Figure \ref{L9a40} are a family of two component links, whose components $L^k_i$ are unknots 
 with linking number zero, such that
$\text{TB}(L^k)$ is arbitrarily small.
\end{prop}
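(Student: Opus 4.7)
My plan is to reduce the claim to a $\tau$-computation. By Proposition \ref{prop:max}, $\text{TB}(L^k)\leq 2\tau(L^k)-2$ since every $L^k$ has two components, so it suffices to prove that $\tau(L^k)\to-\infty$ as $k\to\infty$.

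First, I would verify directly from Figure \ref{L9a40} that each $L^k$ is a two-component link whose components are both unknotted and satisfy $\text{lk}(L^k_1,L^k_2)=0$. The $\pm2k$ twist boxes should each act on two parallel strands that belong to a single component, so that the boxes only introduce self-crossings inside one component at a time, preserving the unknotted type; and by placement in the diagram the algebraic crossings between distinct components are unchanged, leaving linking number zero for every $k$.

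Next I would compute $\tau(L^k)$. The most direct route is to build a grid diagram $D^k$ of small size for $L^k$ and to compute $\tau(L^k)$ from the filtered grid complex introduced in Section \ref{section:invariant}, using the characterization of $\tau(L)$ as the unique Alexander grading where $T_L(0,\cdot)$ is supported. An alternative, if $L^k$ turns out to be quasi-alternating for all $k$ (plausible given that $L^0=L9^a_{40}$ is alternating and that rational tangle replacements often preserve the quasi-alternating property), is to apply part (iv) of Proposition \ref{prop:one} and reduce to a computation of $\sigma(L^k)$ via a Goeritz matrix, expecting $|\sigma(L^k)|$ to grow linearly in $k$.

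The main obstacle is establishing the correct $k$-dependence: both candidate routes require an actual invariant computation rather than a formal argument, since nothing in the previous sections controls $\tau$ under a $2$-parameter twisting in the asymmetric way needed here (the twists are $-2k$ on one side and $+2k$ on the other, which naively cancel at the level of signature/linking but the key point will be that they contribute to $\tau$ with the same sign because they sit in different tangle positions relative to the two components). Once one has the asymptotic $\tau(L^k)\leq -k+O(1)$, or equivalently $\sigma(L^k)\geq 2k+O(1)$ in the quasi-alternating case combined with Corollary \ref{cor:tb}, Proposition \ref{prop:max} gives $\text{TB}(L^k)\leq 2\tau(L^k)-2\to-\infty$, proving the proposition.
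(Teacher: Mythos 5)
You have correctly identified the route the paper actually takes, but you stop short of carrying it out, and the part you leave undone is the entire content of the argument. The paper's proof is two sentences: first, every $L^k$ with $k\geq 0$ is a non-split alternating link (this is read off directly from Figure \ref{L9a40}; the $-2k$ and $+2k$ twist boxes insert crossings compatible with the alternating pattern), hence quasi-alternating; second, a direct computation on the alternating diagram gives $\sigma(L^k)=3+2k$, so Corollary \ref{cor:tb} yields $\text{TB}(L^k)\leq -1-\sigma(L^k)=-4-2k\to-\infty$. Your proposal names this signature computation as ``the main obstacle'' and then does not perform it, and you also leave open whether $L^k$ is quasi-alternating, when in fact it is visibly non-split alternating for all $k$. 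So while your plan is the right one — quasi-alternating plus Proposition \ref{prop:one}(iv) (equivalently Corollary \ref{cor:tb}) — the proposal as written establishes nothing beyond the reduction. To close the gap you need to observe the alternating structure of the diagram and actually compute $\sigma(L^k)$ (for instance via a Goeritz matrix associated to a checkerboard colouring), verifying it grows linearly in $k$; once you have $\sigma(L^k)=3+2k$, the conclusion is immediate.
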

The paper is organized as follows. In Section \ref{section:homology} we define the filtered chain complex $\widehat{GC}(D)$
and the homology group $\widehat{\mathcal{GH}}(L)$. In Section \ref{section:invariant} we introduce the function $T_L$
and we prove Proposition \ref{prop:one}. In Section \ref{section:cobordism}
we construct maps in homology, induced by a cobordism $\Sigma$ 
between two links $L_1$ and $L_2$ and we use them to prove 
Theorem \ref{teo:one} and Proposition \ref{prop:two}. In Section \ref{section:minus} we talk briefly
about the filtered version of $cGH^-(L)$ and we explain the proof of Theorem \ref{teo:set}. Finally, 
in Section \ref{section:application} we give some applications, including the
proof of Equations \eqref{torus_slice} and \eqref{tb_bound}.

\subsection*{Acknowledgements}
The author would like to thank Andr\'as Stipsicz for the many helpful conversations and the lots of 
time spent meeting and discussing mathematics.
The author is supported by the ERC Grant LDTBud from the Alfr\'ed R\'enyi Institute of Mathematics and a Full Tuition Waiver
for a Doctoral program at Central European University.

\section{Filtered simply blocked link grid homology}
\label{section:homology}
\subsection{The complex}
We always suppose that a link is oriented.
We denote by $D$ a toroidal grid diagram that represents an $n$-component link $L$. The number $\text{grd}(D)$ is
the number of rows and columns in the grid.
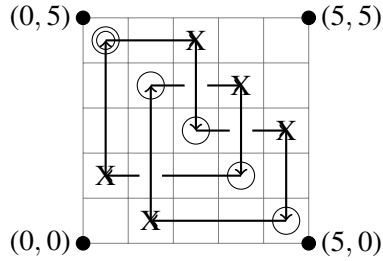
\begin{figure}[ht]
  \begin{center}
   \begin{tikzpicture}[scale=0.6]
    \draw[help lines] (0,0) grid (5,5); 
    \draw (0.5,4.5) circle [radius=0.3];
    \draw (0.5,4.5) circle [radius=0.2]; \node at (2.5,4.5) {X};
    \draw (1.5,3.5) circle [radius=0.3]; \node at (3.5,3.5) {X};
    \draw (2.5,2.5) circle [radius=0.3]; \node at (4.5,2.5) {X};
    \draw (3.5,1.5) circle [radius=0.3]; \node at (0.5,1.5) {X};
    \draw (4.5,0.5) circle [radius=0.3]; \node at (1.5,0.5) {X};
    \draw [thick][->] (0.5,1.5) -- (0.5,4.5); \draw [thick][->] (0.5,4.5) -- (2.5,4.5);
    \draw [thick][->] (1.5,0.5) -- (1.5,3.5); \draw [thick][->] (4.5,0.5) -- (1.5,0.5);
    \draw [thick][->] (2.5,4.5) -- (2.5,2.5); \draw [thick] (1.5,3.5) -- (2.25,3.5); 
                                              \draw [thick][->] (2.75,3.5) -- (3.5,3.5);
    \draw [thick][->] (3.5,3.5) -- (3.5,1.5); \draw [thick] (2.5,2.5) -- (3.25,2.5);
                                              \draw [thick][->] (3.75,2.5) -- (4.5,2.5);
    \draw [thick][->] (4.5,2.5) -- (4.5,0.5); \draw [thick] (3.5,1.5) -- (1.75,1.5);
                                              \draw [thick][->] (1.25,1.5) -- (0.5,1.5);
    \node[left] at (0,0) {$(0,0)$};  \draw[fill] (0,0) circle [radius=0.15];
    \node[left] at (0,5) {$(0,5)$};  \draw[fill] (0,5) circle [radius=0.15];
    \node[right] at (5,0) {$(5,0)$}; \draw[fill] (5,0) circle [radius=0.15];
    \node[right] at (5,5) {$(5,5)$}; \draw[fill] (5,5) circle [radius=0.15];
   \end{tikzpicture}
  \end{center}
  \caption{A grid diagram of the positive trefoil knot}
  \label{Example}
\end{figure}
The orientation in the diagram is taken by going from the $X$ to the $O$-markings in the columns and the opposite in the rows.
Vertical lines are numbered from left to right and horizontal lines from bottom to top, as shown in Figure \ref{Example}.
We identify the boundaries of the grid in order to make it a fundamental domain of a torus; then the lines of the diagram
are embedded circles in this torus. Any $\text{grd}(D)$-tuple of points $x$ in the grid, with
the property that each horizontal and vertical circle contains exactly one of the elements of $x$, 
is called a grid state of $D$.
 
Consider the set of the $O$-markings $\OO=\{O_1,...,O_{\text{grd}(D)}\}$. We call special $O$-markings a non-empty
subset $s\OO\subset\OO$ that contains at most one $O$-marking from each component of $L$, 
while we call the others 
normal $O$-markings. We represent the special
ones with a double circle in the grid diagram. In the paper we usually consider only the case when $\#|s\OO|=n$, which
means there is exactly one special $O$-marking on each component. We talk about the general case in Subsection 
\ref{subsection:new}. From now on there are always $n$ special $O$-markings in a grid diagram, unless it is 
explicitely written differentely.

We define the simply blocked complex $\widehat{GC}(D)$ as the free $\F[V_1,...,V_{\text{grd}(D)-n}]$-module, where 
$\F=\faktor{\Z}{2\Z}$, 
over the grid states $S(D)=\{x_1,...,x_{\text{grd}(D)!}\}$.

We associate to every grid state $x$ the integer $M(x)$, called the Maslov grading of $x$, defined as follows:
\begin{equation}
 \label{maslov}
 M(x)=M_{\OO}(x)=\mathcal J(x-\OO,x-\OO)+1\:;
\end{equation}
where $\mathcal J(P,Q)=\displaystyle\sum_{a\in P}\#\big\{(a,b)\in(P,Q)\:|\:\text{$b$ has both coordinates strictly bigger than 
the}$ $\text{ones of }a\big\}$ with coordinates taken in the interval $[0,\text{grd}(D))$. 

Then we have the Maslov $\F$-splitting  
$$\widehat{GC}(D)=\bigoplus_{d\in\Z}\widehat{GC}_d(D)$$
where $\widehat{GC}_d(D)$ is the finite dimensional $\F$-vector space generated by the elements
$V_1^{l_1}\cdot...\cdot V_{m}^{l_m}x$, with $x\in S(D)$ and $m=\text{grd}(D)-n$, such that 
$$M(V_1^{l_1}\cdot...\cdot V_{m}^{l_m}x)=M(x)-2\sum_{i=1}^ml_i=d\:.$$
We define another integer-valued function on grid states, the Alexander grading $A(x)$, with the formula
$$A(x)=\frac{M(x)-M_{\X}(x)}{2}-\frac{\text{grd}(D)-n}{2}\:;$$
where $M_{\X}(x)$ is defined in Equation \eqref{maslov}, replacing the set $\OO$ with $\X$. 
For the proof that $A(x)$ is really an integer we refer to Chapter 8 in \cite{Book}.

Now we introduce an increasing filtration on $\widehat{GC}(D)$ such that
$$\mathcal F^s\widehat{GC}(D)=\bigoplus_{d\in\Z}\mathcal F^s\widehat{GC}_d(D)$$
and where $\mathcal F^s\widehat{GC}_d(D)$ is generated over $\F$ by the elements $V_1^{l_1}\cdot...\cdot V_{m}^{l_m}x$ with
Maslov grading $d$ and Alexander grading
$$A(V_1^{l_1}\cdot...\cdot V_{m}^{l_m}x)=A(x)-\sum_{i=1}^ml_i\leq s\:.$$

\subsection{The differential}
First we take $x,y\in S(D)$. The set $\text{Rect}(x,y)$ is defined in the following way: it is always empty except when
$x$ and $y$ differs only by a pair of points, say $\{a,b\}$ in $x$ and $\{c,d\}$ in $y$; then $\text{Rect}(x,y)$
consists of the two rectangles in the torus represented by $D$ that have bottom-left and top-right vertices in $\{a,b\}$
and bottom-right and top-left vertices in $\{c,d\}$. We call $\text{Rect}^{\circ}(x,y)\subset\text{Rect}(x,y)$ the subset
of the rectangles which do not contain a point of $x$ (or $y$) in their interior.

The differential $\widehat{\partial}$ is defined as follows:
$$\widehat{\partial}x=\sum_{y\in S(D)}\sum_{\substack{r\in\text{Rect}^{\circ}(x,y) \\ 
r\cap s\OO=\emptyset}}V_1^{O_1(r)}\cdot...\cdot V_m
^{O_m(r)}y\:\:\:\:\text{for any }x\in S(D)$$
where $O_i(r)=\left\{\begin{aligned}
               1\:\:\:\:\:\text{if }O_i\in r \\
               0\:\:\:\:\:\text{if }O_i\notin r \\
                     \end{aligned}\right.$. Here $\{O_1,...,O_m\}$ is the set of the $m=\text{grd}(D)-n$ normal $O$-markings.

We extend $\widehat{\partial}$ to $\widehat{GC}_d(D)$ linearly, and we call it $\widehat{\partial}_d$, 
then again to the whole $\widehat{GC}(D)$ in the following way:
$\widehat{\partial}(V_ix)=V_i\cdot\widehat{\partial}x$ for every $i=1,...,m$ and $x\in S(D)$. Since $\widehat\partial$
keeps the filtration and drops the Maslov grading by 1 (Lemma 13.2.3 in \cite{Book}), we have maps
$$\widehat{\partial}_{d,s}:\mathcal F^s\widehat{GC}_d(D)\longrightarrow\mathcal F^s\widehat{GC}_{d-1}(D)$$
where $\widehat{\partial}_{d,s}$ is the restriction of $\widehat{\partial}_d$ to the subspace 
$\mathcal F^s\widehat{GC}_d(D)\subset\widehat{GC}_d(D)$. 
Furthermore $\widehat{\partial}\circ\widehat{\partial}=0$ (Lemma 13.2.2 in \cite{Book}).

\subsection{The homology}
\label{subsection:homology}
We define the homology group $\widehat{\mathcal{GH}}_d(D)$ as the quotient space
$\dfrac{\text{Ker }\widehat{\partial}_d}{\text{Im }\widehat{\partial}_{d+1}}$.
Moreover, we introduce the subspaces $\mathcal F^s\widehat{\mathcal{GH}}_d(D)$ as follows: consider the projection
$\pi_d:\text{Ker }\widehat\partial_d\rightarrow\widehat{\mathcal{GH}}_d(D)$. Since
$\text{Ker }\widehat{\partial}_{d,s}=\text{Ker }\widehat{\partial}_{d}\cap\mathcal F^s\widehat{GC}_d(D)$
we say that
$$\mathcal F^s\widehat{\mathcal{GH}}_d(D)=\pi_d(\text{Ker }\widehat{\partial}_{d,s})$$
for every $s\in\Z$. 
$\text{Ker }\widehat{\partial}_{d,s}\subset\text{Ker }\widehat{\partial}_{d,s+1}$ 
implies that the filtration $\mathcal F$ descends to homology. We see immediately that each 
$\mathcal F^s\widehat{\mathcal{GH}}_d(D)$ is a finite dimensional $\F$-vector space.

We can extend the filtration $\mathcal F$ on the total homology 
$$\widehat{\mathcal{GH}}(D)=\bigoplus_{d\in\Z}\widehat{\mathcal{GH}}_d(D)$$
by taking
$$\mathcal F^s\widehat{\mathcal{GH}}(D)=\bigoplus_{d\in\Z}\mathcal F^s\widehat{\mathcal{GH}}_d(D)\:.$$

From \cite{Book} Chapter 13 we know that the dimension of
$\mathcal F^s\widehat{\mathcal{GH}}_d(D)$ as an $\F$-vector space is a link 
invariant for every $d,s\in\Z$, 
in particular they are independent of the choice of the special $O$-markings and the ordering of the markings.
Hence we can denote them with $\mathcal F^s\widehat{\mathcal{GH}}_d(L)$. Furthermore, \cite{Book} Lemma 13.2.5
says that $[V_ip]=[0]$ for every $i=1,...,m$ and $[p]\in \widehat{\mathcal{GH}}(L)$. This means that each
homology class can be represented by a combination of grid states and every level 
$\mathcal F^s\widehat{\mathcal{GH}}(L)$ is also a finite dimensional $\F$-vector space. 

The homology group $\widehat{GH}_{d,s}(L)$ of \cite{Book} can be recovered from the complex $\widehat{GC}(D)$ in the
following way.
We denote the graded object associated to a filtered complex
$\mathcal C$ the bigraded chain complex 
$\big(\text{gr}(\mathcal C),\text{gr}(\partial)\big)$,
where
$$\text{gr}(\mathcal C)_{d,s}=\frac{\mathcal F^s\mathcal C_d}{\mathcal F^{s-1}\mathcal C_d}$$
and $\text{gr}(\partial)$ is the map induced by $\partial$ on $\text{gr}(\mathcal C)$.
Then we have that 
$$\widehat{GH}_{d,s}(L)\cong_{\F}H_{d,s}\left(\text{gr}\left(\widehat{GC}(D)\right),\text{gr}(\widehat\partial)\right)\:.$$

\section{The invariant tau in the filtered theory}
\label{section:invariant}
\subsection{Definitions}
\label{subsection:definitions}
Since $\mathcal F^{s-1}\widehat{\mathcal{GH}}_d(L)\subset\mathcal F^s\widehat{\mathcal{GH}}_d(L)$, 
and they are finite dimensional vector spaces,
we define the function
$$T_L(d,s)=\text{dim}_{\F}\frac{\mathcal F^s\widehat{\mathcal{GH}}_d(L)}{\mathcal F^{s-1}\widehat{\mathcal{GH}}_d(L)}$$
which clearly is still a link invariant.

Our first goal is to see what happens to this function $T$ when we stabilize the link $L$, in other words
when we add a disjoint 
unknot to $L$. Denote the unknot with the symbol $\bigcirc$. We claim that 
\begin{equation}
 \label{unknot}
 T_{L\sqcup\bigcirc}(d,s)=T_L(d,s)+T_L(d+1,s)\:\:\:\:\:\text{for any }d,s\in\Z\:.
\end{equation}
Before the proof of Equation \eqref{unknot} it is time for some remarks on filtered chain maps.

Suppose $f:(\mathcal C,\partial)\rightarrow(\mathcal C',\partial')$
is a chain map between two filtered chain complexes over $\F$. We say that
$f$ is filtered of degree $t$ if $f(\mathcal F^s\mathcal C)\subset\mathcal F^{s+t}\mathcal C'$ for every $s\in\Z$.

A filtered chain map induces a map in homology that is filtered of the same degree. This means that $f$ induces a map
$f_*:H_*(\mathcal C)\rightarrow H_*(\mathcal C')$ such that $f_*(\mathcal F^s H_*(\mathcal C))\subset\mathcal F^{s+t}
H_*(\mathcal C')$ for any $s\in\Z$.

We say that a linear map $F:H_*(\mathcal C)\rightarrow H_*(\mathcal C')$
is a filtered isomorphism if $F$ is 
bijective and $F(\mathcal F^s H_*(\mathcal C))=\mathcal F^{s}H_*(\mathcal C')$ for any $s\in\Z$.
We denote with $H_*(\mathcal C)\cong H_*(\mathcal C')$ two filtered isomorphic homology groups such that the
isomorphism preserves the grading; more excplicitely this means that 
$\mathcal F^s H_d(\mathcal C)\cong_{\F}\mathcal F^s H_d(\mathcal C')$ for every $d,s\in\Z$.

Moreover, we can associate to a filtered chain map $f:\mathcal C\rightarrow\mathcal C'$ the quotient map
$$\text{gr}(f):\text{gr}(\mathcal C)\longrightarrow
\text{gr}(\mathcal C')\:.$$
We call $f$ a filtered quasi-isomorphism if the map $\text{gr}(f)$ 
induces an isomorphism between $H_{*,*}(\text{gr}(\mathcal C))$ and $H_{*,*}(\text{gr}(\mathcal C'))$ that preserves
the gradings. We denote with $\mathcal C\cong\mathcal C'$ two filtered quasi-isomorphic
complexes.

From Proposition A.6.1 in \cite{Book} we have that if there is a filtered quasi-isomorphism between 
$(\mathcal C,\partial)$ and $(\mathcal C',\partial')$ then
$H_*(\mathcal C)\cong H_*(\mathcal C')$. While from Proposition A.8.1 in \cite{Book}
we know that $\mathcal C\cong\mathcal C'$ if and only if there is a filtered chain homotopy
equivalence between $\mathcal C$ and $\mathcal C'$, provided $\mathcal C$ and $\mathcal C'$
are also modules over $\F[V_1,...,V_m]$. See \cite{Book} Chapter 13 and Appendix A for more details.

Finally, we define the 
shifted complex $\mathcal C\lkhov a,b\rkhov=\mathcal C'$ as $\mathcal F^s\mathcal C'_d=
\mathcal F^{s-b}\mathcal C_{d-a}$. Now, in order to prove Equation \eqref{unknot}, we need the following proposition.
\begin{prop}
 \label{prop:unknot}
 For any link $L$ we have $\widehat{\mathcal{GH}}(L\sqcup\bigcirc)\cong\widehat{\mathcal{GH}}(L)\otimes V$,
 where $V$ is the two dimensional $\F$-vector space with generators in grading and minimal 
 level $(d,s)=(-1,0)$ and $(d,s)=(0,0)$.
\end{prop}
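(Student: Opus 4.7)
The plan is to choose a concrete grid diagram for $L\sqcup\bigcirc$ adapted to a given grid $D$ for $L$, and to exhibit a filtered quasi-isomorphism $\widehat{GC}(D')\cong \widehat{GC}(D)\otimes V$, from which the statement follows by passing to homology via Proposition A.6.1 of \cite{Book}. Starting from a grid diagram $D$ for $L$ of grid number $n$, I would build $D'$ of grid number $n+2$ by inserting a $2\times 2$ block in a corner of $D$, with two new $O$-markings on one diagonal and two new $X$-markings on the other. This block realizes the extra component as a small disjoint unknot; one of the two new $O$'s is declared to be the special marking of the new component, and the other is a normal $O$.

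Next, I would partition $S(D')$ according to its intersection with the new $2\times 2$ block. Two ``canonical'' configurations appear: one where the two points inside the block sit at the new $O$-diagonal and one where they sit at the new $X$-diagonal; together these are parameterized by $S(D)\times\{e_0,e_1\}$. The remaining configurations (with zero or one point in the block) should form a filtered-acyclic piece, cancellable by a sequence of filtered quasi-isomorphisms, adapting the stabilization-invariance technique of \cite{Book}, Chapter~5 to the filtered setting developed in Chapter~13. After cancellation, I would identify what remains with $\widehat{GC}(D)\otimes V$ where $V=\F\langle e_0,e_1\rangle$ has zero differential: any rectangle entirely inside the block must contain the new special $O$ and is therefore forbidden, while rectangles crossing between old and new regions are excluded by the corner placement, so all surviving contributions to $\widehat\partial_{D'}$ come from rectangles supported in the old grid and act as $\widehat\partial_D\otimes \mathrm{id}_V$.

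Then I would verify the bigradings of $e_0, e_1$ by applying \eqref{maslov} and the defining formula for $A$ locally to the $2\times 2$ block. A short calculation should yield $(M,A)(e_0)=(0,0)$ and $(M,A)(e_1)=(-1,0)$; in particular both states lie at Alexander level $0$, so the tensor decomposition is filtered of degree zero and the promised filtered isomorphism follows.

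The main obstacle will be the filtered cancellation of the non-canonical grid states: in the unfiltered theory this is a standard stabilization move, but in the filtered setting one must check that every chain homotopy used in the reduction is filtered of degree $\leq 0$ in the Alexander direction and never involves rectangles containing the new special $O$. The locality of the $2\times 2$ insertion should make these verifications accessible, but they will require careful bookkeeping of the exponents $O_i(r)$ and of Alexander shifts along the entire cancellation sequence.
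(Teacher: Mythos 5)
Your outline — choose an explicit grid enlargement of $D$ and exhibit a filtered quasi-isomorphism to $\widehat{GC}(D)\otimes V$ — matches the paper's, but the choice of stabilization and, more importantly, the mechanism of comparison differ, and as written your version has gaps. The paper adds a single new row and column carrying one \emph{doubly-marked} square, so $\mathrm{grd}(\overline D)=\mathrm{grd}(D)+1$; the new unknot component then has exactly one $O$ (forced to be special) and one $X$, and the ground ring $\F[V_1,\ldots,V_m]$ is unchanged. Your $2\times 2$ block adds two $O$'s and two $X$'s, so $\mathrm{grd}(D')=\mathrm{grd}(D)+2$ and one of the new $O$'s must be a \emph{normal} marking; consequently $\widehat{GC}(D')$ lives over a ring with one extra variable, which already prevents a module-level identification with $\widehat{GC}(D)\otimes V$. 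The extra normal $O$ also undercuts the rectangle count: the four unit squares of the block carry, respectively, the special $O$, the normal $O$, and two $X$'s, so it is simply false that every rectangle supported in the block must contain the new special $O$. Block-supported rectangles through the normal $O$ survive with a $V_i$ attached, and there are also rectangles carrying a canonical state to a state with $0$ or $1$ points in the block; hence the asserted chain-level identity $\widehat\partial_{D'}=\widehat\partial_D\otimes\mathrm{id}_V$ on the canonical piece does not hold.

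The deeper gap is the step you call ``filtered cancellation of the non-canonical grid states,'' which is asserted but not constructed. After cancelling a contractible subquotient, the induced differential on the survivors is \emph{not} the restriction of $\widehat\partial_{D'}$ — it picks up zig-zag corrections through the chosen homotopy — so even if the non-canonical piece were acyclic you could not conclude the remaining differential is $\widehat\partial_D\otimes\mathrm{id}_V$ without controlling those corrections. The paper avoids this entirely. It splits $S(\overline D)$ into $I$ (states containing the distinguished corner of the doubly-marked square) and $N$, observes that $\mathbf N$ is a filtered subcomplex and that the restriction of the differential to $\mathbf I$ has a genuine off-diagonal component $\widehat\partial_{\mathbf N}\colon\mathbf I\to\mathbf N$, and then introduces the map $H\colon\mathbf N\to\mathbf I$ counting empty rectangles through the new special $O$. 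The facts that $H$ is a filtered chain homotopy equivalence raising Maslov grading by $1$ and that $H\circ\widehat\partial_{\mathbf N}=0$ exhibit $\widehat{GC}(\overline D)=\mathrm{Cone}(\widehat\partial_{\mathbf N})$ and $\widehat{GC}(D)\otimes V=\mathrm{Cone}(0)$, after which a filtered version of Lemma A.3.8 of \cite{Book} promotes the componentwise quasi-isomorphisms $i$ and $i\circ H$ to one between the cones. Nothing playing the role of $H$ appears in your sketch, and without such a map your argument does not close; I would suggest reverting to the single doubly-marked-square stabilization and then supplying the homotopy equivalence $H$ explicitly, as the paper does.
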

\begin{proof} 
 Take a grid diagram $D$ for $L$. Then the extended
 diagram $\overline{D}$, obtained from $D$ by adding one column on the left and
 one row on the top with a doubly-marked square in the top left, represents the link $L\sqcup\bigcirc$.
 The circle in the doubly-marked square is forced to be a special $O$-marking and we can also suppose that there is 
 another special $O$-marking just below and right of it, as shown in Figure \ref{Stabilization}.
 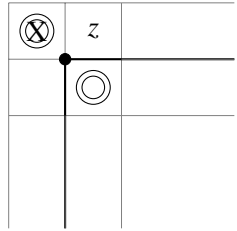
\begin{figure}[ht]
  \begin{center}
   \begin{tikzpicture}[scale=0.75]
    \draw[help lines] (0,0) grid (2,2); 
    \draw[thick] (1,1)--(4,1);       \draw[thick] (1,1)--(1,-2);
    \draw[help lines] (0,-2)--(0,0); \draw[help lines] (2,0)--(4,0);
    \draw[help lines] (1,-2)--(1,0); \draw[help lines] (2,1)--(4,1);
    \draw[help lines] (2,-2)--(2,0); \draw[help lines] (2,2)--(4,2);
    \draw[fill] (1,1) circle [radius=0.1];
    \draw (0.5,1.5) circle [radius=0.3]; \draw (1.5,0.5) circle [radius=0.2];
    \draw (0.5,1.5) circle [radius=0.2]; \draw (1.5,0.5) circle [radius=0.3];
    \node at (0.5,1.5) {X}; \node at (1.5,1.5) {$z$};
   \end{tikzpicture}
  \end{center}
  \caption{We call the top-left special $O$-marking $O_0$}
  \label{Stabilization}
 \end{figure} 
 
 Let $I(\overline D)$ denote the set of those generators in $S(\overline D)$ which have a component at the
 lower-right corner of the doubly-marked square, and let $N(\overline D)$ be the complement
 of $I(\overline D)$ in $S(\overline D)$. From the placement of the special $O$-markings, we see that $N(\overline D)$ spans
 a subcomplex $\mathbf N$ in $\widehat{GC}(\overline D)$. Moreover, if $\widehat\partial_1$ is the differential in
 $\widehat{GC}(\overline D)$ and $\widehat\partial_2$ the one in $\widehat{GC}(D)$
 we can express the restriction of
 $\widehat\partial_1$ to
 the subspace $\mathbf I$, spanned by $I(\overline D)$, with $\widehat\partial_2+\widehat\partial_{\mathbf N}$; this
 because there is a one-to-one correspondence between elements of $I(\overline D)$ and grid states
 in $S(D)$. This correspondence induces
 a filtered quasi-isomorphism $i:(\mathbf I,\widehat\partial_2)\rightarrow\widehat{GC}(D)$.
 
 Define a map $H:\mathbf N\rightarrow\mathbf I$ by the formula
 $$H(x)=\sum_{y\in I(\overline D)}\sum_{\substack{r\in\text{Rect}^{\circ}(x,y) \\ 
 O_0\in r}}V_1^{O_1(r)}\cdot...\cdot V_m^{O_m(r)}y\:\:\:\:\:\text{for any }x\in N(\overline D)\:.$$
 We have that $H$ is a filtered chain homotopy equivalence between 
 $(\mathbf N,\widehat\partial_1)$ and $(\mathbf I,\widehat\partial_2)$, which increases the Maslov grading by one, and
 $H\circ\widehat\partial_{\mathbf N}=0$.
 To see the first claim, we mark the square just on the right of $O_0$ with $z$ and we define an operator 
 $\widehat H_z:\mathbf I\rightarrow\mathbf N$, which counts only rectangles that contain $z$. This operator is a homology
 inverse of $H$; this and the second claim can be proved in the same way as in Lemma 8.4.7 in \cite{Book}.
 Those two facts together tell us that the following diagram commutes. 
 \begin{figure}[H]
  \begin{center}
   \begin{tikzpicture}[node distance=3cm, auto]
                       \node (A) {$\mathbf I$};
		       \node (B) [right of=A] {$\mathbf N$};
                       \node (C) [below of=A,yshift=1cm] {$\widehat{GC}(D)$};
                       \node (D) [right of=C] {$\widehat{GC}(D)\lkhov-1,0\rkhov$};
                       \draw[->] (A) to node  {$\widehat\partial_{\mathbf N}$} (B);
                       \draw[->] (A) to node  [swap]{$i$} (C);
                       \draw[->] (B) to node  {$i\circ H$} (D);
                       \draw[->] (C) to node  {$0$} (D);
   \end{tikzpicture}
  \end{center}
 \end{figure}
 Since $H$ is a filtered chain homotopy equivalence, $i\circ H$ is a filtered quasi-isomorphism, just like the map $i$. 
 Therefore, we can use a filtered version of Lemma A.3.8 in \cite{Book} and 
 obtain that the map between the mapping cones
 $$\text{Cone}(\widehat\partial_{\mathbf N})=\widehat{GC}(\overline D)\longrightarrow\text{Cone}(0)=\widehat{GC}(D)\otimes V$$
 is a filtered quasi-isomorphism and so the claim follows easily. See also the proof of Lemma 8.4.7 in \cite{Book} for other
 details.
\end{proof}
Equation \eqref{unknot} is obtained immediately from Proposition \ref{prop:unknot}, in fact we have proved that
$\mathcal F^s\widehat{\mathcal{GH}}_d(L\sqcup\bigcirc)\cong\mathcal F^s\widehat{\mathcal{GH}}_d(L)\oplus
\mathcal F^s\widehat{\mathcal{GH}}_{d+1}(L)$ for every $d,s\in\Z$ and so it is enough to apply the definition of $T$.

Now we are able to do some computations. 
The homology of the unknot can be easily computed by taking the grid diagram of dimension 1, where the square is marked 
with both $X$ and $O$. The complex has one element of Maslov and Alexander grading 0; then $T_{\bigcirc}(d,s)=1$ if
$(d,s)=(0,0)$ and it is 0 otherwise.

Using Equation \eqref{unknot} we get the function $T$ of the $n$-component unlink $\bigcirc_n$:
$$T_{\bigcirc_n}(d,s)=\left\{\begin{aligned}
                                 &\binom{n-1}{k}\:\:\:\:\:\text{if }(d,s)=(-k,0),\:\:\:0\leq k\leq n-1 \\
                                 &\:\:\:\:\:\:\:0\hspace{3cm}\text{otherwise}
                                \end{aligned}\right.\:.$$     
We can also see this directly from Proposition \ref{prop:unknot}, 
in fact we have that $\widehat{\mathcal{GH}}(\bigcirc_n)\cong V^{\otimes(n-1)}$.

Now let us consider a grid diagram $D$ of a link $L$. The Maslov grading of the elements of $S(D)$ 
and the differential $\widehat{\partial}$ are independent of the position
of the $X$'s, once we have fixed the special $O$-markings. Since we can always change the $X$-markings to obtain 
$\bigcirc_n$, this means that 
$\text{dim}_{\F}\widehat{\mathcal{GH}}_d(L)=\text{dim}_{\F}\widehat{\mathcal{GH}}_d(\bigcirc_n)$
for every $d\in\Z$ and the generators are the same. In particular
$$\widehat{\mathcal{GH}}(L)\cong_{\F}\widehat{\mathcal{GH}}(\bigcirc_n)\cong_{\F}\F^{2^{n-1}}\:\:\:\:\:\text{and}\:\:\:\:\:
\widehat{\mathcal{GH}}_d(L)\cong_{\F}\F^{\binom{n-1}{-d}}\:\:\:\:\:\text{when }1-n\leq d\leq 0\:.$$

From this we have that $\widehat{\mathcal{GH}}_0(L)$ has always dimension 1 and then we define $\tau(L)$ as
the only integer $s$ such that $T_L(0,s)>0$, as we previously said
in the Introduction. We remark that
for a knot this version of $\tau$ coincides with the one of Ozsv\'ath and Szab\'o. See the proof of Theorem \ref{teo:set}
in Section \ref{section:minus}. We also observe that
Equation \eqref{unknot} tells that $\tau(L\sqcup\bigcirc)=\tau(L)$.

\subsection{Dropping the special \texorpdfstring{$O$}{Lg}-markings}
\label{subsection:new}
In this subsection we study what happens to the homology of the filtered chain complex 
$\left(\widehat{GC}(D),\widehat\partial\right)$ if the grid diagram $D$ has less than $n$ special $O$-markings.

Let us consider $D$ a grid diagram for an $n$-component link $L$. The set $s\OO\subset\OO$ contains at most one $O$-marking
from each component of $L$, but we have that $\#|s\OO|\geq 1$. Denote with $m=\text{grd}(D)-\#|s\OO|$ the number of normal
$O$-markings in $D$. Then we can define our chain complex exactly 
in the same way as in Section \ref{section:homology}; on the other
hand, the homology $\widehat{\mathcal{GH}}(D)$ is no longer a link invariant, in fact it clearly depends of the choice of
the special $O$-markings.

Nonetheless, we can show that the $\F$-vector space $\widehat{\mathcal{GH}}(D)$ is still finite dimensional.
Note that this is not true if instead we consider the simply blocked homology group $\widehat{GH}(D)$.
\begin{prop}
 The homology group $\widehat{\mathcal{GH}}(D)$, defined as in Subsection \ref{subsection:homology}, is $\F$-isomorphic to
 $\widehat{\mathcal{GH}}\left(\bigcirc_{\#|s\OO|}\right)$, 
 the homology of the unlink with $\#|s\OO|$ components each containing a special $O$-marking. 
 In particular, we have that
 $$\widehat{\mathcal{GH}}(D)\cong_{\F}\F^{2^{\#|s\OO|}-1}\:\:\:\:\:\text{and}\:\:\:\:\:
 \widehat{\mathcal{GH}}_d(D)\cong_{\F}\F^{\binom{\#|s\OO|-1}{-d}}$$ when $1-\#|s\OO|\leq d\leq 0$. 
\end{prop}
\begin{proof}
 As we noted before, the group $\widehat{\mathcal{GH}}(D)$ does not depend on the position of the $X$-marking. Since we can
 always change them in a way that $D$ becomes a diagram for an unlink with a special $O$-marking on every component, the  
 claim follows from the results in the previous subsection.
\end{proof}
Even though in this case the homology is no longer a link invariant, we can still prove the following theorem.
\begin{teo}
 \label{teo:new}
 Let us consider two grid diagrams $D_1$ and $D_2$ representing smoothly isotopic links $L_1$ and $L_2$ such that all the 
 isotopic components both contain or not contain a special $O$-marking.
 Then we have that $\widehat{\mathcal{GH}}(D_1)$ is filtered isomorphic to $\widehat{\mathcal{GH}}(D_2)$ and the 
 isomorphism preserves the Maslov grading. Hence, we can denote
 the homology group of an $n$-component link $L$ with $\widehat{\mathcal{GH}}^{\OO}(L)$ and it depends only on which 
 components of the link contain a special $O$-marking.
\end{teo}
\begin{proof}
 Lemma 4.1 in \cite{Sarkar} tells us that such two grid diagrams differ by a finite sequence of grid moves: reordering
 of the $O$-markings, commutations and stabilizations. See Section 3 in \cite{Sarkar} for more details. Then it is enough
 to prove the theorem in the case when $D_2$ is obtained from $D_1$ by one of these three moves.
 
 Applying the results in \cite{Book} Chapter 5 and 13 we find filtered quasi-isomorphisms for each move and this implies
 $\widehat{\mathcal{GH}}(D_1)\cong\widehat{\mathcal{GH}}(D_2)$.
\end{proof}
We use the homology groups $\widehat{\mathcal{GH}}^{\OO}(L)$ to define some cobordism maps in Section \ref{section:cobordism}.

\subsection{Symmetries}
\subsubsection{Reversing the orientation}
If $-L$ is the link obtained from $L$ by reversing the orientation of all the components then
\begin{equation}
 \label{reverse}
 T_{-L}(d,s)=T_L(d,s)\:\:\:\:\:\text{for any }d,s\in\Z
\end{equation} 
and $\tau(-L)=\tau(L)$.

To see this, consider a grid diagram $D$ of $L$, then it is easy to observe that, if we reflect $D$ along the diagonal
going from the top-left to the bottom-right of the grid, the diagram $D'$ obtained in this way represents $-L$.
Hence, we take the map $\Phi:S(D)\rightarrow S(D')$ that sends a grid state $x$ into its reflection $x^-$ and now,
from Proposition 4.3.1 in \cite{Book}, we have that $M(x^-)=M(x)$ and $A(x^-)=A(x)$. This means that $\Phi$ is a 
filtered quasi-isomorphism between $\widehat{GC}(D)$ and $\widehat{GC}(D')$, since clearly the differentials commute with 
$\Phi$.
This gives that $\widehat{\mathcal{GH}}(-L)\cong\widehat{\mathcal{GH}}(L)$ and then Equation \eqref{reverse}
follows.

\subsubsection{Mirror image}
For an $n$-component link $L$ we have that the function $T$ of the mirror image $L^*$ is given by the following
equation
\begin{equation}
 \label{mirror}
 T_{L^*}(d,s)=T_L(-d+1-n,-s)\:\:\:\:\:\text{for any }d,s\in\Z\:.
\end{equation}
The proof of this relation is similar to the proof of 
Proposition 7.1.2 in \cite{Book}. First, given a complex $\mathcal C$ with a
filtration $\mathcal F$, we introduce the filtered dual complex $\mathcal C^*$, 
equipped with a filtration 
$\mathcal F^*$ by taking
$$(\mathcal F^*)^s(\mathcal C^*)_d=\text{Ann}(\mathcal F^{-s-1}\mathcal C_{-d})\subset(\mathcal C_{-d})^*=(\mathcal C^*)_d
\:\:\:\:\:\text{for any }d,s\in\Z\:,$$
where $\text{Ann}(\mathcal F^h\mathcal C_k)$ is the subspace of $(\mathcal C_k)^*$ consisting of 
all the linear functionals
that are zero over $\mathcal F^h\mathcal C_k$.

Second, given a grid diagram $D$ of $L$, 
we call $\left(\widetilde{GC}(D),\widetilde\partial\right)$ the filtered fully blocked chain complex 
$\dfrac{\left(\widehat{GC}(D),\widehat\partial\right)}{V_1=...=V_m=0}$ and
we also denote with $W$ the two dimensional $\F$-vector space with generators in grading and minimal 
level $(d,s)=(0,0)$ and $(d,s)=(-1,-1)$.
We want to prove the following proposition.
\begin{prop}
 \label{prop:mirror}
 $\widehat{\mathcal{GH}}(L^*)\cong\widehat{\mathcal{GH}}^*(L)\llbracket1-n,0\rrbracket$, where the filtration
 on $\widehat{\mathcal{GH}}^*(L)$ is $\mathcal F^*$.
\end{prop}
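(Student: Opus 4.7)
The plan is to mimic the strategy of Proposition 7.1.2 in \cite{Book}, but carrying the Alexander filtration rather than only the Alexander grading through the argument. Given a grid diagram $D$ of $L$ with $\text{grd}(D)=k$, I would construct a grid diagram $D^*$ of $L^*$ by rotating $D$ by $180^{\circ}$ (equivalently, reflecting through the centre of the torus), so that the $X$- and $O$-markings swap positions in a symmetric way and the orientation convention produces the mirror link. The $180^{\circ}$-rotation induces a canonical bijection $\varphi: S(D)\to S(D^*)$; the key point is that every empty rectangle $r\in\text{Rect}^{\circ}(x,y)$ in $D$ corresponds, under $\varphi$, to an empty rectangle $r^*\in\text{Rect}^{\circ}(\varphi(y),\varphi(x))$ in $D^*$ carrying exactly the same multiplicities at the normal $O$-markings (and avoiding the special $O$-markings iff $r$ does). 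Hence the differential $\widehat\partial_{D^*}$, read in the basis $\varphi(S(D))$, is exactly the transpose of $\widehat\partial_D$.

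Next I would compute the shifts in the Maslov and Alexander functions under $\varphi$. Using the combinatorial formula for $M_{\OO}$ and $M_{\X}$ applied to the rotated diagram (cf.\ Proposition 4.3.1 in \cite{Book}), one checks
$$M_{D^*}(\varphi(x))=-M_D(x)+(1-n),\qquad A_{D^*}(\varphi(x))=-A_D(x),$$
so that $\varphi$ identifies $\widehat{GC}(D^*)$ with a shifted dual complex $\widehat{GC}^*(D)\lkhov 1-n,0\rkhov$ at the level of filtered chain complexes. The crucial verification here is that Alexander inequalities reverse in the right way, so that the subspace $\mathcal F^s\widehat{GC}(D^*)$ of chains with Alexander level $\leq s$ matches $\text{Ann}(\mathcal F^{-s-1}\widehat{GC}(D))$, which is exactly the definition of $(\mathcal F^*)^s$ after the Maslov shift. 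This is precisely where the definition of the filtered dual complex given just before the statement pays off.

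Once the filtered chain level identification $\widehat{GC}(D^*)\cong \widehat{GC}^*(D)\lkhov 1-n,0\rkhov$ is in place, the proposition follows by taking homology: since we are working over the field $\F$ and each filtration level is finite-dimensional, the universal coefficient / duality statement gives $H_*(\widehat{GC}^*(D))\cong H_*(\widehat{GC}(D))^*$ together with the dual filtration, and this duality commutes with the grading shift brackets. Composing gives $\widehat{\mathcal{GH}}(L^*)\cong\widehat{\mathcal{GH}}^*(L)\lkhov 1-n,0\rkhov$ with the filtration $\mathcal F^*$, as claimed.

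I expect the main obstacle to be the bookkeeping that makes the \emph{filtered} dual work cleanly: one has to verify not only that the chain map induced by $\varphi$ reverses Alexander gradings, but that it actually sends the subcomplex $\mathcal F^s\widehat{GC}(D^*)$ to the annihilator $\text{Ann}(\mathcal F^{-s-1}\widehat{GC}(D))$ (and conversely), since subtle issues can arise from elements that are sums across several filtration levels. The Maslov shift constant $1-n$ is the other delicate point, but this is already known from the unfiltered statement in \cite{Book} (the corresponding discrepancy between $k$ and $n$ coming from the $m=k-n$ variables $V_i$), so I would essentially import that computation unchanged.
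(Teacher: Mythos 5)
Your plan of dualizing the filtered complex via a grid symmetry is the right idea in spirit, but the direct chain-level identification $\widehat{GC}(D^*)\cong\widehat{GC}^*(D)\lkhov 1-n,0\rkhov$ you assert does not exist, and this is not merely a bookkeeping issue. The complex $\widehat{GC}(D)$ is a free module over $\F[V_1,\dots,V_m]$; while the reflection bijection on grid states and empty rectangles does transpose the $\F[V]$-valued differential matrix, that $\F[V]$-transpose is \emph{not} the $\F$-linear dual used in the statement. Concretely, $\widehat{GC}_d(D^*)$ has $\F$-basis $\{V^{\ell}\varphi(x)\}$ indexed by pairs with $M(x)+2|\ell|$ equal to a constant minus $d$, whereas $(\widehat{GC}^*(D))_{d+\mathrm{shift}}$ is spanned by functionals $(V^{\ell}x)^*$ indexed by pairs with $M(x)-2|\ell|$ equal to a constant minus $d$: the sign on $|\ell|$ is opposite, so the $\F$-dimensions in each Maslov degree disagree. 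Already for the $2\times 2$ unknot grid, $\widehat{GC}(D^*)$ is supported in nonpositive Maslov degrees while $\widehat{GC}^*(D)$ is supported in nonnegative ones, and no constant grading shift reconciles them. For the same reason your shift formulas are off: the grid-state computation gives $M(x^*)=-M(x)+1-\mathrm{grd}(D)$ and $A(x^*)=-A(x)+n-\mathrm{grd}(D)$, not $1-n$ and $0$.

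The paper therefore detours through the fully blocked complex $\widetilde{GC}(D)=\widehat{GC}(D)/(V_1=\cdots=V_m=0)$, which is finite-dimensional over $\F$ with no $V$-variables; there the reflection argument genuinely produces a filtered isomorphism $\widetilde{GC}(D^*)\cong\widetilde{GC}^*(D)\lkhov 1-\mathrm{grd}(D),\,n-\mathrm{grd}(D)\rkhov$. The passage back to $\widehat{GC}$ is Equation \eqref{tilde}, $\widetilde{GC}(D)\cong\widehat{GC}(D)\otimes W^{\otimes(\mathrm{grd}(D)-n)}$, together with $(W^*)^{\otimes m}\cong W^{\otimes m}\lkhov m,m\rkhov$; cancelling the $W$-factors converts the shift to $\lkhov 1-n,0\rkhov$ and yields the claimed filtered \emph{quasi-}isomorphism, which is all that is needed to pass to homology. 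A secondary point: a $180^{\circ}$ rotation of a diagram is orientation-preserving and returns the same link type, not the mirror; one needs a single reflection (the paper reflects across a horizontal axis).
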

\begin{proof}
 Let $D^*$
 be the diagram obtained by reflecting $D$ through a
 horizontal axis. The diagram $D^*$ represents $L^*$. Reflection induces a bijection
 $x\rightarrow x^*$ between grid states for $D$ and those for $D^*$, inducing a bijection between
 empty rectangles in $\text{Rect}^{\circ}(x,y)$ and empty rectangles in $\text{Rect}^{\circ}(y^*,x^*)$. Hence, the
 reflection induces a filtered isomorphism 
 $$\widetilde{GC}(D^*)\cong\widetilde{GC}^*(D)\lkhov1-\text{grd}(D),n-\text{grd}(D)
 \rkhov\:,$$
 where the shifts are given by the fact that $M(x^*)=-M(x)+1-\text{grd}(D)$ and $A(x^*)=-A(x)+n-\text{grd}(D)$.
 
 Now, from Lemma 14.1.11 in \cite{Book}, we have the filtered quasi-isomorphism
 \begin{equation}
  \label{tilde}
  \widetilde{GC}(D)\cong\widehat{GC}(D)\otimes W^{\otimes(\text{grd}(D)-n)}\:.
 \end{equation} 
 Combining these two relations and observing that $$(W^*)^{\otimes(\text{grd}(D)-n)}
 \cong W^{\otimes(\text{grd}(D)-n)}\lkhov\text{grd}(D)-n,\text{grd}(D)-n\rkhov$$ leads to the
 filtered quasi-isomorphism
 $$\widehat{GC}(D^*)\cong\widehat{GC}^*(D)\lkhov1-n,0\rkhov\:.$$ 
\end{proof}
Proposition \ref{prop:mirror} says that 
$\mathcal F^s\widehat{\mathcal{GH}}_d(L^*)\cong(\mathcal F^*)^s\left(\widehat{\mathcal{GH}}^*\right)_{d-1+n}(L)$ 
for every $d,s\in\Z$. Then we can prove Equation \eqref{mirror}:
$$\begin{aligned}
        T_{L^*}(d,s)&=\text{dim }\frac{(\mathcal F^*)^s\left(\widehat{\mathcal{GH}}^*\right)_{d-1+n}(L)}
 {(\mathcal F^*)^{s-1}\left(\widehat{\mathcal{GH}}^*\right)_{d-1+n}(L)}=
 \text{dim }\frac{\text{Ann}\left(\mathcal F^{-s-1}\widehat{\mathcal{GH}}_{-d+1-n}(L)\right)}
 {\text{Ann}\left(\mathcal F^{-s}\widehat{\mathcal{GH}}_{-d+1-n}(L)\right)}=\\
 &=\text{dim }\frac{\mathcal F^{-s}\widehat{\mathcal{GH}}_{-d+1-n}(L)}
 {\mathcal F^{-s-1}\widehat{\mathcal{GH}}_{-d+1-n}(L)}
 =T_L(-d+1-n,-s)\:.
\end{aligned}$$
If we define $\tau^*(L)$ as the unique integer such that $T_L(1-n,\tau^*(L))=1$ then we have proved
that 
\begin{equation}
 \label{tau_bar}
 \tau(L^*)=-\tau^*(L)\:.
\end{equation}
In particular for a knot $K$, where $\tau^*(K)=\tau(K)$, we have
$\tau(K^*)=-\tau(K)$.
Moreover, we have the following corollary.
\begin{cor}
 \label{cor:mirror}
 Suppose that $L$ is smoothly isotopic to $L^*$. 
 Then, $$T_{L^*}(d,s)=T_L(d,s)\:\:\:\:\:\text{for any }d,s\in\Z$$ and so Equation \eqref{mirror} gives that
 the function $T_L$ has a central symmetry in the point $\left(\frac{1-n}{2},0\right)$.
 In particular $\tau^*(L)=-\tau(L)$ and, for knots, $\tau(K)=0$.
\end{cor}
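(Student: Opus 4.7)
The plan is to deduce the corollary directly from Equation \eqref{mirror} and the hypothesis $L = L^*$. Since $T$ is a link invariant (established in Section \ref{subsection:definitions}) and $L$ is isotopic to its mirror, we have $T_{L^*}(d,s) = T_L(d,s)$ for all $d,s \in \Z$. Substituting this into Equation \eqref{mirror} gives the single identity
$$T_L(d,s) = T_L(-d+1-n,\,-s) \quad \text{for all } d,s \in \Z.$$
I would then observe that the map $(d,s) \mapsto (-d+1-n,-s)$ is precisely the $180^{\circ}$ rotation fixing $\left(\frac{1-n}{2},0\right)$, which is exactly the claimed central symmetry of $T_L$.

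Next I would read off the $\tau$-identity from a specific slice. Setting $d = 0$ in the symmetry gives $T_L(0,s) = T_L(1-n,-s)$. By definition, the left hand side is supported only at $s = \tau(L)$, while the right hand side is supported only at $-s = \tau^*(L)$, i.e.\ at $s = -\tau^*(L)$. Comparing the unique nonzero values yields $\tau^*(L) = -\tau(L)$. (The fact that each side really has a single nonzero value uses the previously noted identification $\dim_{\F} \widehat{\mathcal{GH}}_d(L) = \dim_{\F} \widehat{\mathcal{GH}}_d(\bigcirc_n)$, which forces $\widehat{\mathcal{GH}}_0(L)$ and $\widehat{\mathcal{GH}}_{1-n}(L)$ to be one-dimensional.)

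For the knot case I would specialize $n = 1$: then $1 - n = 0$, so the gradings in the two defining conditions coincide and $\tau^*(K) = \tau(K)$ directly from the definitions. Combining with $\tau^*(K) = -\tau(K)$ forces $2\tau(K) = 0$ over $\Z$, hence $\tau(K) = 0$.

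There is no real obstacle here; the corollary is essentially a formal consequence of Proposition \ref{prop:mirror} (via Equation \eqref{mirror}) together with the invariance of $T$ under link equivalence. The only point worth stating explicitly is the well-definedness of $\tau^*$, but this is already guaranteed by the grading-wise dimension count from Subsection \ref{subsection:definitions}, so the whole argument is essentially a one-line substitution followed by reading off the symmetry on the appropriate rows.
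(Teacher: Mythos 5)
Your argument is correct and is exactly the formal consequence the paper intends: the paper states Corollary \ref{cor:mirror} without an explicit proof because, as you show, it follows directly by substituting $T_{L^*}=T_L$ (from link invariance of $T$) into Equation \eqref{mirror}, reading off the $d=0$ row to compare $\tau$ with $\tau^*$, and specializing to $n=1$ for the knot case. Your remark on the well-definedness of $\tau^*$ via the dimension count is also consistent with how the paper sets up the definitions.
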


\subsubsection{Connected sum}
Given two links $L_1$ and $L_2$, the function $T$ of the connected sum $L_1\# L_2$ is the convolution product of the
$T$ functions of $L_1$ and $L_2$; in other words
\begin{equation}
 \label{connected_sum}
 T_{L_1\# L_2}(d,s)=\sum_{\substack{d=d_1+d_2\\ s=s_1+s_2}}T_{L_1}(d_1,s_1)\cdot T_{L_2}(d_2,s_2)\:\:\:\:\:
 \text{for any }d,s\in\Z\:.
\end{equation}
This equation is very hard to prove in the grid diagram settings, but it has been proved quite easily using the 
holomorphic definition of link Floer Homology.
In fact Ozsv\'ath and Szab\'o proved that, 
if $D$ is a Heegaard diagrams for $L$ and $\widehat{CFL}(D)$ denotes the link Floer complex, 
there is a filtered chain homotopy equivalence
$$\widehat{CFL}(D_1)\otimes\widehat{CFL}(D_2)\longrightarrow\widehat{CFL}(D_1\# D_2)$$
which gives that $\widehat{\mathcal{GH}}(L_1\# L_2)\cong\widehat{\mathcal{GH}}(L_1)\otimes\widehat{\mathcal{GH}}(L_2)$.
See Section 7 in \cite{Ozsvath}.

We see immediately that the homology and the
$T$ function of $L_1\# L_2$ are independent from the choice of the components used to perform the connected sum;
moreover the $\tau$-invariant is additive: $$\tau(L_1\# L_2)=\tau(L_1)+\tau(L_2)\:.$$

\subsubsection{Disjoint union}
The disjoint union of two links $L_1$ and $L_2$ is equivalent to $L_1\#(L_2\sqcup\bigcirc)$. Thus by Equation
\eqref{unknot} and \eqref{connected_sum} we have the following relation:
\begin{equation}
 \label{union}
 T_{L_1\sqcup L_2}(d,s)=\sum_{\substack{d=d_1+d_2\\ s=s_1+s_2}}T_{L_1}(d_1,s_1)\cdot\big(T_{L_2}(d_2,s_2)+
 T_{L_2}(d_2+1,s_2)\big)\:\:\:\:\:\text{for any }d,s\in\Z\:,
\end{equation}
or in other words:
$\widehat{\mathcal{GH}}(L_1\sqcup L_2)\cong\widehat{\mathcal{GH}}(L_1)\otimes\widehat{\mathcal{GH}}(L_2)\otimes V$,
where $V$ is the two dimensional $\F$-vector space with generators in grading and minimal 
level $(d,s)=(-1,0)$ and $(d,s)=(0,0)$. We have immediately that 
$$\tau(L_1\sqcup L_2)=\tau(L_1\# L_2)=\tau(L_1)+\tau(L_2)\:.$$

\subsubsection{Quasi-alternating links}
We recall that quasi-alternating links are the smallest set of links $\mathcal Q$ that satisfies the two properties:
\begin{enumerate}
 \item The unknot is in $\mathcal Q$.
 \item $L$ is in $\mathcal Q$ if it admits a diagram with a crossing whose two resolutions
       $L_0$ and $L_1$ are both in $\mathcal Q$,
       $\text{det}(L_i)\neq 0$ and $\text{det}(L_0)+\text{det}(L_1)=\text{det}(L)$.
\end{enumerate}
The above definition and Lemma 3.2 in \cite{Ozsvath1} imply that
every quasi-alternating link is non-split and every non-split alternating link is quasi-alternating.
Moreover, quasi-alternating links are both Khovanov and link Floer homology thin, which means that their homologies
are supported in two and one lines respectively, and the homology is completely determined by the signature and the Jones 
(Alexander in the hat version of link Floer homology)
polynomial. The following proposition says that the same is true in filtered grid homology.
\begin{teo}
 \label{teo:quasi}
 If $L$ is an $n$-component quasi-alternating link then the function $T_L$ is supported in a line; more
 specifically the following relation holds:
 $$T_L(d,s)\neq0\:\:\:\:\:\text{if and only if}\:\:\:\:\:
 s=d+\frac{n-1-\sigma(L)}{2}\:\:\:\:\:\text{for }1-n\leq d\leq0$$
 where $\sigma(L)$ is the signature of $L$.
\end{teo}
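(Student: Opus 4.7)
The plan is to deduce Theorem \ref{teo:quasi} from the well-known thinness of link Floer homology for quasi-alternating links, by comparing $\widehat{\mathcal{GH}}(L)$ with the ordinary group $\widehat{GH}(L)$ through the spectral sequence of the filtered complex $\widehat{GC}(D)$.

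By \cite{Ozsvath1}, for every quasi-alternating $n$ component link $L$ the unfiltered link grid homology $\widehat{GH}_{d,s}(L)$ is concentrated on a single diagonal: it vanishes unless $s = d + (n-1-\sigma(L))/2$. In the setup of this paper, $\widehat{GH}_{d,s}(L)$ is by definition $H_{d,s}\bigl(\text{gr}(\widehat{GC}(D)),\text{gr}(\widehat{\partial})\bigr)$, i.e., the homology of the associated graded of the filtered complex whose homology is $\widehat{\mathcal{GH}}(L)$. The increasing filtration $\mathcal F$ on $\widehat{GC}(D)$ therefore produces a standard spectral sequence whose $E^1$-page is $\widehat{GH}(L)$ and whose $E^\infty$-page is the bigraded object $\text{gr}\bigl(\widehat{\mathcal{GH}}(L)\bigr)$, with $(d,s)$-component of dimension exactly $T_L(d,s)$. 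Since $E^\infty$ is a subquotient of $E^1$ in each bigrading, one obtains
$$T_L(d,s)\leq \dim_{\F}\widehat{GH}_{d,s}(L)\:,$$
so the support of $T_L$ is forced onto the same diagonal.

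For the converse direction (that $T_L(d,d+(n-1-\sigma(L))/2)$ is genuinely nonzero for every $1-n\leq d\leq 0$), I would combine this with the identity $\sum_s T_L(d,s) = \dim_{\F}\widehat{\mathcal{GH}}_d(L)$, which holds because the filtration on the finite dimensional space $\widehat{\mathcal{GH}}_d(L)$ is bounded. As already observed in Subsection \ref{subsection:definitions}, the Maslov grading and the differential $\widehat{\partial}$ do not depend on the $X$-markings, so $\widehat{\mathcal{GH}}(L)\cong_{\F}\widehat{\mathcal{GH}}(\bigcirc_n)$ as Maslov-graded vector spaces; in particular $\dim_{\F}\widehat{\mathcal{GH}}_d(L) = \binom{n-1}{-d}$, which is strictly positive for $1-n\leq d\leq 0$. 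All of this mass must sit at the unique $s$ on the thin line, and one in fact reads off $T_L\bigl(d,d+(n-1-\sigma(L))/2\bigr) = \binom{n-1}{-d}$ on that range, with $T_L$ vanishing everywhere else.

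The only genuinely nontrivial step is the first one — namely, passing from thinness of $\widehat{GH}(L)$ to thinness of $T_L$ through the spectral sequence. This is standard for an increasing filtered complex (cf. Appendix A.6 of \cite{Book}), and no difficulty should arise beyond checking that the grading conventions of \cite{Ozsvath1} match the grid-homology normalization $(M,A)$ used in this paper; after that, the Maslov counting already present in the paper completes the argument.
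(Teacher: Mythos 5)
Your proposal is correct and takes essentially the same approach as the paper: both reduce the claim to thinness of the bigraded grid homology $\widehat{GH}(L)$ (the paper cites Theorem~10.3.3 of \cite{Book}) and to the observation that $T_L(d,s)\neq 0$ forces $\widehat{GH}_{d,s}(L)\neq 0$. You merely make explicit two points the paper leaves terse: the spectral-sequence justification that the filtered homology's graded pieces are subquotients of the associated graded homology, and the dimension-count $\sum_s T_L(d,s)=\dim\widehat{\mathcal{GH}}_d(L)=\binom{n-1}{-d}$ establishing the ``if'' direction (a minor remark: the precise reference for link Floer thinness of quasi-alternating links is Manolescu--Ozsv\'ath rather than \cite{Ozsvath1} directly, but the paper's reference to \cite{Book} absorbs this).
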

\begin{proof}
 We already know that if $T_L(d,s)\neq0$ then $1-n\leq d\leq 0$, 
 so we only have to prove the alignment part of the statement.
 
 Take a grid diagram $D$ for $L$, then from  Theorem 10.3.3 in \cite{Book} the claim is true for the bigraded homology
 $H_{*,*}\left(\text{gr}\left(\widehat{GC}(D)\right)\right)\cong\widehat{GH}(L)$.
 Since $T_L(d,s)\neq0$ implies that $H_{d,s}\left(\text{gr}\left(\widehat{GC}(D)\right)\right)$ is non zero, the theorem 
 follows.
\end{proof}
From Theorem \ref{teo:quasi} we obtain immediately the following
corollary.  
\begin{cor}
 \label{cor:quasi}
 If $L$ is an $n$-component quasi-alternating link then
 $\tau(L)=\frac{n-1-\sigma(L)}{2}$ and $\tau(L^*)=n-1-\tau(L)$.
\end{cor}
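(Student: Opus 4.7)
The plan is to read off both identities directly from Theorem \ref{teo:quasi}, which already pins down exactly where $T_L$ is supported for a quasi-alternating link.

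For the first equality, recall that $\tau(L)$ was defined in Subsection \ref{subsection:definitions} as the unique integer $s$ such that $T_L(0,s) \neq 0$. Plugging $d=0$ into the alignment formula $s = d + \frac{n-1-\sigma(L)}{2}$ supplied by Theorem \ref{teo:quasi} immediately yields $\tau(L) = \frac{n-1-\sigma(L)}{2}$. There is nothing else to check here, since Theorem \ref{teo:quasi} already guarantees that this value of $s$ is the unique one for which $T_L(0,s)$ is non-zero.

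For the second equality I would give two equivalent routes and pick whichever reads more cleanly. The first route uses the fact that the class of quasi-alternating links is closed under mirror image (the defining inductive conditions are invariant under reflection) together with $\sigma(L^*) = -\sigma(L)$. Applying the first identity to $L^*$ gives $\tau(L^*) = \frac{n-1-\sigma(L^*)}{2} = \frac{n-1+\sigma(L)}{2}$, and a one-line addition shows $\tau(L) + \tau(L^*) = n-1$, which rearranges to $\tau(L^*) = n-1-\tau(L)$. The second route bypasses the mirror-closure remark by instead using Equation \eqref{tau_bar}, namely $\tau(L^*) = -\tau^*(L)$: plugging $d = 1-n$ into the alignment formula of Theorem \ref{teo:quasi} computes $\tau^*(L) = \frac{1-n-\sigma(L)}{2}$, and negating gives the same conclusion.

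There is no real obstacle: Theorem \ref{teo:quasi} does all the work, and the corollary is just specialisation of the support line to $d=0$ and $d=1-n$. The only minor point worth spelling out is the symmetry $\sigma(L^*) = -\sigma(L)$ (or, in the alternative route, the stability of the class $\mathcal{Q}$ under mirroring), which justifies the algebraic manipulation that produces the complementary relation $\tau(L^*) = n-1-\tau(L)$.
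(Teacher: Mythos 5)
Your proof is correct and matches the paper's intent: the paper simply states that the corollary follows immediately from Theorem~\ref{teo:quasi}, and you supply exactly the specialisations to $d=0$ and $d=1-n$ that make this explicit. Of your two routes for the second identity, the one via Equation~\eqref{tau_bar} is preferable since it stays entirely inside facts the paper has already established, whereas the first route additionally invokes closure of $\mathcal{Q}$ under mirroring, which the paper never states.
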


\section{Cobordisms}
\label{section:cobordism}
\subsection{Induced maps and degree shift}
\label{subsection:shift}
In this section we study the behaviour of the function $T$ under cobordisms. 
A genus $g$ cobordism between two links $L_1$ and $L_2$ 
is a smooth embedding 
$f:\Sigma_{g}\rightarrow S^3\times I$ where $\Sigma_{g}$ is a compact orientable surface of genus $g$ 
(more precisely $\Sigma_g$ has connected components $\Sigma_{g_1},...,\Sigma_{g_J}$ and it is $g=g_1+...+g_J$) such that
\begin{enumerate}
 \item $f(\partial\Sigma_{g})=(-L_1)\times\{0\}\sqcup L_2\times\{1\}$.
 \item $f(\Sigma_{g}\setminus\partial\Sigma_{g})\subset S^3\times(0,1)$.
 \item Every connected component of $\Sigma_{g}$ has boundary in both $L_1$ and $L_2$.
\end{enumerate}
In all the figures in this section cobordisms are drawn
as standard surfaces in $S^3$, but they can be knotted in $S^3\times I$.

Some of the induced maps that appear in this subsection come from the work of Sucharit Sarkar 
in \cite{Sarkar}; though the grading shifts are different, because Sarkar used a different definition of the Alexander grading,
ignoring the number of component of the link. 

It is a standard result in Morse theory that a link cobordism can be 
decomposed into five standard cobordisms. We find maps in homology for each case. From now on, given a link
$L_i$, we denote with $D_i$ one of its grid diagrams.
\begin{figure}[ht]
        \centering
        \def\svgwidth{5.5cm}
        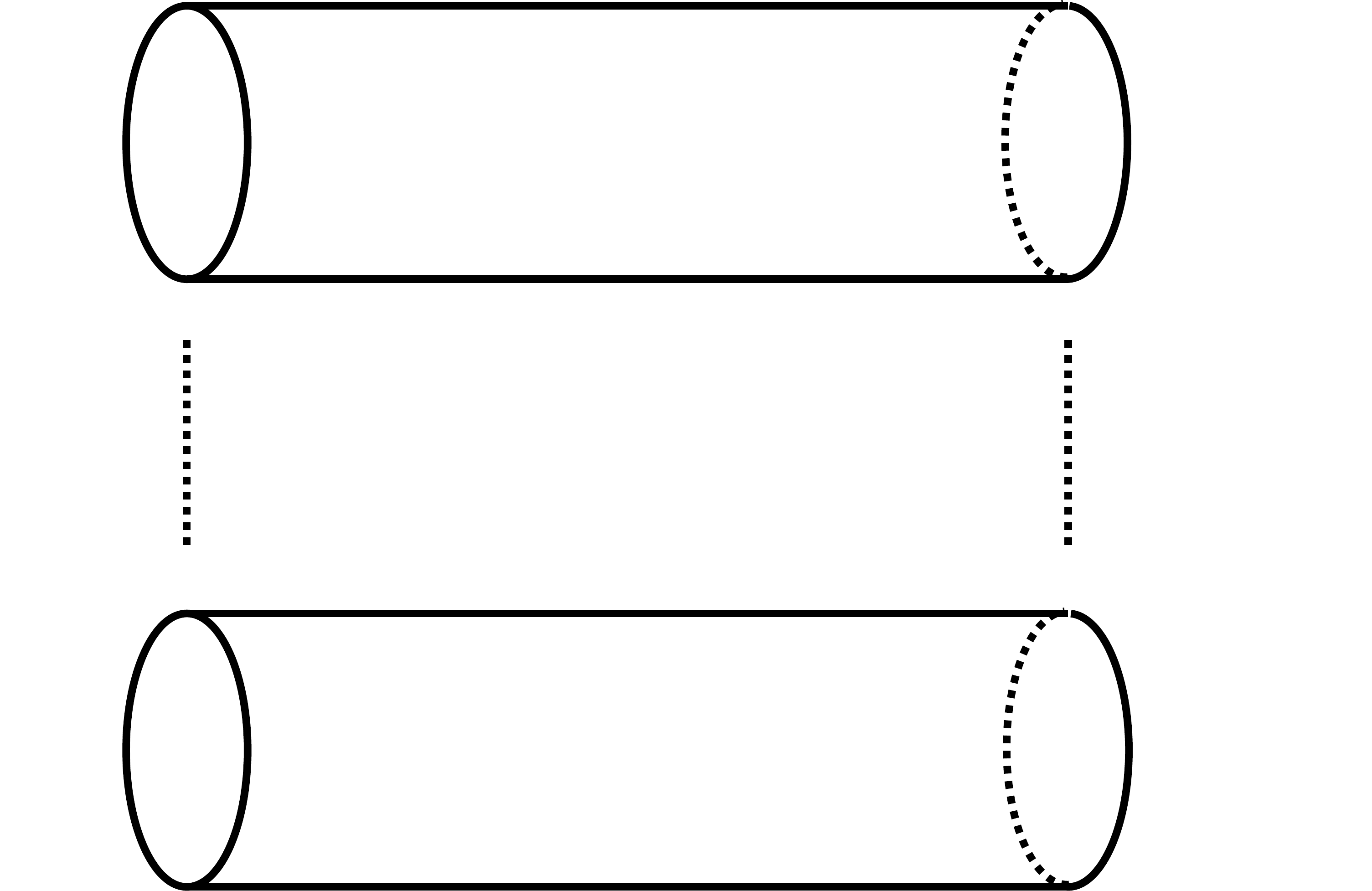   
        \caption{Identity cobordism}
        \label{Identity}
\end{figure}
\begin{enumerate}[i)]
 \item \emph{Identity cobordism}. This cobordism, with no critical points (Figure \ref{Identity}),
       represents a sequence of Reidemeister moves;
       in other words $L_1$ and $L_2$ are smoothly isotopic. 
       At the end of Section \ref{section:homology} we remarked that filtered homology is a link invariant; more precisely
       what we have is a filtered quasi-isomorphism 
       between $\widehat{GC}(D_1)$ and $\widehat{GC}(D_2)$. This, as we know, induces a filtered isomorphism in
       homology. 
       \begin{figure}[ht]      
  \begin{center}
   \begin{tikzpicture}[scale=0.625]
    \draw[help lines] (0,0) grid (2,2);
    \draw[help lines] (-2,0)--(0,0); \draw[help lines] (2,0)--(4,0);
    \draw[help lines] (-2,1)--(0,1); \draw[help lines] (0,1)--(4,1);
    \draw[help lines] (-2,2)--(0,2); \draw[help lines] (0,2)--(4,2);
    \draw[help lines] (0,-2)--(0,0); \draw[help lines] (0,0)--(0,4);
    \draw[help lines] (1,-2)--(1,0); \draw[help lines] (1,0)--(1,4);
    \draw[help lines] (2,-2)--(2,0); \draw[help lines] (2,0)--(2,4);
    \node at (1.5,0.5) {X}; \node at (0.5,1.5) {X};
    \draw (-1.5,1.5) circle [radius=0.3]; \draw (3.5,0.5) circle [radius=0.3];
    \draw (1.5,3.5) circle [radius=0.3];  \draw (0.5,-1.5) circle [radius=0.3];   
    \draw [thick][<-] (0.5,-1.25) -- (0.5,1.25); \draw [thick][<-] (0.25,1.5) -- (-1.25,1.5);
    \draw [thick][<-] (1.5,3.25) -- (1.5,0.75); \draw [thick][<-] (1.75,0.5) -- (3.25,0.5);  
    \draw [thin][->] (5,1) -- (6,1);
    \draw[help lines] (7,0)--(9,0);     \draw[help lines] (9,0)--(13,0);
    \draw[help lines] (7,1)--(9,1);     \draw[help lines] (9,1)--(13,1);
    \draw[help lines] (7,2)--(9,2);     \draw[help lines] (9,2)--(13,2);
    \draw[help lines] (9,-2)--(9,0);    \draw[help lines] (9,0)--(9,4);
    \draw[help lines] (10,-2)--(10,0);  \draw[help lines] (10,0)--(10,4);
    \draw[help lines] (11,-2)--(11,0);  \draw[help lines] (11,0)--(11,4);
    \node at (9.5,0.5) {X}; \node at (10.5,1.5) {X};
    \draw (7.5,1.5) circle [radius=0.3]; \draw (12.5,0.5) circle [radius=0.3];
    \draw (10.5,3.5) circle [radius=0.3];  \draw (9.5,-1.5) circle [radius=0.3];   
    \draw [thick][<-] (9.5,-1.25) -- (9.5,0.25); \draw [thick][<-] (10.25,1.5) -- (7.75,1.5);
    \draw [thick][<-] (10.5,3.25) -- (10.5,1.75); \draw [thick][<-] (9.75,0.5) -- (12.25,0.5);
   \end{tikzpicture}
  \end{center}
  \caption{Band move in a grid diagram}
  \label{Band}
 \end{figure}
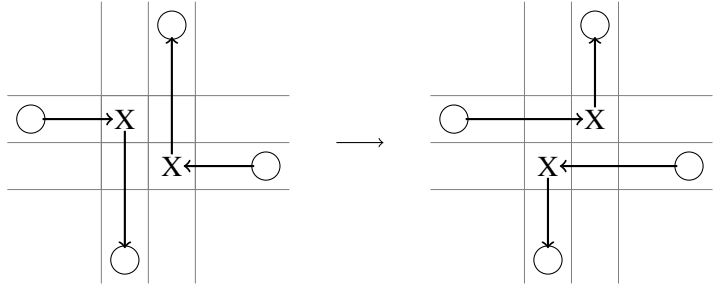
 \item \emph{Split cobordism}. This cobordism (right in Figure \ref{Saddle}) 
       represents a band move when $L_2$ has one more component than $L_1$. Take $D_1$ with a $2\times 2$
       square with two $X$-markings, one at the top-left and one at the bottom-right; then we claim that $D_2$ is obtained
       from $D_1$ by deleting this two $X$-markings and putting two new ones: at the top-right and the bottom-left, as 
       shown in Figure \ref{Band}. 
       \begin{figure}[ht]
        \centering
        \subfigure{
        \def\svgwidth{6cm}
        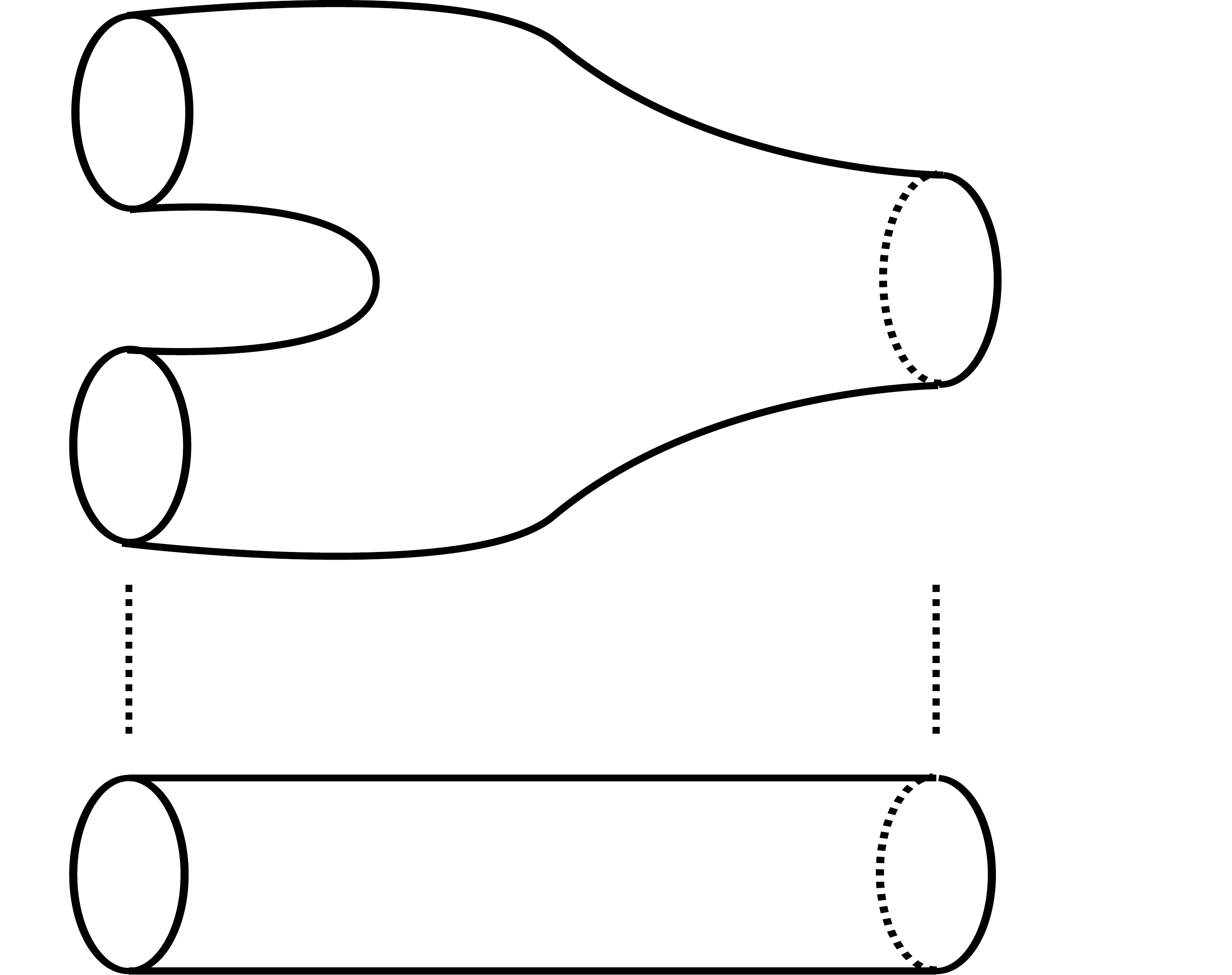}
        \hspace{0cm}
        \subfigure{
        \def\svgwidth{6cm}
        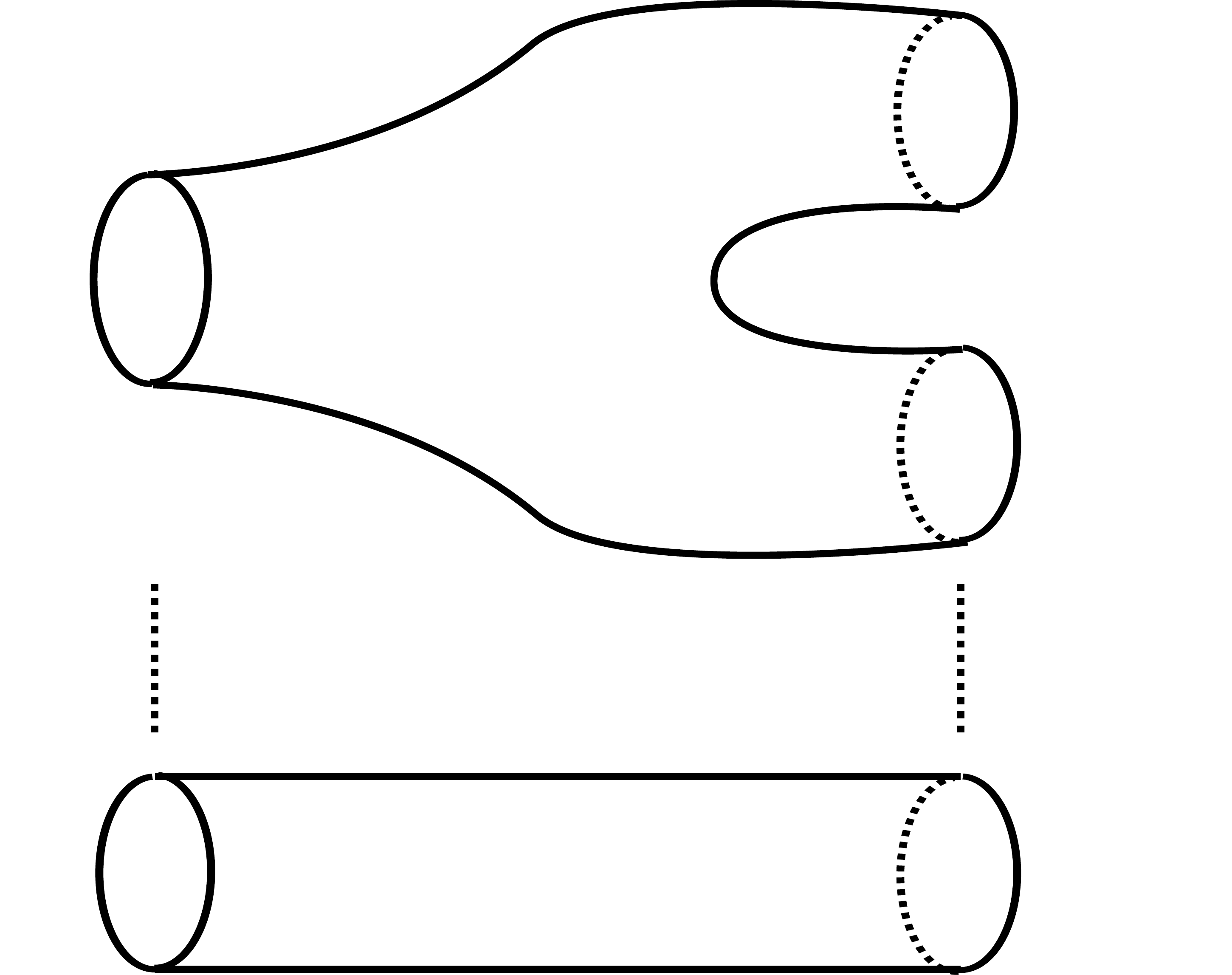}
        \caption{Merge and split cobordisms}
        \label{Saddle}
       \end{figure}
       In order to construct the complex $\widehat{GC}(D_2)$ we need to create one more special
       $O$-marking on the new component of $L_2$. To avoid this problem we first consider the identity map 
       in the filtered $\widetilde{GC}$ theory
       $$\text{Id}:\widetilde{GC}(D_1)\longrightarrow\widetilde{GC}(D_2)\:,$$
       which clearly is a chain map since now every $O$-marking is special; moreover it induces 
       an isomorphism in homology, that preserves the Maslov grading, and a direct computation gives that it is 
       filtered of degree 1.  
       
       Now we use Equation \eqref{tilde} and we get an isomorphism 
       $$\Phi_{\text{Split}}:\widehat{\mathcal{GH}}(L_1)\otimes W\longrightarrow\widehat{\mathcal{GH}}(L_2)$$ 
       that is a degree 1 filtered map which still preserves the Maslov grading. The $W$ factor appears because in 
       Equation \eqref{tilde} we take into account the size of $D_i$ and the number of components of $L_i$; while the 
       first quantity is the same for both diagrams the link $L_2$ has one more component than $L_1$. 
 \item \emph{Merge cobordism}. This cobordism (left in Figure \ref{Saddle})
       represents a band move when $L_1$ has one more component than $L_2$.
       We have an isomorphism
       $$\Phi_{\text{Merge}}:\widehat{\mathcal{GH}}(L_1)\longrightarrow\widehat{\mathcal{GH}}(L_2)\otimes W\:.$$
       The map is obtained in the same way as $\Phi_{\text{Split}}$ in the previous case, 
       but with the difference that now it is filtered of degree 0.
\end{enumerate}
\begin{figure}[ht]
        \centering
        \def\svgwidth{9cm}
        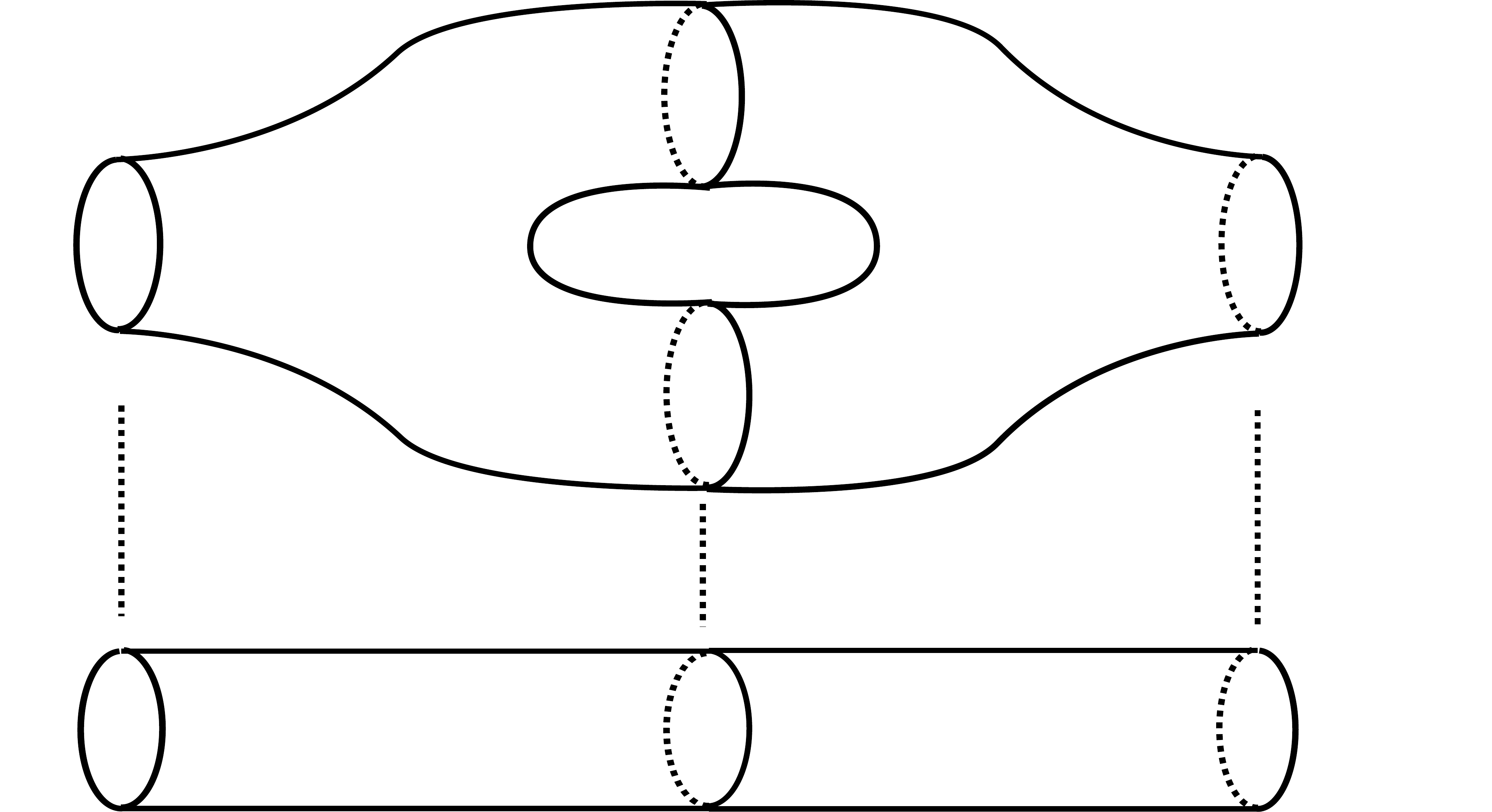   
        \caption{Torus cobordism}
        \label{Torus}
\end{figure}
Sometimes we are more interested in when a split and a merge cobordism appear together, the second just after the first,
in the shape of what we call a torus cobordism (Figure \ref{Torus}). We have the following proposition.
\begin{figure}[ht]
        \centering
        \def\svgwidth{6cm}
        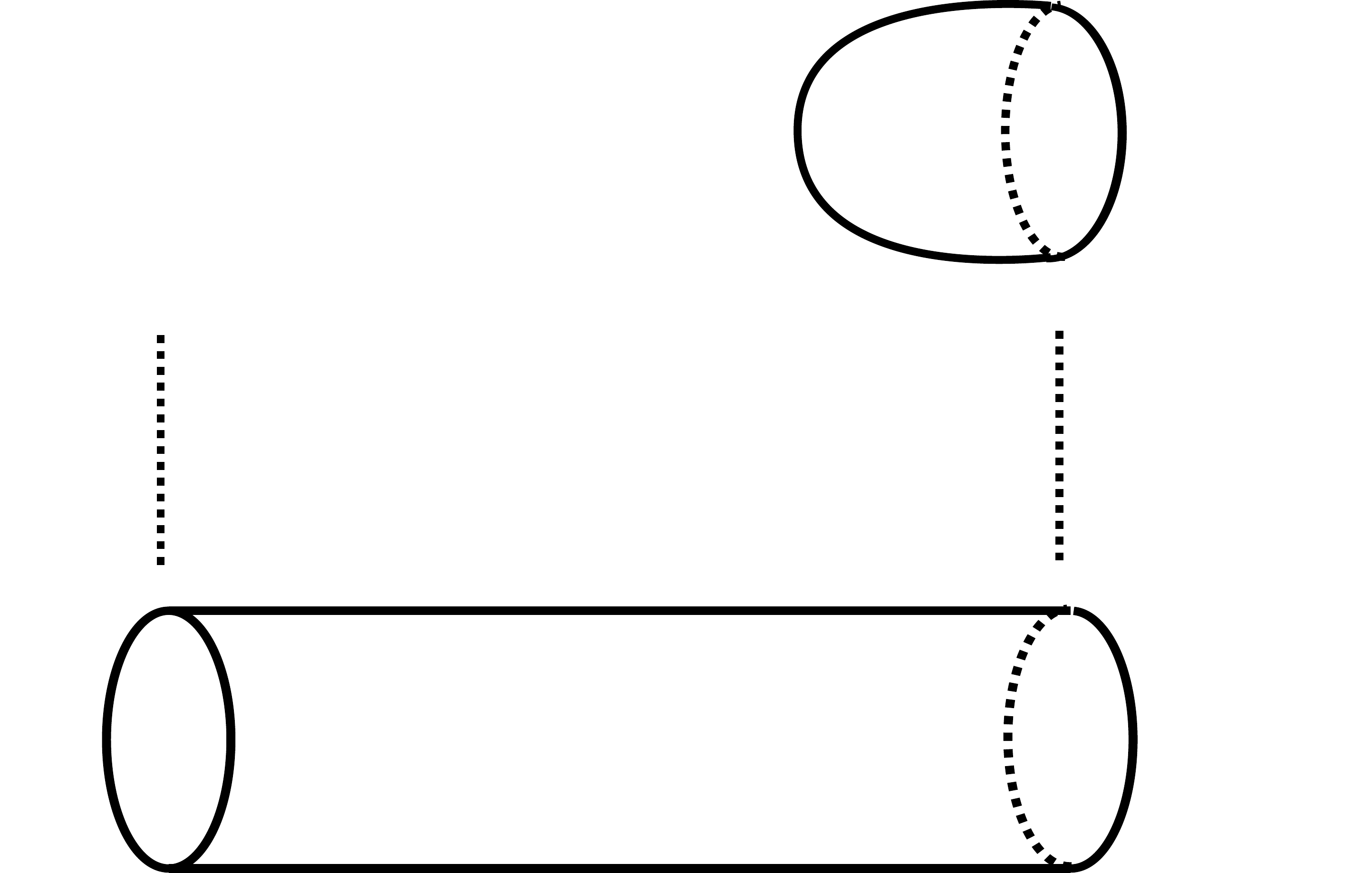   
        \caption{Birth cobordism}
        \label{Birth}
\end{figure}
\begin{prop}
 \label{prop:torus}
 Let $\Sigma$ be a torus cobordism between two links $L_1$ and $L_2$. Then $\Sigma$ induces a Maslov grading preserving
 isomorphism between $\widehat{\mathcal{GH}}(L_1)$ and $\widehat{\mathcal{GH}}(L_2)$, which is filtered of degree 1 .
\end{prop}
\begin{proof}
 We can choose $D_1$ in a way that the split and the merge band moves can be performed on two disjoint bands.
 Then we apply twice the move shown in Figure \ref{Band} and we take as map the identity. In this case the identity is a 
 chain map 
 because $L_2$ has the same number of components of $L_1$; this means that the special $O$-markings in $D_1$ and $D_2$ are
 the same and then the two differentials coincide.
 In this way we obtain an isomorphism in homology with Maslov grading shift and 
 filtered degree equal to the sum of the ones in ii) and iii).
\end{proof}
\begin{enumerate}[iv)]
 \item \emph{Birth cobordism}. A cobordism (Figure
       \ref{Birth}) representing a birth move.  
\end{enumerate}       
       Since our cobordisms have boundary in both $L_1$ and $L_2$, we can always assume that a birth move is followed 
       (possibly after some Reidemeister moves) by a merge move. Thus it is enough to define a map for the composition of these
       three cobordisms and this is what we do in the following proposition.
\begin{prop}
 \label{prop:birth}
 Let $\Sigma$ be a cobordism between two links $L_1$ and $L_2$ like the one 
 in Figure \ref{Birth1}. Then $\Sigma$ induces an 
 isomorphism $\Phi_{\text{Birth}}$ 
 between $\widehat{\mathcal{GH}}(L_1)$ and $\widehat{\mathcal{GH}}(L_2)$ that preserves the Maslov grading and
 it is filtered of degree 0.
\end{prop}
\begin{figure}[ht]
        \centering
        \def\svgwidth{9cm}
        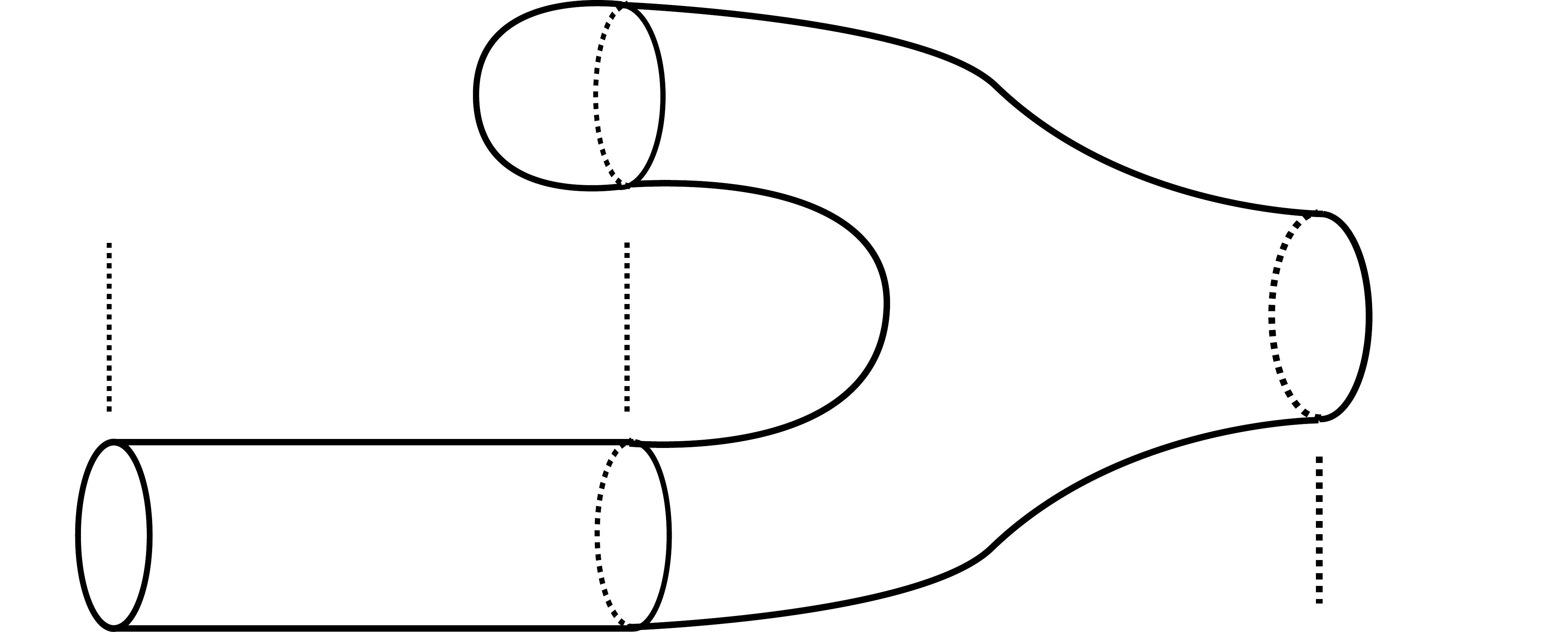   
        \caption{A more useful birth cobordism}
        \label{Birth1}
\end{figure}
\begin{proof}
The first step is to construct a map $s_1$ associated to the grid move shown in Figure \ref{birth_grid}; note that we add a 
normal $O$-marking, since later we merge the new unknot component with an already exisiting one.
Let us denote with $D_1'$ the stabilized diagram and with $c=\alpha\cap\beta$ the point in the picture; then we have the 
inclusion $i:S(D_1)\rightarrow S(D_1')$ that sends a grid state $x$ in $D_1$ to the grid state in $D_1'$ 
constructed
from $x$ by adding the point $c$. Then $s_1:\widehat{GC}(D_1)\rightarrow\widehat{GC}(D_1')$ is defined by the following 
formula:
$$s_1(x)=\sum_{y\in S(D_1')}\sum_{\substack{H\in\mathcal{SL}(i(x),y,c) \\ 
H\cap s\OO=\emptyset}}V_1^{n_1(H)}\cdot...\cdot V_m^{n_m(H)}y\:\:\:\:\:\text{for any }x\in S(D_1)$$
\begin{figure}[ht]      
  \begin{center}
   \begin{tikzpicture}[scale=0.625]
    \draw[help lines] (0,0.5)--(3,0.5); \draw[help lines] (0,0.5)--(0,3.5);
    \draw[help lines] (0,3.5)--(3,3.5); \draw[help lines] (3,0.5)--(3,3.5);
    \node at (1.5,2) {$D_1$}; 
    \draw [thin][->] (4,2) -- (5,2);
    \draw[help lines] (6,0)--(10,0); \draw[help lines] (6,0)--(6,4);
    \draw[help lines] (6,3)--(10,3); \draw[help lines] (9,0)--(9,4);
    \draw[help lines] (6,4)--(10,4); \draw[help lines] (10,0)--(10,4);
    \node at (7.5,1.5) {$D_1$}; 
    \node at (9.5,3.5) {X}; \draw (9.5,3.5) circle [radius=0.3]; 
    \draw[fill] (9,3) circle [radius=0.1];
    \node[left] at (6,3) {$\alpha$}; \node[below] at (9,0) {$\beta$};
    \node[above left] at (9,3) {$c$};
   \end{tikzpicture}
  \end{center}
  \caption{Birth move in a grid diagram}
  \label{birth_grid}
\end{figure}
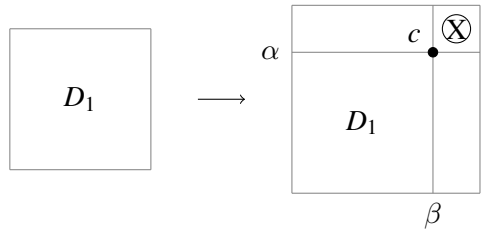
where $\mathcal{SL}(x,z,p)$ is the set of all the snail-like domains (the exact definition can be found in \cite{Book} 
Chapter 13) centered at $p$ joining $x$ to $z$, illustrated in
Figure \ref{snail};
$n_i(H)$ is the number of times $H$ passes through $O_i$ and $m$ is the number of the normal $O$-markings of $D_1$.
In \cite{Manolescu1} is proved that $s_1$ is a filtered quasi-isomorphism which induces a filtered isomorphism between
$\widehat{\mathcal{GH}}(L_1)\rightarrow\widehat{\mathcal{GH}}^{\OO}\left(L_1\sqcup\bigcirc\right)$; where the special
$O$-markings on $L_1\sqcup\bigcirc$ coincide with the ones on $L_1$ (the new unknotted component has a normal $O$-marking).
Moreover, in \cite{Sarkar} Sarkar showed that the map $s_1$ is filtered of degree 0. 

At this point we compose $s_1$ with the map $s_2$ given by the Reidemeister moves, which is a filtered
quasi-isomorphism for Theorem \ref{teo:new}, and finally with $s_3$,
the identity associated to the band move of Figure \ref{Band}. 
\begin{figure}[ht]
        \centering
        \def\svgwidth{16cm}
        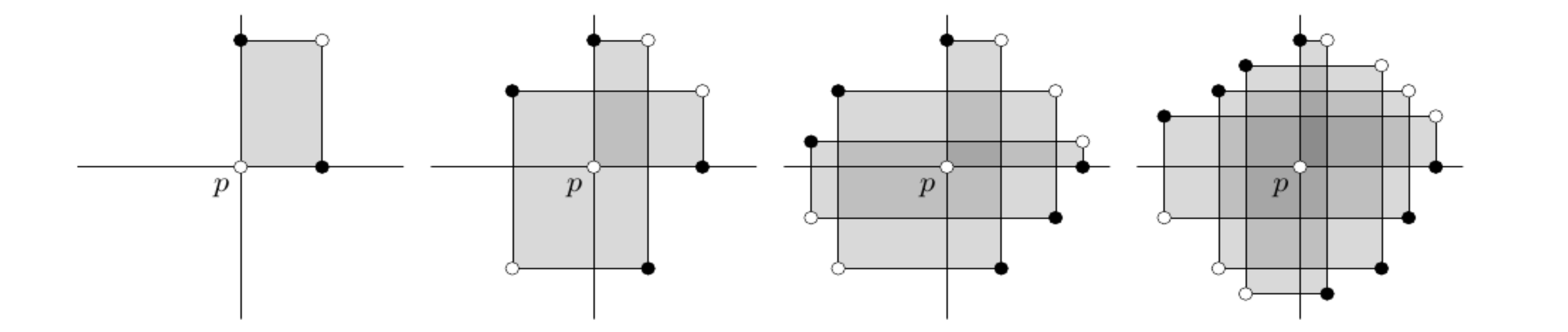   
        \caption{Some of the snail-like domains $\mathcal{SL}(x,z,p)$: the coordinates of $x$ and $z$ are represented
                by the white and black circles}
        \label{snail}
\end{figure}
The map $s_3$ induces an isomorphism 
$\widehat{\mathcal{GH}}^{\OO}\left(L_1\sqcup\bigcirc\right)\rightarrow\widehat{\mathcal{GH}}(L_2)$ that clearly preserves the 
Maslov grading and again we easy compute that it is filtered of degree 0.

Hence, the composition of these three maps that we defined induces the isomorphism in the claim.
\end{proof}
\begin{enumerate}[v)]
 \item \emph{Death cobordism}. This cobordism (Figure \ref{Death}) 
 represents a death move. Since this move can also be seen as a birth move between $L_2^*$ and
 $L_1^*$, we take the dual map of 
 $$\Phi_{\text{Birth}}:\widehat{\mathcal{GH}}(L_2^*)\longrightarrow\widehat{\mathcal{GH}}(L_1^*)$$
 which exists from Proposition \ref{prop:birth}; then $\Phi_{\text{Birth}}^*=\Phi_{\text{Death}}$, 
 by Proposition \ref{prop:mirror}, is a map
 between $\widehat{\mathcal{GH}}(L_1)$ and $\widehat{\mathcal{GH}}(L_2)$. Furthermore, it is still an isomorphism that is 
 filtered of degree 0 and preserves the Maslov grading.
 \begin{figure}[ht]
        \centering
        \def\svgwidth{9cm}
        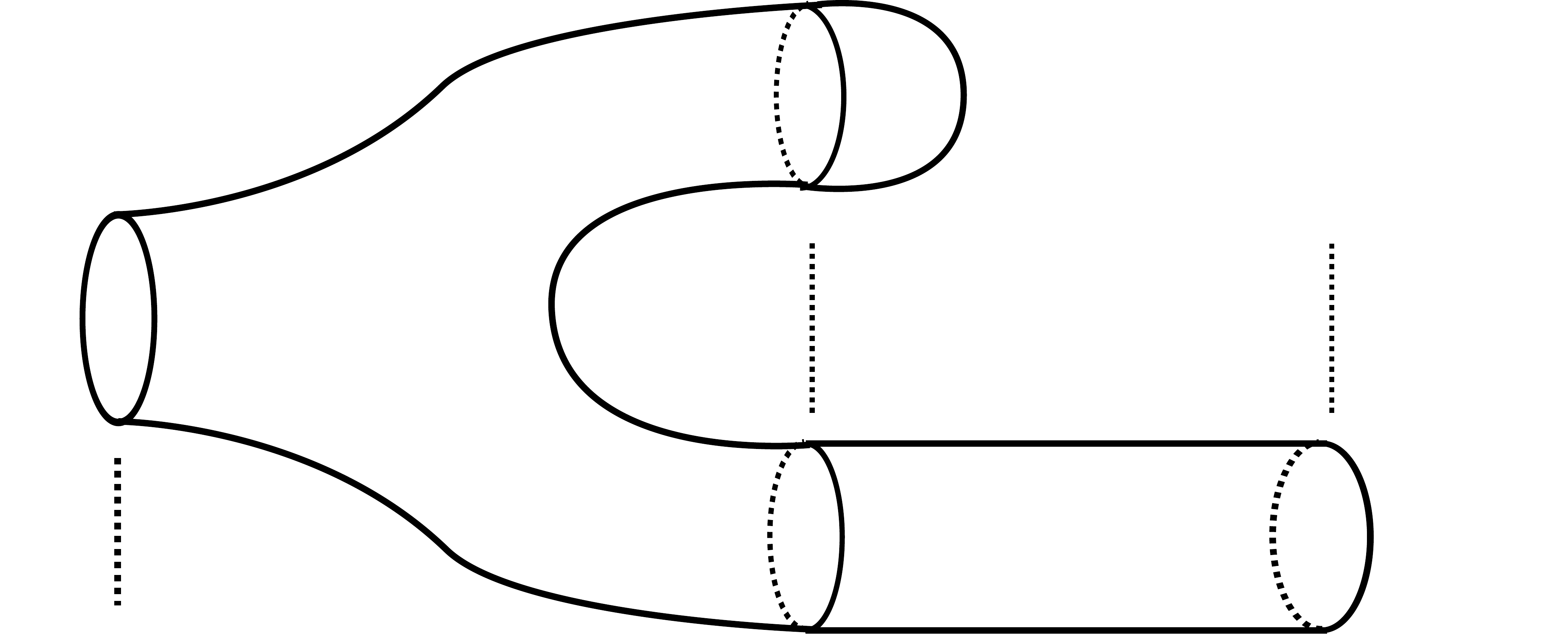   
        \caption{Death cobordism}
        \label{Death}
 \end{figure}
\end{enumerate}
The results for birth and death cobordisms in this section immediately give the following corollary.
\begin{cor}
 \label{cor:new}
 Suppose there is a birth or a death cobordism as in Figures \ref{Birth1} and \ref{Death} between two links $L_1$ and $L_2$. 
 Then we have that $\widehat{\mathcal{GH}}(L_1)\cong\widehat{\mathcal{GH}}(L_2)$.
\end{cor}

\subsection{Strong concordance invariance}
We want to prove Theorem \ref{teo:one}.  
We remark that a strong cobordism is a cobordism $\Sigma$, between two links with the same number of components, such that 
every connected component of 
$\Sigma$ is a knot cobordism between a component of the first link and one of the second link.
Moreover, if the connected components of $\Sigma$ are all annuli then we call $\Sigma$ a strong concordance.
We start by observing that Proposition \ref{prop:torus} leads to the following corollary.
\begin{cor}
 \label{cor:torus}
 Suppose there is a strong cobordism $\Sigma$ between $L_1$ and $L_2$ such that 
 $\Sigma$ is the composition of $g(\Sigma)$ torus
 cobordisms, not necessarily all of them belonging to the same component of $\Sigma$.
 Then $\Sigma$ induces an isomorphism between $\widehat{\mathcal{GH}}(L_1)$ 
 and $\widehat{\mathcal{GH}}(L_2)$, which is filtered of degree $g(\Sigma)$ and preserves the Maslov grading.
\end{cor}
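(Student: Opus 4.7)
The plan is to proceed by a direct induction on $g(\Sigma)$, using Proposition \ref{prop:torus} as the atomic step and simply composing the resulting isomorphisms. By hypothesis we may decompose
\[
\Sigma=\Sigma_{g(\Sigma)}\circ\cdots\circ\Sigma_{1},
\]
where each $\Sigma_i$ is a torus cobordism between intermediate links $L^{(i-1)}$ and $L^{(i)}$, with $L^{(0)}=L_1$ and $L^{(g(\Sigma))}=L_2$. Between two consecutive $\Sigma_i$'s one may need a sequence of Reidemeister moves in order to put the grid diagram in the position required to perform the next band-move pair; by item i) of Subsection \ref{subsection:shift} these identity cobordisms induce filtered isomorphisms of degree $0$ that preserve the Maslov grading, so they contribute neither to the filtration degree nor to the Maslov shift and can be absorbed into the torus maps.

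Next, I would apply Proposition \ref{prop:torus} to each $\Sigma_i$ individually, obtaining isomorphisms
\[
\Phi_{\Sigma_i}\colon \widehat{\mathcal{GH}}\bigl(L^{(i-1)}\bigr)\longrightarrow \widehat{\mathcal{GH}}\bigl(L^{(i)}\bigr)
\]
which are filtered of degree $1$ and preserve the Maslov grading. Note that Proposition \ref{prop:torus} is stated without any restriction on which component of $\Sigma$ carries the handle being created, so the remark in the statement that the torus cobordisms need not all lie on the same component of $\Sigma$ requires no extra argument. The composition
\[
\Phi_{\Sigma}:=\Phi_{\Sigma_{g(\Sigma)}}\circ\cdots\circ\Phi_{\Sigma_{1}}
\]
is an isomorphism as a composition of isomorphisms, preserves the Maslov grading since each factor does, and is filtered of degree $\sum_{i=1}^{g(\Sigma)}1=g(\Sigma)$ because the filtration degrees of filtered maps add under composition.

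The verification that the hypotheses i)--iii) defining a strong cobordism are preserved throughout the decomposition is straightforward: each torus cobordism preserves the number of components and, being built out of a split followed by a merge on the same component, cannot separate a component of $\Sigma$ into pieces lying entirely in $L_1$ or entirely in $L_2$. Consequently no real obstacle arises; the only genuine point requiring care is making sure that the intermediate identity cobordisms induced by Reidemeister moves between successive torus steps do not contaminate the filtered degree, which is precisely what was established in item i) of Subsection \ref{subsection:shift}.
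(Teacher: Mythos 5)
Your proof is correct and follows the same route the paper intends: the paper offers no explicit argument for this corollary, simply stating that it ``leads'' from Proposition \ref{prop:torus}, and the intended content is exactly the composition-of-filtered-isomorphisms argument you spell out, with filtered degrees adding to $g(\Sigma)$. The extra remarks about absorbing Reidemeister moves and about which component carries each handle are harmless elaborations of what the paper leaves implicit.
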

Now, if we have an isomorphism $F:\widehat{\mathcal{GH}}(L_1)\rightarrow\widehat{\mathcal{GH}}(L_2)$ that preserves the
Maslov grading and it is filtered of degree $t$, which means that 
there are inclusions 
$F\left(\mathcal F^s\widehat{\mathcal{GH}}_d(L_1)\right)\subset\mathcal F^{s+t}\widehat{\mathcal{GH}}_d(L_2)$ for every 
$d,s\in\Z$, then $\tau(L_2)\leq\tau(L_1)+t$.
Hence we can prove the following theorem that immediately implies the invariance statement.
\begin{teo}
 \label{teo:strong}
 Suppose that $\Sigma$ is a strong cobordism between two links $L_1$ and $L_2$. Then
 $$\big|\tau(L_1)-\tau(L_2)\big|\leq g(\Sigma)\:.$$
 Furthermore, if $L_1$ and $L_2$ are strongly concordant
 then $\widehat{\mathcal{GH}}(L_1)\cong\widehat{\mathcal{GH}}(L_2)$.
\end{teo}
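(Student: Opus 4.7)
The plan is to reduce the theorem to the results of Subsection~\ref{subsection:shift} by decomposing $\Sigma$ into the elementary cobordisms treated there and composing their induced maps. By Morse theory I would first present $\Sigma$ as a sequence of identity, birth, merge, split and death pieces. Because $\Sigma$ is strong, each connected component $\Sigma_{g_i}$ is a cobordism between a single pair of knot components and satisfies $\chi(\Sigma_{g_i})=-2g_i$. The goal of the rearrangement is to pair every birth with an immediately following merge, obtaining a cobordism of the type handled by Proposition~\ref{prop:birth}, to dually pair every split with an immediately following death, and to group the remaining $2g(\Sigma)$ saddles into $g(\Sigma)$ torus cobordisms as in Proposition~\ref{prop:torus} (these tori are allowed to sit in different components of $\Sigma$, so Corollary~\ref{cor:torus} applies to them collectively).

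Next I would compose the induced maps from Subsection~\ref{subsection:shift} into a single morphism
$$F_\Sigma : \widehat{\mathcal{GH}}(L_1) \longrightarrow \widehat{\mathcal{GH}}(L_2)\:.$$
Each elementary piece is an isomorphism preserving the Maslov grading. The identity pieces, the birth-then-merge pieces (Proposition~\ref{prop:birth}) and their split-then-death duals all contribute filtered degree $0$, while each of the $g(\Sigma)$ torus cobordisms contributes filtered degree $1$ by Proposition~\ref{prop:torus}. Summing the degrees, $F_\Sigma$ is a Maslov-preserving isomorphism filtered of degree $g(\Sigma)$; this generalizes Corollary~\ref{cor:torus} from pure torus-compositions to arbitrary strong cobordisms.

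The first assertion then follows from the remark immediately preceding the theorem statement: any Maslov-preserving isomorphism filtered of degree $t$ forces $\tau(L_2)\leq\tau(L_1)+t$. Applied to $F_\Sigma$ this gives $\tau(L_2)\leq\tau(L_1)+g(\Sigma)$, and applied to the reversed cobordism $\overline{\Sigma}\colon L_2\to L_1$, which is still strong and of the same genus, it gives the opposite inequality, so $|\tau(L_1)-\tau(L_2)|\leq g(\Sigma)$. When $\Sigma$ is a strong concordance we have $g(\Sigma)=0$, so both $F_\Sigma$ and its reverse are filtered isomorphisms of degree $0$ preserving the Maslov grading; combining them yields $\mathcal F^s\widehat{\mathcal{GH}}_d(L_1)\cong\mathcal F^s\widehat{\mathcal{GH}}_d(L_2)$ for every $d,s\in\Z$, i.e. $\widehat{\mathcal{GH}}(L_1)\cong\widehat{\mathcal{GH}}(L_2)$ as filtered graded $\F$-vector spaces.

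The main obstacle I anticipate is the Morse-theoretic rearrangement step at the start. One must check that the strong condition on $\Sigma$ really does allow a handle decomposition in which every index-$0$ critical point can be cancelled, via handle slides, against a neighboring saddle inside the same component (and dually for index-$2$ critical points), so that the only unpaired saddles are the $2g(\Sigma)$ that assemble into torus cobordisms. This is a handle-slide argument on each $\Sigma_{g_i}$ separately, exploiting the fact that each component has exactly two boundary circles, one lying in $L_1$ and one in $L_2$, so that the newly born unknots always have a neighboring strand (in the same component) with which to merge.
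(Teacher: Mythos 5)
Your proposal follows essentially the same argument as the paper: decompose $\Sigma$ into identity, birth-then-merge, torus, and split-then-death pieces, use Propositions~\ref{prop:birth}, \ref{prop:torus} and Corollary~\ref{cor:torus} to see that each induces a Maslov-preserving isomorphism, track filtered degrees (zero except for the $g(\Sigma)$ torus pieces), and run the cobordism backwards for the reverse inequality. The Morse-theoretic rearrangement you flag as the main obstacle is precisely what the paper delegates to Proposition~B.5.1 of \cite{Book} (with Lemma~B.5.3 giving the normal form of the saddle region), so your caution there is legitimate but already addressed by a citation rather than a new argument.
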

\begin{proof}
 By Proposition B.5.1 in \cite{Book} we can suppose that, 
 in $\Sigma$, 0-handles come before 1-handles while 2-handles come later; moreover we can say that 
 $\Sigma$ is the composition of birth, torus and death cobordisms (and obviously some identity cobordisms).
 Each of these induces an isomorphism in homology that also respects the Maslov grading.
 
 For the first part, we only need to check what is the filtered degree of the isomorphism between 
 $\widehat{\mathcal{GH}}(L_1)$ and $\widehat{\mathcal{GH}}(L_2)$, obtained by the composition of all the induced maps on
 each piece of $\Sigma$. Birth, death and identity are filtered of degree 0, while, from Corollary \ref{cor:torus},
 the torus cobordisms are filtered of degree $g(\Sigma)$. Then we obtain
 $$\tau(L_2)\leq\tau(L_1)+g(\Sigma)\:.$$ 
 For the other inequality we consider the same cobordism, but this time from $L_2$ to $L_1$.
 
 Now, for the second part, we observe that now there are no torus cobordisms
 and then the claim follows from Corollary \ref{cor:new}.
\end{proof}

\subsection{A lower bound for the slice genus}
Suppose $\Sigma$ is a cobordism (not necessarily strong) between two links $L_1$ and $L_2$. Denote with 
$\Sigma_1,...,\Sigma_J$ the connected components of $\Sigma$. For $i=1,2$ we define the integers $l_i^k(\Sigma)$ as
the number of components of $L_i$ that belong to $\Sigma_k$ minus 1; in particular $l_i^k(\Sigma)\geq 0$ for any $k,i$.
Finally, we say that $l_i(\Sigma)=\displaystyle\sum_{k=1}^{J}l_i^k(\Sigma)=n_i-J$, where $n_i$ is the number of component 
of $L_i$. For example, if $\Sigma$ is the cobordism in Figure \ref{Big} then $J=2$, ordering $\Sigma_1$ and
$\Sigma_2$ from above to bottom, we have that $l_1(\Sigma)=3$ and
$l_2(\Sigma)=4$; while $l_1^1(\Sigma)=2$, $l_1^2(\Sigma)=1$, $l_2^1(\Sigma)=3$ and $l_2^2(\Sigma)=1$.
We have the following lemma.
\begin{lemma}
 \label{lemma:normal}
 If $\Sigma$ has no 0,2-handles then, up to rearranging 1-handles, we can suppose that $\Sigma$ is like in Figure \ref{Big}:
 there are $l_1(\Sigma)$ merge cobordisms between $(0,t_1)$, $l_2(\Sigma)$ split cobordisms between $(t_2,1)$ and 
 $g(\Sigma)$ torus cobordisms between $(t_1,t_2)$. We have no other 1-handles except for the ones we considered
before.
\end{lemma}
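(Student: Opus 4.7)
The strategy is to argue one connected component of $\Sigma$ at a time. Because the components $\Sigma_1,\dots,\Sigma_J$ of $\Sigma$ lie in disjoint regions of $S^3\times I$, we can put each into normal form separately and then interleave their heights arbitrarily. Fix such a component $\Sigma_k$: it is a compact orientable surface of genus $g_k$ with $a_k=l_1^k(\Sigma)+1$ boundary circles on $L_1$ and $b_k=l_2^k(\Sigma)+1$ boundary circles on $L_2$. Since $\Sigma$ has no $0$- or $2$-handles, the number of index-$1$ critical points on $\Sigma_k$ equals $-\chi(\Sigma_k)=2g_k+a_k+b_k-2=l_1^k(\Sigma)+l_2^k(\Sigma)+2g_k$, which matches precisely the required count of $l_1^k(\Sigma)$ merges, $l_2^k(\Sigma)$ splits, and $g_k$ torus pairs.

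Next I would build, on the abstract topological model of $\Sigma_k$, a Morse function $f_k$ realising the desired decomposition: use $a_k-1$ saddles to successively merge the $a_k$ input circles into a single circle $C$; insert $g_k$ ``torus pairs'' of saddles, each one a split followed immediately by a merge attached to $C$, to create the genus; and finish with $b_k-1$ saddles that split $C$ into the $b_k$ output circles. The resulting surface has the correct genus and boundary data, so it is diffeomorphic to $\Sigma_k$ rel boundary.

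The remaining point is to promote this abstract Morse structure to the embedded picture. By standard Cerf theory, any two Morse functions on a fixed compact surface that agree on the boundary and have the same number of critical points at each index are connected by a path of Morse functions whose only generic transitions are handle slides and exchanges of critical values between critical points of equal index; in particular, no critical points of index $0$ or $2$ are ever introduced. Lifting to the embedding $\Sigma_k\subset S^3\times I$, this produces an ambient isotopy rel boundary placing the induced height function into the form of $f_k$. Running the argument in each $\Sigma_k$ and then reshuffling the heights across components yields the normal form depicted in Figure \ref{Big}. The main obstacle to watch carefully is this last Cerf-theoretic step, which relies on the hypothesis of no $0$- or $2$-handles to preclude any accidental birth-death events; the counting and the construction of $f_k$ are then routine.
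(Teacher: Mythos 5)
The overall architecture of your argument mirrors the paper's: work one connected component at a time, count $1$-handles via Euler characteristic, and rearrange the saddles into merges, then torus pairs, then splits. But the step where you ``promote this abstract Morse structure to the embedded picture'' is precisely where the real content lives, and your justification for it does not work as stated.

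First, the Cerf-theoretic claim you invoke is not a theorem: Cerf theory guarantees that a \emph{generic} path between two Morse functions has only birth-death and crossing singularities, not that one can choose a path that \emph{avoids} birth-deaths whenever the two endpoints have the same number of critical points in each index. Eliminating births and deaths requires additional argument, and in general it cannot be done. Second, and more fundamentally, even granting a nice path of abstract Morse functions on $\Sigma_k$, ``lifting to the embedding'' is not automatic: what you need is an \emph{ambient isotopy} of $S^3\times I$ rel boundary that carries the embedded surface to a position where the projection to $I$ equals $f_k$. This is exactly the nontrivial normal-form theorem for surfaces in $4$-space, and the paper handles it by citing Proposition B.5.1 of Ozsv\'ath--Stipsicz--Szab\'o (which lets one assume all band moves are performed on \emph{disjoint} bands, so the saddles commute ambiently and may be reordered at will) together with Lemma B.5.3 (the torus-cobordism normal form for the middle knot cobordism), ultimately resting on the normal-form results of Kawauchi--Shibuya--Suzuki. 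In the paper these are the load-bearing citations; your proof replaces them with an appeal to Cerf theory that does not, by itself, produce the required ambient isotopy. The Euler characteristic count and the observation that merges can be done first and splits last (leaving a knot cobordism in the middle) are correct and match the paper, but the reordering step needs to be grounded in the disjoint-bands statement rather than in abstract Cerf theory.
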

\begin{figure}[ht]
        \centering
        \def\svgwidth{14cm}
        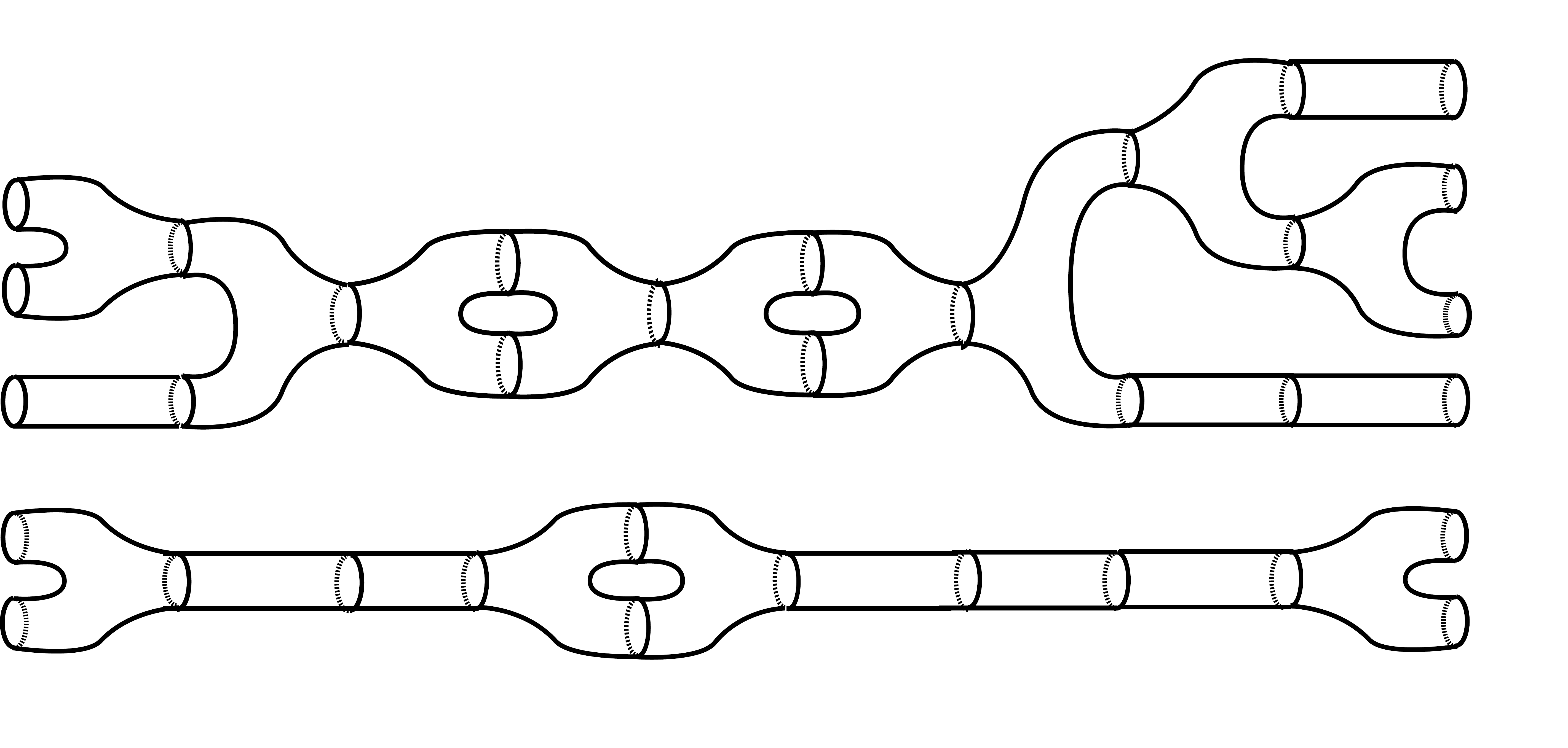   
        \caption{}
        \label{Big}
\end{figure}
\begin{proof}
 We consider a connected component $\Sigma_k$, which is a cobordism between $L_1^k$ and $L_2^k$, and we fix a Morse 
 function $f:S^3\times[0,1]\rightarrow[0,1]$. After some Reidemeister
 moves, by Proposition B.5.1
 we can assume that all the band moves are performed on disjoint bands, in particular we can apply them in every
 possible order.
 
 Since $\Sigma_k$ has boundary in both $L_1^k$ and $L_2^k$ by the definition of cobordism given at beginning
 of Section \ref{section:cobordism}, if we take an ordering for the components of $L_1^k$, we can find
 a merge cobordism joining the first and the second component of $L_1^k$ at some point $\overline t$ in $(0,1)$; we assume 
 that the associated band move is the first we apply on $L_1^k$. Now we just do the same thing on the other components,
 but taking the new component instead of the first two. In this way 
 we have that there is a $t_1\in(0,1)$ such that $\Sigma_k\cap f^{-1}[0,t_1]$ is composed
 by $l_1^k(\Sigma)$ merge cobordisms and $\Sigma_k\cap f^{-1}(t_1)$ is a knot.
 
 In the same way we find that, for a certain $t_2\in(0,1)$, the cobordism $\Sigma_k\cap f^{-1}[t_2,1]$ is composed by 
 $l_2^k(\Sigma)$ split cobordisms and $\Sigma_k\cap f^{-1}(t_2)$ is a knot.
 
 At this point $\Sigma_k\cap f^{-1}[t_1,t_2]$ is a knot cobordism of genus $g(\Sigma_k)$ and 
 from Lemma B.5.3 in \cite{Book} (see also \cite{Kawauchi}) we can rearrange
 the saddles to obtain a composition of $g(\Sigma_k)$ torus cobordisms like in Figure \ref{Big}.
 
 To see that there are no other 1-handles left it is enough to compute the Euler characteristic of $\Sigma_k$:
 $$2-2g(\Sigma_k)-l_1^k(\Sigma)-1-l_2^k(\Sigma)-1=\chi(\Sigma_k)=-\#|\text{1-handles}|\:.$$
 This means that the number of 1-handles in $\Sigma_k$ is precisely $2g(\Sigma_k)+l_1^k(\Sigma)+l_2^k(\Sigma)$. 
\end{proof}
From Figure \ref{Big} we realize that merge and split cobordisms can appear alone in $\Sigma$ and not always in pair like
in strong cobordisms. In case ii) and iii) of Subsection \ref{subsection:shift} we see that they do not induce 
isomorphisms in homology, but we find maps $\Phi_{\text{Split}}$ and $\Phi_{\text{Merge}}$ 
that are indeed isomorphisms if restricted to 
$\widehat{\mathcal{GH}}_0(L_1)\rightarrow\widehat{\mathcal{GH}}_0(L_2)$; 
moreover, the filtered degree is 1 for split cobordisms and 0 for merge cobordisms. Since we are looking for informations
on $\tau$, this is enough for our goal and then we can prove the following inequality.
\begin{prop}
 \label{prop:bound}
 Suppose $\Sigma$ is a cobordism between two links $L_1$ and $L_2$. Then 
 $$\big|\tau(L_1)-\tau(L_2)\big|\leq g(\Sigma)+\text{max }\{l_1(\Sigma),l_2(\Sigma)\}\:.$$
\end{prop}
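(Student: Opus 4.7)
The plan is to bring $\Sigma$ into the standard form supplied by Lemma \ref{lemma:normal} and then track a generator of $\widehat{\mathcal{GH}}_0(L_1)$ through the composition of the induced cobordism maps. Recall from Subsection \ref{subsection:definitions} that $\widehat{\mathcal{GH}}_0(L)$ is always one-dimensional over $\F$, and that $\tau(L)$ is the unique filtration level at which its generator first appears.

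First I reduce to the case where $\Sigma$ has no $0$- or $2$-handles. After some Reidemeister moves I pair each $0$-handle with a subsequent merge to form a birth cobordism in the sense of Proposition \ref{prop:birth}, and each $2$-handle with a preceding split to form a death cobordism in the sense of case v) of Subsection \ref{subsection:shift}. Both types induce filtered isomorphisms of degree $0$ preserving the Maslov grading, so they affect neither the filtration level of the images nor the counts $g(\Sigma)$, $l_1(\Sigma)$, $l_2(\Sigma)$. Lemma \ref{lemma:normal} then applies componentwise, so up to identity cobordisms $\Sigma$ is the composition of $l_1(\Sigma)$ merges, then $g(\Sigma)$ torus cobordisms, then $l_2(\Sigma)$ splits.

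Composing the induced maps from cases ii), iii) and Proposition \ref{prop:torus} produces, when $l_1(\Sigma)\geq l_2(\Sigma)$, a filtered map
$$\Phi_\Sigma\colon \widehat{\mathcal{GH}}(L_1)\longrightarrow\widehat{\mathcal{GH}}(L_2)\otimes W^{\otimes(l_1(\Sigma)-l_2(\Sigma))}$$
of total filtered degree $0\cdot l_1(\Sigma)+1\cdot g(\Sigma)+1\cdot l_2(\Sigma)=g(\Sigma)+l_2(\Sigma)$; the case $l_1(\Sigma)<l_2(\Sigma)$ is analogous with the surplus $W$-factors on the source. Restricting $\Phi_\Sigma$ to Maslov grading $0$ gives a filtered isomorphism between one-dimensional $\F$-vector spaces, since $W_0$ is concentrated in Maslov grading $0$ and each $\widehat{\mathcal{GH}}_0(L_i)$ is one-dimensional. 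Taking $\alpha$ to be a generator of $\mathcal F^{\tau(L_1)}\widehat{\mathcal{GH}}_0(L_1)$, its image is a nonzero element of filtration level at most $\tau(L_1)+g(\Sigma)+l_2(\Sigma)$. Because $W_0$ sits at filtration level $0$, the filtration level of the image coincides with that of its $\widehat{\mathcal{GH}}_0(L_2)$ factor, which is precisely $\tau(L_2)$. Therefore $\tau(L_2)\leq \tau(L_1)+g(\Sigma)+l_2(\Sigma)$.

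Reversing $\Sigma$ yields a cobordism from $L_2$ to $L_1$ with the same genus and with $l_1$ and $l_2$ interchanged, so the same argument gives $\tau(L_1)\leq \tau(L_2)+g(\Sigma)+l_1(\Sigma)$, and the two bounds combine to the claim. The main technical delicacy is that individual merge and split maps are not isomorphisms on the full homology—they introduce or absorb $W$-factors—so I must check carefully that the auxiliary $W$-factors accumulated along the composition all sit at filtration level $0$ in Maslov grading $0$, ensuring that they contribute nothing to the filtration-level estimate for $\tau$.
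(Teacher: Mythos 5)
Your proposal is correct and follows essentially the same approach as the paper: reduce to the case with no $0$- or $2$-handles by absorbing them into birth/death cobordisms (Proposition \ref{prop:birth} and case v) of Subsection \ref{subsection:shift}), apply Lemma \ref{lemma:normal}, and track the generator of the one-dimensional $\widehat{\mathcal{GH}}_0$ through the composition of merge (degree $0$), torus (degree $1$), and split (degree $1$) maps. The paper is terser — it simply notes that each constituent map is an isomorphism in Maslov grading $0$ and adds the filtered degrees — whereas you spell out the composed map with its auxiliary $W$-factors, but the underlying argument and the resulting inequalities $\tau(L_2)\leq\tau(L_1)+g(\Sigma)+l_2(\Sigma)$ and its reverse are identical.
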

\begin{proof}
 By Theorem \ref{teo:strong},
 we can suppose that there are no 0 and no 2-handles in $\Sigma$.
 We can also assume
 that $\Sigma$ is like in Lemma \ref{lemma:normal}.
 
 All of these
 cobordisms induce isomorphisms of the homology in Maslov grading 0. The number
 of torus cobordisms is $g(\Sigma)$ while the number of split cobordisms 
 (that are not part of torus cobordisms)
 is $l_2(\Sigma)$. This means that
 $$\tau(L_2)\leq\tau(L_1)+g(\Sigma)+l_2(\Sigma)\:.$$
 Now we do the same, but considering the cobordism going from $L_2$ to $L_1$, as we did in the proof of Theorem 
 \ref{teo:strong}. We obtain that 
 $$\tau(L_1)\leq\tau(L_2)+g(\Sigma)+l_1(\Sigma)\:.$$
 Putting the two inequalities together proves the relation in the statement of the theorem.
\end{proof}
If $L$ is an $n$-component link, from Proposition \ref{prop:bound} we have immediately Equation \eqref{bound}:
\begin{equation*}
 |\tau(L)|+1-n\leq g_4(L)
\end{equation*}
which, as we already said,
is a lower bound for the slice genus of our link. Indeed, we can say more by using Equation \eqref{tau_bar}
and observing that $g_4(L^*)=g_4(L)$:
\begin{equation*}
 \text{max }\big\{|\tau(L)|,|\tau^*(L)|\big\}+1-n\leq g_4(L)\:.
\end{equation*}

\section{The \texorpdfstring{$\mathcal{GH}^-$}{Lg} version of filtered grid homology}
\label{section:minus}
\subsection{A different point of view}
The collapsed filtered complex $cGC^-(D)$ for a grid diagram $D$ is the
free $\F[V_1,...,V_{\text{grd}(D)-n},V]$-module generated by the set of grid states $S(D)$. This ring has one more 
variable $V$, compared to the ring we considered for $\widehat{GC}(D)$, associated to the special $O$-markings.
The differential $\partial^-$ is defined
as following:
$$\partial^-x=\sum_{y\in S(D)}\sum_{r\in\text{Rect}^{\circ}(x,y)}V_1^{O_1(r)}\cdot...\cdot V_m
^{O_m(r)}\cdot V^{O(r)}y\:\:\:\:\text{for any }x\in S(D)$$
where $m=\text{grd}(D)-n$ and $O(r)$ is the number of special $O$-markings in $r$.

It is clear from the definition that 
$$\left(\widehat{GC}(D),\widehat{\partial}\right)=\frac{\big(cGC^-(D),\partial^-\big)}{V=0}\:.$$
The collapsed filtered
unblocked homology $c\mathcal{GH}^-(L)$ is the homology of our new complex and it is a link invariant; but 
now each level $\mathcal F^s c\mathcal{GH}^-(L)$ has also a structure of an $\F[U]$-module given by $U[p]=[V_ip]=[Vp]$ for 
every $i=1,...,m$
and $[p]\in c\mathcal{GH}^-(L)$.
The groups $\mathcal F^s c\mathcal{GH}^-_d(L)$ are still finite dimensional over $\F$ and so we can define the 
function $N$
as $$N_L(d,s)=\text{dim}_{\F}\frac{\mathcal F^s c\mathcal{GH}^-_d(L)}{\mathcal F^{s-1}c\mathcal{GH}^-_d(L)}\:.$$
We expect the function $N$ to be a strong concordance invariant, possibly better than $T$.

We can compute the function $N$ of the unknot:
$$N_{\bigcirc}(d,s)=\left\{\begin{aligned}
                                 &1\:\:\:\:\:\text{if }(d,s)=(2t,t),\:\:\:t\leq 0 \\
                                 &0\hspace{1.5cm}\text{otherwise}
                                \end{aligned}\right.$$
and we know that
$$c\mathcal{GH}^-(L)\cong_{\F[U]}c\mathcal{GH}^-(\bigcirc_n)\cong_{\F[U]}\F[U]^{2^{n-1}}$$ as an 
$\F[U]$-module, where $n$ is the number of component of $L$.

Since $\text{dim}_{\F}c\mathcal{GH}^-_0(L)$ is still equal to 1, we can define an invariant $\nu$ exactly like we did in 
Subsection
\ref{subsection:definitions} for $\tau$. 
A version of the $\nu$-invariant has been introduced first by Jacob Rasmussen in \cite{Rasmussen}
and he proved that it is a concordance invariant for knots.
In \cite{Hom} Hom and Wu found knots whose $\nu$-invariant gives better lower bound for the slice genus than $\tau$.

Since $H_{*,*}\left(\text{gr}\left(cGC^-(D)\right)\right)$ is isomorphic to the homology $cGH^-(L)$ of \cite{Book}
and $N_L(0,\nu(L))=1$ for every diagram $D$ of $L$,
we have that $cGH^-_{0,\nu(L)}(L)$ is non-trivial. Hence, if the homology group $cGH^-_0(L)$ is non-zero only for one 
Alexander grading $s$, we can argue that $\nu(L)=s$. This method can be used to compute the $\nu$-invariant of some
links.

We can also define the (uncollapsed) filtered unblocked homology as the homology of the complex
$\big(GC^-(D),\partial^-\big)$, 
where $GC^-(D)$ is the free $\F[V_1,...,V_{\text{grd }(D)}]$-module over the grid states of $D$;
there are no special $O$-markings this time.

We see immediately that for every $n$-component link $L$ is
$$\mathcal{GH}^-(L)\cong\F[U_1,...,U_n]$$
with generator in Maslov grading 0, but the filtration will of course depend on $L$.

\subsection{Proof of Theorem \ref{teo:set}}
We use the complex $cGC^-$ to prove that, for every $n$-component link $L$, an integer $s$ gives $T_{d,s}(L)\neq0$ for some 
$d$ if and only if $s$ belongs to the $\tau$-set of $L$. 
To do this, given $\mathcal C$ a freely and finitely generated $\F[U]$-complex, we define two set of integers:
$\tau(\mathcal C)$ and $t(\mathcal C)$. First we call $\mathcal B_{\tau}(\mathcal C)$ a homogeneous,
free generating set of the torsion-free quotient
of $H_{*,*}\left(\text{gr}(\mathcal C)\right)$ as an $\F[U]$-module;
then $\tau(\mathcal C)$ is the set of $s\in\Z$ such that there is a $[p]\in\mathcal B_{\tau}(\mathcal C)$
with bigrading $(d,-s)$ for some $d\in\Z$.
Similarly,
$t(\mathcal C)$ is the set of the integers
$s$ such that the inclusion $$i_s:\mathcal F^{s-1}H_*\left(\frac{\mathcal{C}}{U=0}\right)
\lhook\joinrel\relbar\joinrel\rightarrow
\mathcal F^sH_*\left(\frac{\mathcal{C}}{U=0}\right)$$ is not surjective. Note that the set 
$\mathcal B_{\tau}(\mathcal C)$ is not 
unique, but $\tau(\mathcal C)$ and $t(\mathcal C)$ are well-defined.

We say that $s\in\tau(\mathcal C)$ has multiplicity $k$ if there are $k$ distinct elements in $\mathcal B_{\tau}(\mathcal C)$
with bigrading $(*,-s)$, while a number $u\in t(\mathcal C)$ has multiplicity $k$ if $\text{Coker }i_u$ has dimension $k$ as
an $\F$-vector space.

Clearly, if $D$ is a grid diagram of $L$,
$t\big(cGC^-(D)\big)$ is the set of the values of $s$
where the function $T_L$ is supported; moreover, we already remarked that 
$H_{*,*}\left(\text{gr}\left(cGC^-(D)\right)\right)$ is isomorphic to $cGH^-(L)$ and then
$\tau\big(cGC^-(D)\big)$ is the $\tau$-set of $L$. Hence our goal is to prove
that $t\big(cGC^-(D)\big)=\tau\big(cGC^-(D)\big)$, generalizing Proposition 14.1.2 in \cite{Book}.

Consider the complex $$\mathcal C=\frac{cGC^-(D)}{V_1=...=V_m=U}\:,$$ where $m=\text{grd}(D)-n$ and $U$ is the variable 
associated to the special $O$-markings. Then we define $\overline{\mathcal C}$ as the complex 
$\mathcal C\llbracket 1-n-m,-m\rrbracket$.

We introduce a new complex $\mathcal C'=\mathcal C\otimes_{\F[U]}\F[U,U^{-1}]$ and we define a $\Z\oplus\Z$-filtration on
$\mathcal C'$ in the following way: $\mathcal F^{x,*}\mathcal{C'}=U^{-x}\mathcal C$ for every $x\in\Z$, 
$\mathcal F^{*,y}\mathcal C'=\mathcal F^y\mathcal C'=\big\{p\in\mathcal C'\:|\:A(p)\leq y\big\}$ for every $y\in\Z$ and 
$\mathcal F^{x,y}\mathcal C'=\mathcal F^{x,*}\mathcal C'\cap\mathcal F^{*,y}\mathcal C'=U^{-x}\mathcal F^{y-x}\mathcal C$
for every $x,y\in\Z$.
The first step is to prove that $\tau(\mathcal C)=t(\overline{\mathcal C})$.

We have that $$\frac{\mathcal C}{U=0}\cong\widetilde{GC}(D)
\cong\frac{\mathcal F^{0,*}\mathcal C'}{\mathcal F^{-1,*}\mathcal C'}\:;$$ moreover,
for every integer $s$ we claim that 
$$\mathcal F^s\widetilde{GC}(D)\cong\frac{\mathcal F^{0,s}\mathcal C'}{\mathcal F^{-1,s}\mathcal C'}\:.$$
Since, from Lemma 14.1.9 in \cite{Book}, it is $\mathcal F^{x,y}\mathcal C'\cong\mathcal F^{y,x}\overline{\mathcal C'}$
for every $x,y\in\Z$, then we have that 
\begin{equation}
 \label{new}
 \mathcal F^s\frac{\overline{\mathcal C}}{U=0}\cong\mathcal F^s
 \widetilde{GC}(D)\llbracket1-n-m,-m\rrbracket
 \cong\frac{\mathcal F^{s,0}\mathcal C'}{\mathcal F^{s,-1}\mathcal C'}\:.
\end{equation}
Using this identification we obtain that $t(\overline{\mathcal C})$ coincides with set of $s\in\Z$ such that the map
\begin{equation}
 \label{tau_inc}
 H_*\left(\frac{\mathcal F^{s-1,0}\mathcal C'}{\mathcal F^{s-1,-1}\mathcal C'}\right)
 \lhook\joinrel\relbar\joinrel\rightarrow H_*\left(\frac{\mathcal F^{s,0}\mathcal C'}{\mathcal F^{s,-1}\mathcal C'}\right)
\end{equation}
is not surjective.

Now we consider the complex $\text{gr}(\mathcal C)$, which is equal to $\displaystyle\bigoplus_{t\in\Z}
\dfrac{\mathcal F^{0,t}\mathcal C'}{\mathcal F^{0,t-1}\mathcal C'}$. We have that the map
$$U^t:\frac{\mathcal F^{0,t}\mathcal C'}{\mathcal F^{0,t-1}\mathcal C'}\longrightarrow\frac{\mathcal F^{-t,0}\mathcal C'}
{\mathcal F^{-t,-1}\mathcal C'}$$ is an isomorphism and $\dfrac{\mathcal F^{-t,0}\mathcal C'}
{\mathcal F^{-t,-1}\mathcal C'}$ is a subspace of $\dfrac{\mathcal F^{*,0}\mathcal C'}
{\mathcal F^{*,-1}\mathcal C'}$ for every integer $t$. From Lemma 14.1.12 in \cite{Book} the latter filtered 
complex is isomorphic
to $\dfrac{\text{gr}(\mathcal C)}{U=1}$, but it is also isomorphic to  
$\widetilde{GC}(D)\llbracket1-n-m,-m\rrbracket$ for Equation \eqref{new}. In this way we can define 
a surjective map
$$\Psi:\text{gr}(\mathcal C)\longrightarrow\frac{\text{gr}(\mathcal C)}{U=1}$$ and it is easy to see that 
$\Psi\left(\mathcal B_{\tau}(\mathcal C)\right)$ is still a homogeneous, free generating set of the homology; furthermore,
if $[p]$ is a torsion element in $H_{*,*}(\text{gr}(\mathcal C))$ then $[\Psi(p)]=[0]$. This means that 
$\tau(\mathcal C)$ is the set of $-t\in\Z$ such that the map
\begin{equation}
 \label{t_inc}
 H_*\left(\frac{\mathcal F^{-t-1,0}\mathcal C'}{\mathcal F^{-t-1,-1}\mathcal C'}\right)
 \lhook\joinrel\relbar\joinrel\rightarrow H_*\left(\frac{\mathcal F^{-t,0}\mathcal C'}{\mathcal F^{-t,-1}\mathcal C'}\right)
\end{equation}
is not surjective. 

If we change $-t$ with $s$ in Equation \eqref{t_inc} then we immediately see that it coincides with Equation \eqref{tau_inc} 
and so $\tau(\mathcal C)=t(\overline{\mathcal C})$. Moreover, we can say that an integer in $\tau(\mathcal C)$ has
multiplicity $k$ if and only if it has multiplicity $k$ in $t(\overline{\mathcal C})$. Finally, since the map $U^t$ drops
the Maslov grading by $2t$, we have that if there is a $[p]\in\mathcal B_{\tau}(\mathcal C)$ with bigrading $(d,s)$ then 
there is a generator of $\widetilde{\mathcal{GH}}(D)\llbracket 1-n-m,-m\rrbracket$ with 
grading and minimal level $(d-2s,-s)$.

The second and final step is to show that the previous claim implies $t\big(cGC^-(D)\big)=\tau\big(cGC^-(D)\big)$.
From Lemma 14.1.11 in \cite{Book} we have the filtered
quasi-isomorphisms
$\mathcal C\cong cGC^-(D)\otimes W^{\otimes m}$
and $\overline{\mathcal C}\cong cGC^-(D)\otimes(W^*)^{\otimes m}$, where
$W$ is the two dimensional $\F$-vector space with generators in grading and minimal 
level $(d,s)=(0,0)$ and $(d,s)=(-1,-1)$.
Thus $t\big(cGC^-(D)\big)$ and $\tau\big(cGC^-(D)\big)$ completely determine $\tau(\mathcal C)$ and 
$t(\overline{\mathcal C})$, so this 
means that they coincide and the proof is complete. Obviously, the conclusions about multiplicities and Maslov shifts are 
still true.
 
From \cite{Book} Chapter 8
we know that there is only one element  $[p]\in\mathcal B_{\tau}\left(cGC^-(D)\right)$ with bigrading 
$(-2\tau_1,-\tau_1)$ and only another one $[q]$ with bigrading $(-2\tau_2+1-n,-\tau_2)$. Then the proof of Theorem 
\ref{teo:set} implies that there are two non-zero elements in $\widehat{\mathcal{GH}}(L)$ in grading and minimal level
$(0,\tau_1)$ and $(1-n,\tau_2)$. Since, from the definition of $\tau$ and $\tau^*$, we also know that there are only two 
generators of $\widehat{\mathcal{GH}}(L)$ in Maslov grading $0$ and $1-n$; we have the following corollary.
\begin{cor}
 \label{cor:twice}
 Take a grid diagram $D$ of an $n$-component link $L$ and consider a set $\mathcal B_{\tau}\left(cGC^-(D)\right)$. 
 If $[p],[q]\in\mathcal B_{\tau}\left(cGC^-(D)\right)$ are such that $[p]$ is in bigrading $(-2\tau_1,-\tau_1)$ and 
 $[q]$ is in bigrading $(-2\tau_2+1-n,-\tau_2)$ then $\tau(L)=\tau_1$ and $\tau^*(L)=\tau_2$.
\end{cor}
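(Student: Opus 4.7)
The plan is to use the correspondence, established inside the proof of Theorem~\ref{teo:set}, between homogeneous free generators of the $\tau$-set and distinguished nontrivial classes in the filtered hat theory, and then to invoke the uniqueness of generators of $\widehat{\mathcal{GH}}(L)$ at the extreme Maslov gradings $0$ and $1-n$. The corollary is morally a dictionary entry, so the proof will consist of three short steps.

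First, I would record the following consequence of the proof of Theorem~\ref{teo:set}: a homogeneous free generator of $H_{*,*}\left(\text{gr}\left(cGC^-(D)\right)\right) \cong cGH^-(L)$ in bigrading $(d,s)$ gives rise to a nontrivial class in $\widehat{\mathcal{GH}}(L)$ whose Maslov grading is $d-2s$ and whose minimal filtration level is $-s$. This is exactly the assertion obtained there by combining the map $\Psi$ with the filtered quasi-isomorphism $\mathcal C\cong cGC^-(D)\otimes W^{\otimes m}$; the shift $\llbracket 1-n-m,-m\rrbracket$ used in the intermediate step is precisely absorbed by the tensor factor $W^{\otimes m}$, so no residual grading correction appears once we pass to $cGC^-(D)$.

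Second, I feed the hypothesized elements into this dictionary. From $[p]$ in bigrading $(-2\tau_1,-\tau_1)$ I get a nontrivial class in $\widehat{\mathcal{GH}}(L)$ at Maslov grading $-2\tau_1-2(-\tau_1)=0$ with minimal level $\tau_1$, so $T_L(0,\tau_1)\neq 0$. Analogously, $[q]$ in bigrading $(-2\tau_2+1-n,-\tau_2)$ yields a nontrivial class in $\widehat{\mathcal{GH}}(L)$ at Maslov grading $1-n$ with minimal level $\tau_2$, so $T_L(1-n,\tau_2)\neq 0$.

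Finally, I invoke the structural computations from Subsection~\ref{subsection:definitions}, namely $\dim_{\F}\widehat{\mathcal{GH}}_0(L)=1$, together with Equation~\eqref{mirror}, which forces $\dim_{\F}\widehat{\mathcal{GH}}_{1-n}(L)=1$. Because $\tau(L)$ and $\tau^*(L)$ are by definition the unique filtration levels at which $T_L(0,\cdot)$ and $T_L(1-n,\cdot)$ are nonzero, the nontriviality statements of the previous paragraph force $\tau_1=\tau(L)$ and $\tau_2=\tau^*(L)$. I do not anticipate any real obstacle here beyond careful bookkeeping of the Maslov and Alexander shifts under the identifications $\mathcal C\cong cGC^-(D)\otimes W^{\otimes m}$ and $\overline{\mathcal C}\cong cGC^-(D)\otimes(W^*)^{\otimes m}$; the analytic content has already been absorbed into Theorem~\ref{teo:set}.
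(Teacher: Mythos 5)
Your proposal is correct and follows the same route as the paper: the paper likewise extracts from the proof of Theorem~\ref{teo:set} the dictionary sending $[p]\in\mathcal B_{\tau}\big(cGC^-(D)\big)$ in bigrading $(d,s)$ to a class of Maslov grading $d-2s$ and minimal level $-s$ in $\widehat{\mathcal{GH}}(L)$, plugs in $(-2\tau_1,-\tau_1)$ and $(-2\tau_2+1-n,-\tau_2)$ to land at $(0,\tau_1)$ and $(1-n,\tau_2)$, and then uses one-dimensionality of $\widehat{\mathcal{GH}}_0(L)$ and $\widehat{\mathcal{GH}}_{1-n}(L)$ together with the definitions of $\tau$ and $\tau^*$ to conclude. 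Your appeal to Equation~\eqref{mirror} for the dimension of $\widehat{\mathcal{GH}}_{1-n}(L)$ is equivalent to the paper's shortcut ``from the definition of $\tau^*$'', so there is no substantive difference.
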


\section{Applications}
\label{section:application}
\subsection{Computation for some specific links}
In general it is hard to say when a sum of grid states is a generator of the homology, but the following lemma 
provides an example where we have useful information.
\begin{lemma}
 \label{lemma:generator}
 Suppose $L$ is an $n$-component link with grid diagram $D$ and $x\in S(D)$ as in Figure \ref{1}.
 Then $[x]$ is always the generator of $\widehat{\mathcal{GH}}_0(L)$ and
 $c\mathcal{GH}^-_0(L)$. Furthermore, $\tau(L)=\nu(L)=A(x)$.
\end{lemma}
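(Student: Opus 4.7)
The plan is to verify four properties of $x$: that it is a cycle in both $\widehat{GC}(D)$ and $cGC^-(D)$, that it has Maslov grading $0$, that its homology class is non-zero in $\widehat{\mathcal{GH}}_0(L)$ and in $c\mathcal{GH}^-_0(L)$, and that its Alexander grading achieves the minimum filtration level of the corresponding generator. The overall strategy rests on the facts, established in Sections \ref{section:invariant} and \ref{section:minus}, that $\widehat{\mathcal{GH}}_0(L)\cong_{\F}\F$ and $c\mathcal{GH}^-_0(L)\cong_{\F}\F$: once $[x]$ is shown to be non-zero and to sit in the correct Maslov grading, it is automatically the generator.

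First I would inspect the empty rectangles $r\in\text{Rect}^{\circ}(x,y)$ emanating from $x$. The configuration depicted in Figure \ref{1} should be arranged so that every such rectangle contains at least one special $O$-marking; from the definition of $\widehat{\partial}$ this gives $\widehat{\partial}x=0$ immediately. For the collapsed unblocked differential $\partial^-$ the same rectangles contribute terms carrying a positive power of the variable $V$, and a direct combinatorial check on the finite list of rectangles available from $x$ shows that these terms cancel. The equation $M(x)=0$ is then an application of formula \eqref{maslov}, by counting the pairs in $\mathcal J(x-\OO,x-\OO)$ using the specific positions of the points of $x$ prescribed by the figure.

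Next, to show $[x]\neq 0$ in both homology groups, I would pass to the associated graded and check that the class of $x$ is non-zero in $\widehat{GH}_{0,A(x)}(L)\cong H_{0,A(x)}\bigl(\text{gr}\bigl(\widehat{GC}(D)\bigr)\bigr)$. The graded differential $\text{gr}(\widehat{\partial})$ counts only those empty rectangles that contain neither an $X$-marking nor a special $O$-marking, and the configuration of $x$ prevents any such rectangle from starting at $x$, so $x$ is a graded cycle. A similar examination of the grid states of Maslov grading $1$ and Alexander grading $A(x)$ rules out $x$ being a graded boundary. The parallel argument for $cGC^-(D)$ yields $[x]\neq 0$ in $c\mathcal{GH}^-_0(L)$.

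Finally, since $x$ itself has Alexander grading $A(x)$, we have $[x]\in\mathcal F^{A(x)}\widehat{\mathcal{GH}}_0(L)$, giving $\tau(L)\leq A(x)$. The reverse inequality $\tau(L)\geq A(x)$ is equivalent to the assertion that no cycle homologous to $x$ has Alexander grading strictly less than $A(x)$, which is precisely the non-vanishing in the associated graded established in the previous step. The same argument in $cGC^-$, or alternatively an appeal to Corollary \ref{cor:twice} applied to $[x]$, yields $\nu(L)=A(x)$. The main obstacle is the non-vanishing in the associated graded: ruling out a grid state of Maslov grading $1$ and Alexander grading $A(x)$ whose graded boundary contains $x$ is the step that actually uses the detailed combinatorics visible in Figure \ref{1}, and is where the combinatorial content of the lemma lies.
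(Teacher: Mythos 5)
Your outline (cycle, Maslov grading $0$, non-vanishing, read off the filtration level) is the right shape, but two specifics diverge from what actually makes the lemma work, and one of them is a genuine gap.

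First, the mechanism for $\widehat\partial x=\partial^-x=0$ is not the one you propose. You guess that each empty rectangle out of $x$ hits a special $O$-marking; in fact, for the diagonal configuration of Figure~\ref{1} the two rectangles in each $\text{Rect}^{\circ}(x,y)$ contain \emph{no} $O$-markings at all, so both terms contribute $y$ with trivial $V$-monomial and they cancel over $\F=\Z/2\Z$. (Your ``special $O$'' claim is easily seen to fail already on the $2\times2$ grid, where both rectangles at $x$ avoid the two $O$'s.) This distinction is not cosmetic: your version would make $\widehat\partial x=0$ for the wrong reason and would leave $\partial^-x=0$, which you defer to an unspecified ``direct combinatorial check,'' unjustified.

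Second, and more importantly, you never isolate the fact that carries the whole lemma: the diagonal placement of the $O$'s forces $M(y)\leq 0$ for \emph{every} grid state $y$. The paper proves this by induction on $\text{grd}(D)$. Once this bound is in hand, everything else is immediate: the entire complex $\widehat{GC}(D)$ (and $cGC^-(D)$) is supported in Maslov gradings $\leq 0$, so there is literally nothing in Maslov grading $1$ and $x$ cannot be a boundary; moreover, since there is also nothing in Maslov grading $1$ for $\widehat\partial$ to map from, the cycle $x$ is the \emph{unique} representative of its homology class, so $\tau(L)=\nu(L)=A(x)$ falls out with no further work. Your substitute --- pass to the associated graded, argue $x$ is not a graded boundary, and deduce $\tau(L)\geq A(x)$ from non-vanishing in $\widehat{GH}_{0,A(x)}$ --- is both harder to execute (you would have to actually enumerate and kill the Maslov-$1$, Alexander-$A(x)$ states, the very work the Maslov bound replaces) and logically incomplete: a nonzero class on the $E_1$ page of the spectral sequence does not by itself control the filtration level of the generator of $\widehat{\mathcal{GH}}_0(L)$ on the $E_\infty$ page. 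The combinatorial content you correctly sense must be ``hiding in the figure'' is precisely the inequality $M(y)\leq 0$, and without naming and proving it the proposal does not close.
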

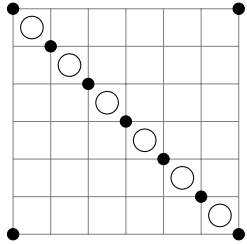
\begin{figure}[ht]
  \begin{center}
   \begin{tikzpicture}[scale=0.5]
    \draw[help lines] (0,0) grid (6,6); 
    \draw[fill] (0,0) circle [radius=0.15]; \draw (0.5,5.5) circle [radius=0.3];
    \draw[fill] (0,6) circle [radius=0.15]; \draw (1.5,4.5) circle [radius=0.3];
    \draw[fill] (1,5) circle [radius=0.15]; \draw (2.5,3.5) circle [radius=0.3];
    \draw[fill] (2,4) circle [radius=0.15]; \draw (3.5,2.5) circle [radius=0.3];
    \draw[fill] (3,3) circle [radius=0.15]; \draw (4.5,1.5) circle [radius=0.3];
    \draw[fill] (4,2) circle [radius=0.15]; \draw (5.5,0.5) circle [radius=0.3];
    \draw[fill] (5,1) circle [radius=0.15];
    \draw[fill] (6,0) circle [radius=0.15];
    \draw[fill] (6,6) circle [radius=0.15];
   \end{tikzpicture}
  \end{center}
  \caption{We denote with $x$ the grid state in the picture}
  \label{1}
 \end{figure}
\begin{proof}
 We show that $M(x)=0$, $\widehat{\partial}x=\partial^-x=0$ and that for every other grid state $y$ of $D$ it is
 $M(y)\leq 0$.
 \begin{enumerate}[i)]
  \item The fact that $M(x)=0$ is trivial.
  \item For every $y\in S(D)$ there are always 2 
        rectangles in $\text{Rect}^{\circ}(x,y)$ and they contain no $O$, so they cancel when we 
        compute the differential.
  \item We prove by induction on $\text{grd}(D)$ that $M(y)\leq 0$ for every $y\in S(D)$.   
        
        If $\text{grd}(D)=1$ then $x$ is the only grid state.
        
        If $\text{grd}(D)=2$ then there are only $x$ and $y$ and it is $M(y)=-1$.
        
        Suppose the claim is true for the diagrams with dimension equal or smaller than $\alpha$ and let 
        $\text{grd}(D)=\alpha+1$.
  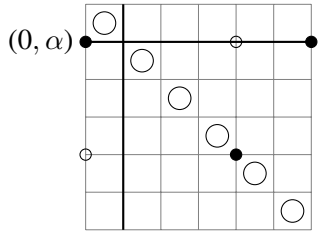
\begin{figure}[ht]      
  \begin{center}
   \begin{tikzpicture}[scale=0.5]
    \draw[help lines] (0,0) grid (6,6); 
    \node[left] at (0,5) {$(0,\alpha)$};
    \draw[thick] (1,0)--(1,6); \draw[thick] (0,5)--(6,5);
    \draw[fill] (0,5) circle [radius=0.15]; \draw (0,2) circle [radius=0.15]; 
    \draw[fill] (6,5) circle [radius=0.15]; \draw (4,5) circle [radius=0.15]; 
    \draw[fill] (4,2) circle [radius=0.15]; 
    \draw (0.5,5.5) circle [radius=0.3];
    \draw (1.5,4.5) circle [radius=0.3];
    \draw (2.5,3.5) circle [radius=0.3];
    \draw (3.5,2.5) circle [radius=0.3];
    \draw (4.5,1.5) circle [radius=0.3];
    \draw (5.5,0.5) circle [radius=0.3];    
   \end{tikzpicture}
  \end{center}
  \caption{The state $y\in I(D)$ is marked with the black circles}
  \label{2}
 \end{figure}
 We denote with $I(D)\subset S(D)$ the subset of grid states that contain the point $(0,\alpha)$ as in Figure \ref{2}.
 Every $y\in I(D)$ is the extension of a grid state $y'$ of the diagram $D'$ obtained by removing the first column and the last
 row from $D$. By the inductive hypothesis we have $M(y)=-1+M(y')\leq-1$.
 
 Now it easy to see that every other $z\in S(D)$ is obtained by a rectangle move from a $y\in I(D)$. Then, if $r$ is the 
 rectangle, we have
 $$M(z)-M(y)=1-2\cdot\#|r\cap\OO|+2\cdot\#|\text{Int}(r)\cap y|\:,$$
 but $\#|\text{Int}(r)\cap y|\leq\text{min}\{\pi_1(r),\pi_2(r)\}=\#|r\cap\OO|$ where $\pi_i(r)$ is the lenght of the edges
 of $r$.
 Hence $M(z)\leq 0$.
 \end{enumerate}
\end{proof}
In Lemma \ref{lemma:generator} we used that the grid diagram $D$ has all the $O$-markings aligned on a diagonal.
It is easy to see that if a link admits such a diagram then it is positive. On the other hand, it seems difficult for the
converse to be true.

\subsection{Torus links}
\label{subsection:torus}
We compute the $\tau$-invariant of every torus link.
Consider the grid diagram $D_{q,p}$ in Figure \ref{3},
representing the torus link $T_{q,p}$ with $q\leq p$ and all the components oriented
in the same direction.
\begin{figure}[ht]
  \begin{center}
   \begin{tikzpicture}[scale=0.5]
    \draw[help lines] (0,0) grid (7,7); 
    \draw[fill] (0,0) circle [radius=0.15]; \draw (0.5,6.5) circle [radius=0.3];
                                            \draw (0.5,6.5) circle [radius=0.2];
    \draw[fill] (0,7) circle [radius=0.15]; \draw (1.5,5.5) circle [radius=0.3];
    \draw[fill] (1,6) circle [radius=0.15]; \draw (2.5,4.5) circle [radius=0.3];
    \draw[fill] (2,5) circle [radius=0.15]; \draw (3.5,3.5) circle [radius=0.3];
    \draw[fill] (3,4) circle [radius=0.15]; \draw (4.5,2.5) circle [radius=0.3];
    \draw[fill] (4,3) circle [radius=0.15]; \draw (5.5,1.5) circle [radius=0.3];
    \draw[fill] (5,2) circle [radius=0.15]; \draw (6.5,0.5) circle [radius=0.3];
    \draw[fill] (6,1) circle [radius=0.15];
    \draw[fill] (7,0) circle [radius=0.15];
    \draw[fill] (7,7) circle [radius=0.15];
    \node at (0.5,2.5) {X}; 
    \node at (1.5,1.5) {X};
    \node at (2.5,0.5) {X};
    \node at (3.5,6.5) {X};
    \node at (4.5,5.5) {X};
    \node at (5.5,4.5) {X};
    \node at (6.5,3.5) {X};
    \draw[|-|] (7.5,0)--(7.5,3); \draw[-|] (7.5,3)--(7.5,7);
    \node[right] at (8,1.5) {$q$}; \node[right] at (8,5) {$p$};
   \end{tikzpicture}
  \end{center}
  \caption{$x$ is the grid state in the picture}
  \label{3}
\end{figure}
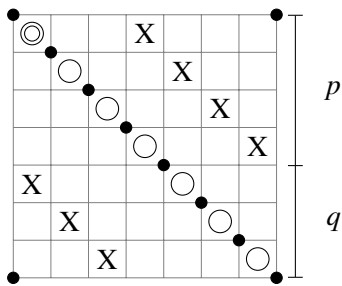
By Lemma \ref{lemma:generator} we know that $[x]$ is the only generator of $\widehat{\mathcal{GH}}_0(T_{q,p})$. If we 
denote with $n$ the number of components of $T_{q,p}$ then a simple 
computation gives
$$\begin{aligned}
   A(x)&=\frac{1}{2}\left(M(x)-M_{\X}(x)-\text{grd}(D_{q,p})+n\right)=\frac{1}{2}(-M_{\X}(x)-p-q+n)=\\
   &=\frac{1}{2}\left[2\sum_{i=1}^{q-1}i+q(p-q+1)-p-q+n\right]=\frac{1}{2}[q(q-1)+q(p-q+1)-p-q+n]=\\
   &=\frac{1}{2}(pq-p-q+n)=\frac{(p-1)(q-1)-1+n}{2}\:.
  \end{aligned}$$
Now we use Lemma \ref{lemma:generator} again and obtain that
\begin{equation*}
 \tau(T_{q,p})=\frac{(p-1)(q-1)-1+n}{2}\:.
\end{equation*}
Using the lower bound of Equation
\eqref{bound} gives a different way to compute the slice genus of a torus link respect to what we did
in \cite{Cavallo}:
$$g_4(T_{q,p})\geq\frac{(p-1)(q-1)}{2}-\frac{1-n}{2}+1-n=\frac{(p-1)(q-1)+1-n}{2}\:.$$
Since the Seifert algorithm applied to the standard diagram of $T_{q,p}$ gives the opposite inequality, we conclude
that $$g_4(T_{q,p})=\frac{(p-1)(q-1)+1-n}{2}\:\:\:\:\:\text{for any }q\leq p\:.$$ 

\subsection{Applications to Legendrian invariants}
We equip $S^3$ with its unique tight contact structure $\xi_{\text{st}}$, whose definition 
can be found in \cite{Geiges} Chapter 2.
We want to prove that Equation \eqref{tb_bound} holds in this case:
$$\text{tb}(\mathcal L)+|\text{rot}(\mathcal L)|\leq2\tau(L)-n\:.$$
We remark that, if $\mathcal D=\mathcal D_1\cup...\cup\mathcal D_n$
is a front projection of the Legendrian link $\mathcal L$ in the standard contact 3-sphere,
the Thurston-Bennequin and rotation number
of $\mathcal L$ are given by $$\text{tb}(\mathcal L)=\displaystyle\sum_{i=1}^n\text{tb}_i(\mathcal L)\:\:\:\:\:
\text{and}\:\:\:\:\:\text{rot}(\mathcal L)=\displaystyle\sum_{i=1}^n\text{rot}_i(\mathcal L)$$ where
$$\text{tb}_i(\mathcal L)=w(\mathcal D_i)+\text{lk}\left(\mathcal D_i,\mathcal D\setminus\mathcal D_i\right)-\frac{1}{2}
\#|\text{cusps in }\mathcal D_i|$$ and
$$\text{rot}_i(\mathcal L)=\frac{1}{2}\bigg(\#|\text{downward cusps in }\mathcal D_i|-\#|\text{upward cusps in } \mathcal D_i|
\bigg)\:;$$ here we denote with $w$ the writhe of a link diagram. We could simply say that $$\text{tb}(\mathcal L)=
w(\mathcal D)-\frac{1}{2}\#|\text{cusps in }\mathcal D|$$ and 
$$\text{rot}(\mathcal L)=\frac{1}{2}\bigg(\#|\text{downward cusps in }\mathcal D|
-\#|\text{upward cusps in }\mathcal D|\bigg)\:,$$
but we need the previous definition in the following proof.
\begin{proof}[Proof of Proposition \ref{prop:three}]
 If $\mathcal L$ is a Legendrian link then, from Chapter 12 in \cite{Book}, we know that $\mathcal L$ can be represented by
 a grid diagram $D$ of the link $L^*$ (the mirror of $L$). This diagram $D$ is such that 
 $$\frac{\text{tb}_i(\mathcal L)-\text{rot}_i(\mathcal L)+1}{2}=A_i(x^+)\:\:\:\:\:
 \frac{\text{tb}_i(\mathcal L)+\text{rot}_i(\mathcal L)+1}{2}=A_i(x^-)$$
 $$\text{tb}(\mathcal L)-\text{rot}(\mathcal L)+1=M(x^+)\:\:\:\:\:
 \text{tb}(\mathcal L)+\text{rot}(\mathcal L)+1=M(x^-)\:,$$
 where $x^{\pm}$ are the grid states in $D$ obtained by taking a point in the
 northeast (southwest for $x^-$) corner of every square decorated with an $X\in\X$. Moreover, $A_i$ is defined as follows:
 $$A_i(x)=\mathcal J\bigg(x-\frac{1}{2}(\X+\OO),(\X_i-\OO_i)\bigg)-\frac{\text{grd}(D)_i-1}{2}\:\:\:\:\:\text{for any }
 x\in S(D)$$
 with $\OO_i\subset\OO$ and $\X_i\subset\X$ the markings on the $i$-th component of $L^*$ and $\text{grd}(D)_i$ 
 the number of elements in $\OO_i$.

 From Lemma 8.4.7 and Theorems 12.3.2 and
 12.7.5 in \cite{Book} we have that $x^{\pm}$ represent non-torsion elements in the homology group $cGH^-(L^*)$;
 in fact these classes are the Legendrian grid invariants $\lambda^{\pm}(\mathcal L)$.
 Since $A(x)=\displaystyle\sum_{i=1}^nA_i(x)$ for every grid state $x$, we have that 
 $M\left(x^{\pm}\right)=2A\left(x^{\pm}\right)+1-n$. Therefore, Corollary \ref{cor:twice}
 implies that $A\left(x^{\pm}\right)\leq-\tau^*(L^*)$. Combining the latter claim with Corollary \ref{cor:mirror}, that 
 gives $\tau^*(L^*)=-\tau(L)$, we have 
 $$\frac{\text{tb}(\mathcal L)\mp\text{rot}(\mathcal L)+n}{2}=A(x^{\pm})\leq\tau(L)$$
 that is precisely Equation \eqref{tb_bound}.
\end{proof}
From Equation \eqref{tb_bound}, together with Equation \eqref{bound}, we obtain the lower bound for the slice genus of 
Equation \eqref{tb_slice}:
$$\text{tb}(\mathcal L)+|\text{rot}(\mathcal L)|\leq2g_4(L)+n-2\:.$$
This bound is sharp for positive torus links, but here we show that there are 
other links for which this happens. 

In Figure \ref{L9n19} we have a front projection $\mathcal D$ of a Legendrian two 
component link $\mathcal L$. The link type of $\mathcal L$ is the link $L9^n_{19}$.
A simple computation gives $\text{tb}(\mathcal L)=6$ and $\text{rot}(\mathcal L)=0$, therefore Equation \eqref{tb_slice}
says that $g_4(L9^n_{19})\geq 3$. Since it is easy to see that the link represented by $\mathcal D$ can be 
unlinked by changing the four crossings highlighted in the picture, we have $g_4(L9^n_{19})\leq
u(L9^n_{19})-1\leq 3$ and then we conclude that $g_4(L9^n_{19})=3$.
\begin{figure}[ht]
        \centering
        \def\svgwidth{12cm}
        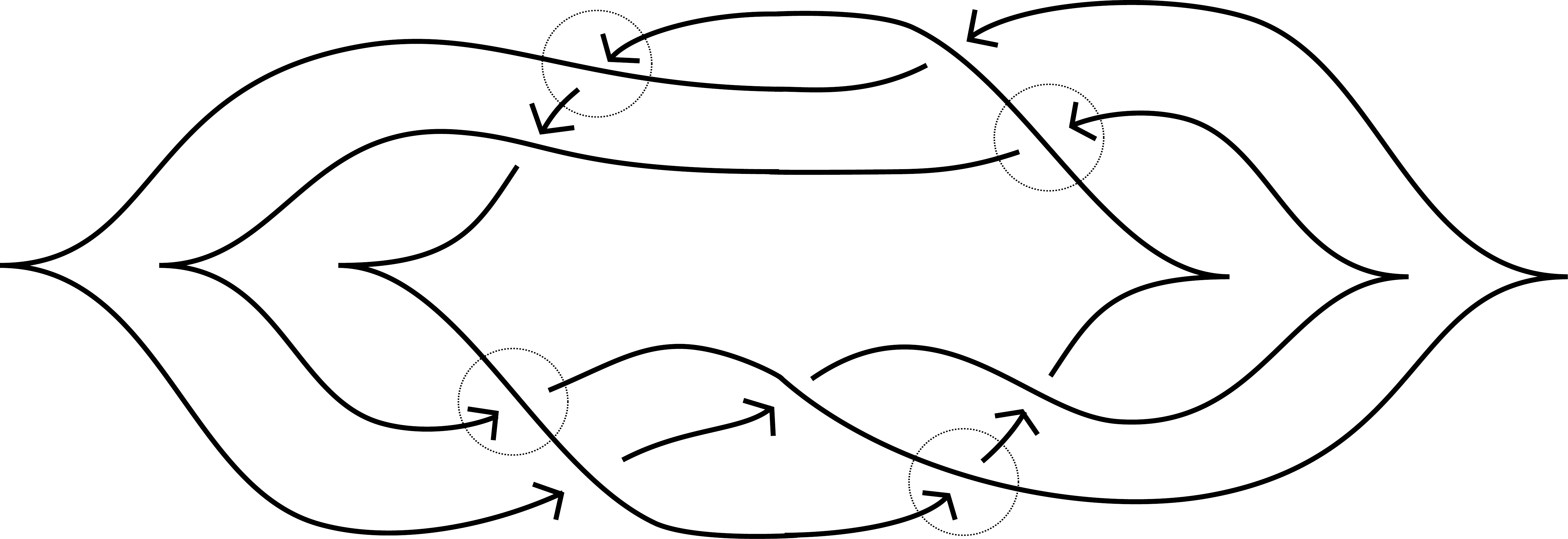   
        \caption{A diagram of the link $L9^n_{19}$}
        \label{L9n19}
\end{figure}

From Equation \eqref{tb_bound} we also have the upper bound for the maximal Thurston-Bennequin number of 
Proposition \ref{prop:max}:
$$\text{TB}(L)\leq 2\tau(L)-n$$
and its refinement for quasi-alternating links given by Corollary \ref{cor:tb}.
Although this bound is much less powerful than the Kauffmann or the HOMFLY polynomial, we can still get some interesting
conclusions. 

Consider a Legendrian link $\mathcal L$ such that each component $\mathcal L_i$ is algebraically unknotted.
Then $\text{tb}_i(\mathcal L)=\text{tb}(\mathcal L_i)$ and so $\text{tb}(\mathcal L)$ is precisely the sum of the 
Thurston-Bennequin numbers of its components.
For example this happens for the Borromean link $B$, whose components $B_i$ are three 
(algebraically unknotted) unknots. It was shown in \cite{Mohnke} that there
is no Legendrian representation of $B$, 
where the Thurston-Bennequin number of each component is -1; in fact we have $\text{TB}(B)=-4$, while 
$\text{TB}(\bigcirc)=-1$. In particular, this means  
that the difference between $\text{TB}(B)$ and the sum of $\text{TB}(B_i)$ is -1.

We prove Proposition \ref{prop:last}, where we give a family of two components links $L^k$ such that
the components of $L^k$ are two unknots with $\text{lk}(L^k_1,L^k_2)=0$ and
the difference between $\text{TB}(L^k)$ and the sum of $\text{TB}(L^k_i)$ is actually arbitrarily small,
improving the latter result for $B$.
The links $L^k$ are shown in Figure \ref{L9a40}.
\begin{proof}[Proof of Proposition \ref{prop:last}]
Since for every $k\geq 0$ the link $L^k$ is non-split alternating, we can easily compute the signature that is equal to
$3+2k$. Now we apply Corollary \ref{cor:tb} and we obtain that $\text{TB}(L^k)\leq -4-2k$.
\end{proof}


\begin{thebibliography}{9}
 \bibitem{Cavallo} A. Cavallo, \emph{On the slice genus and some concordance invariants of links}, 
             J. Knot Theory Ramifications, \textbf{24} (2015), no. 4, 1550021.
 \bibitem{Geiges} H. Geiges, \emph{An introduction to contact topology},
             Cambridge University Press, volume 109 of Cambridge Studies in Advanced Mathematics, 2008.
 \bibitem{Hom} J. Hom and Z. Wu, \emph{Four-ball genus bounds and a refinement of the Ozsv\'ath-Szab\'o $\tau$-invariant},
             J. Symplectic Geom., \textbf{14} (2016), no. 1, pp. 305-323. 
 \bibitem{Kawauchi} A. Kawauchi, T. Shibuya and S. Suzuki, \emph{Descriptions on surfaces in four-space. I. Normal forms},
             Math. Sem. Notes Kobe Univ., \textbf{10} (1982), no. 1, pp. 75-125.
 \bibitem{Manolescu1} C. Manolescu, P. Ozsv\'ath, Z. Szab\'o and D. Thurston, \emph{On combinatorial link Floer homology},
             Geom. Topol., \textbf{11} (2007), pp. 2339-2412.
 \bibitem{Mohnke} K. Mohnke, \emph{Legendrian links of topological unknots},
             AMS, volume 279 of Contemporary Mathematics, 2001, pp. 209-211.
 \bibitem{Book} P. Ozsv\'ath, A. Stipsicz and Z. Szab\'o, \emph{Grid homology for knots and links},
             AMS, volume 208 of Mathematical Surveys and Monographs, 2015.
 \bibitem{Ozsvath} P. Ozsv\'ath and Z. Szab\'o, \emph{Holomorphic disks and knot invariants},
             Adv. Math., \textbf{186} (2004), no. 1, pp. 58-116. 
 \bibitem{Ozsvath1} P. Ozsv\'ath and Z. Szab\'o, \emph{On the Heegaard Floer homology of branched double-covers},
             Adv. Math., \textbf{194} (2005), no. 1, pp. 1-33.          
 \bibitem{Pardon} J. Pardon, \emph{The link concordance invariant from Lee homology}, 
              Algebr. Geom. Topol., \textbf{12} (2012), no. 2, pp. 1081-1098.            
 \bibitem{Plamenevskaya} O. Plamenevskaya, \emph{Bounds for the Thurston-Bennequin number from Floer homology},
             Algebr. Geom. Topol., \textbf{4} (2004), pp. 399-406.
 \bibitem{Rasmussen} J. Rasmussen, \emph{Floer homology and knot complements},
             PhD thesis, Harvard University, 2003.
 \bibitem{Rudolph} L. Rudolph, \emph{An obstruction to sliceness via contact geometry and ``classical'' gauge theory},
             Invent. Math., \textbf{119} (1995), no. 1, pp. 155-163.          
 \bibitem{Sarkar} S. Sarkar, \emph{Grid diagrams and the Ozsv\'ath-Szab\'o tau-invariant},
             Math. Res. Let., \textbf{18} (2011), no. 6, pp. 1239-1257.
\end{thebibliography}
\end{document}